\setlist{parsep=0pt,listparindent=\parindent}
\newtheorem{thm}{Theorem}[section]
\newtheorem{lem}[thm]{Lemma}
\theoremstyle{definition}
\numberwithin{equation}{section} 
\numberwithin{figure}{section}
\numberwithin{table}{section}
\newcommand{\E}{\mathop{{}\mathbb{E}}}
\newcommand{\R}{\mathbb{R}}
\newcommand{\Z}{\mathbb{Z}}
\newcommand{\C}{\mathbb{C}}
\newcommand{\BH}{\mathrm{BH}}
\newcommand{\un}{\mathbf{I}}
\newcommand{\ketbra}[2]{\ket{#1}\!\!\bra{#2}}
\newcommand{\renorm}{\Gamma\!}
\newcommand{\bA}{\mathbf{A}}
\newcommand{\bB}{\mathbf{B}}
\newcommand{\bC}{\mathbf{C}}
\newcommand{\eps}{\varepsilon}
\newcommand{\al}{\alpha}
\newcommand{\cA}{\mathcal{A}}
\newcommand{\cS}{\mathcal{S}}
\newcommand{\cW}{\mathcal{W}}
\newcommand{\Om}{\Omega}
\newcommand{\om}{\omega}
\newcommand{\go}{K}
\newcommand{\tr}{\textnormal{tr}}
\newtheorem{theorem}{Theorem}
\newtheorem{proposition}[theorem]{Proposition}
\newtheorem{lemma}[theorem]{Lemma}
\newtheorem{corollary}[theorem]{Corollary}
\theoremstyle{definition}
\newtheorem{definition}[theorem]{Definition}
\newtheorem{remark}[theorem]{Remark}
\newtheorem*{question*}{Question}
\DeclareMathSymbol{\shortminus}{\mathbin}{AMSa}{"39}
\begin{document}

\title[Qudit  Bohnenblust--Hille inequality ]
{Noncommutative Bohnenblust--Hille Inequality for qudit systems}

\author{Joseph Slote}
\address{(J.S.) Department of Computing \& Mathematical Sciences,  California Institute of Technology, Pasadena, CA 91125}
\email{jslote@caltech.edu}

\author{Alexander Volberg}
\address{(A.V.) Department of Mathematics, MSU, 
East Lansing, MI 48823, USA and Hausdorff Center of Mathematics}
\email{volberg@math.msu.edu}

\author{Haonan Zhang}
\address{(H.Z.) Department of Mathematics, University of South Carolina, Columbia, SC, 29201}
\email{haonanzhangmath@gmail.com}

\begin{abstract}
Previous noncommutative Bohnenblust--Hille (BH) inequalities addressed operator decompositions in the tensor-product space $M_2(\C)^{\otimes n}$; \emph{i.e.,} for systems of qubits \cite{HCP22,VZ23}.
Here we prove noncommutative BH inequalities for operators decomposed in tensor-product spaces of arbitrary local dimension, \emph{i.e.,} $M_K(\C)^{\otimes n}$ for any $K\geq2$ or on systems of $K$-level qudits.
We treat operator decompositions in both the Gell-Mann and Heisenberg--Weyl basis, reducing to the recently-proved commutative hypercube BH \cite{DMP} and cyclic group BH \cite{SVZ} inequalities respectively.
As an application we discuss learning qudit quantum observables.
\end{abstract}

\subjclass[2020]{06E30, 47A30, 43A25}

\keywords{Bohnenblust--Hille inequality, qudit systems, low-degree learning}

\thanks{
J.S. is supported by Chris Umans' Simons Foundation Investigator Grant.
The research of A.V. is supported by NSF DMS-1900286, DMS-2154402 and by Hausdorff Center for Mathematics.
H.Z. was supported by the Lise Meitner fellowship, Austrian Science Fund (FWF) M3337.
This work is partially supported by NSF  DMS-1929284 while all three authors were in residence at the Institute for Computational and Experimental Research in Mathematics in Providence, RI, during the Harmonic Analysis and Convexity program. Part of the work was finished when all the authors were visiting Stanford University, and we would like to thank their kind hospitality. }

\maketitle

\section{Quantum Bohnenblust--Hille inequalities}
\label{sec:BHineq}
Let 
\[
f(z) = \sum_\al c_\al z^\al=\sum_\al c_\al z^{\al_1}_1 \cdots z_n^{\al_n},
\]
where $\al=(\al_1, \dots, \al_n)$ are vectors of non-negative integers, all $c_\alpha$ are nonzero, and the total degree of polynomial $f$ is $d=\max_\al (\al_1+\dots+\al_n)$.
Here  $z$ can be all complex vectors in $\mathbb{T}^n=\{\zeta\in \C:|\zeta|=1\}^n$ or all sequences of $\pm1$ in Boolean cube $\{-1,1\}^n$. Bohnenblust--Hille-type of inequalities are the following
\begin{equation}
	\label{BHcom}
	\Big(\sum_\al |c_\al|^{\frac{2d}{d+1}}\Big)^{\frac{d+1}{2d}} \le C(d) \sup_{z}|f(z)|\,.
\end{equation}
The supremum is taken either over the torus $\mathbb{T}^n$ or, more recently, the Boolean cube $\{-1,1\}^n$.
In both cases this inequality is proven with constant $C(d)$ that is independent of the dimension $n$ and sub-exponential in the degree $d$.
More precisely, denote by $\textnormal{BH}^{\le d}_{\mathbb{T}}$ and $\textnormal{BH}^{\le d}_{\{\pm 1\}}$ the best constants in the Bohnenblust--Hille (BH) inequalities \eqref{BHcom} for polynomials of degree at most $d$ on $\mathbb{T}^n$ and $\{-1,1\}^n$, respectively. Then both $\textnormal{BH}^{\le d}_{\mathbb{T}}$ and $ \textnormal{BH}^{\le d}_{\{\pm 1\}}$ are bounded from above by $e^{c\sqrt{d\log d}}$ for some universal $c>0$ \cite{BPS,DMP}. The optimal dependence of $\textnormal{BH}^{\le d}_{\mathbb{T}}$ and $ \textnormal{BH}^{\le d}_{\{\pm 1\}}$ on the degree $d$ remains open.

Historically, the sub-exponential growth phenomenon of the optimal constant $\textnormal{BH}^{\le d}_{\mathbb{T}}$ in the Bohnenblust--Hille inequality \eqref{BHcom} for $\mathbb{T}^n$ has played an important role in resolving some open problems in functional analysis and harmonic analysis \cite{DGMS,BPS,DFOOS}. 

More recently, the Bohnenblust--Hille inequality \eqref{BHcom} on the Boolean cubes has found unexpected application in learning low-degree Boolean functions, a fundamental task in theoretical computer science \cite{Odonnell}.
For a long time, the best polytime algorithm for low-degree learning had a sample complexity exponentially separated in $n$ from what was information theoretically-required (poly($n$) vs. $\Omega(\log n)$) \cite{LMN,fouriergrowth}.
But in 2022 Eskenazis and Ivanisvili closed the gap \cite{EI22}, achieving a sample complexity of $\mathcal{O}(\log n)$ (optimal in $n$ due to \cite{EIS}) for the first time. The key ingredient in their argument is nothing but the Bohnenblust--Hille inequality \eqref{BHcom} on the Boolean cubes. We state it in full detail for later use. Recall that any function $f:\{-1,1\}^n\to \C$ admits the Fourier expansion
\begin{equation*}
f(x)=\sum_{A\subset [n]}\widehat{f}(A)\chi_A(x),\qquad x=(x_1,\dots, x_n)\in \{-1,1\}^n
\end{equation*}
where for each $A\subset [n]:=\{1,2,\dots, n\}$, $\chi_A$ is defined as $\chi_A(x)=\prod_{j\in A}x_j$. Then $f$ is of degree at most $d$ if $\widehat{f}(A)=0$ for $|A|>d$.

\begin{theorem}[Boolean Cube Bohnenblust--Hille \cite{Blei,DMP}]
\label{thm:boolean bh}
Fix $d\ge 1$. There exists a constant $C_d>0$ such that for all $n\ge 1$ and for all $f:\{-1,1\}^n\to [-1,1]$ of degree at most $d$, we have
\begin{equation*}
\|\widehat{f}\|_{\frac{2d}{d+1}}:=\left(\sum_{|A|\le d}|\widehat{f}(A)|^{\frac{2d}{d+1}}\right)^{\frac{d+1}{2d}}\le C_d \|f\|_{\infty}.
\end{equation*}
Moreover, the best constant satisfies $ \textnormal{BH}^{\le d}_{\{\pm 1\}}\le C^{\sqrt{d\log d}}$ for some universal $C>0$.
\end{theorem}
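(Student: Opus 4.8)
The plan is to adapt to the Boolean cube the argument of Bayart, Pellegrino and Seoane-Sepúlveda for the torus \cite{BPS}, which is what is done in \cite{DMP}. Two ingredients drive it. The first is a combinatorial interpolation inequality of Blei, which trades the single mixed norm $\|\widehat f\|_{2d/(d+1)}$ for a geometric mean of mixed $\ell^1$--$\ell^2$ norms built out of the discrete derivatives $\partial_i f=\sum_{A\ni i}\widehat f(A)\chi_{A\setminus\{i\}}$, typically of the form $\sum_{i=1}^n\|\partial_i f\|_{L^2}$. The second is the Bonami--Beckner hypercontractive inequality on $\{-1,1\}^n$, which for a function $g$ of degree $\le d-1$ and $1<q<2$ yields $\|g\|_{L^2}\le (q-1)^{-(d-1)/2}\|g\|_{L^q}$. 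These are combined through an induction on the degree $d$, and the entire difficulty is to arrange the induction so that the constant it produces stays subexponential rather than exponential in $d$.

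Concretely, after standard preliminary reductions (which must be carried out so as not to lose a factor exponential in $d$), Blei's inequality reduces the claim to bounding sums $\sum_i\|\partial_i f\|_{L^2}$. Since each $\partial_i f$ has degree $\le d-1$, I would estimate such a sum by: applying the hypercontractive inequality above to replace $\|\partial_i f\|_{L^2}$ by $(q-1)^{-(d-1)/2}\|\partial_i f\|_{L^q}$ for a parameter $q=q_d<2$ to be chosen; then using the inductive Bohnenblust--Hille bound at degree $d-1$ together with an optimal-sign (Khinchin-type) step that converts the remaining outer $\ell^1$ summation over $i$ into the evaluation of $f$ at a point of the cube, hence into $\|f\|_\infty$. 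This produces a recursion, schematically $\textnormal{BH}^{\le d}_{\{\pm1\}}\lesssim a_d\,\textnormal{BH}^{\le d-1}_{\{\pm1\}}$, in which $a_d$ is the product of a factor increasing in $q_d$ (the residual Khinchin loss) and the hypercontractive price $(q_d-1)^{-(d-1)/2}$, which decreases in $q_d$.

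The crux — and the step I expect to be the main obstacle — is the quantitative bookkeeping that turns this scheme into the stated rate. One has to choose $q_d$ to balance the two competing contributions to $a_d$; a short optimization shows the balance can be struck so that $a_d=1+O\!\big(\sqrt{(\log d)/d}\big)$. Unrolling the recursion then gives $\textnormal{BH}^{\le d}_{\{\pm1\}}\le\exp\!\big(\sum_{k\le d}O(\sqrt{(\log k)/k})\big)=\exp\!\big(O(\sqrt{d\log d})\big)=C^{\sqrt{d\log d}}$, since $\sum_{k\le d}\sqrt{(\log k)/k}\asymp\sqrt{d\log d}$. The other delicate point is the preliminary reduction: a naive polarization/homogenization costs $\tfrac{d^d}{d!}\asymp e^d$ and would wreck subexponentiality, so it must be performed within the multi-affine category. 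If one only wants the qualitative statement that $C_d<\infty$, no optimization is needed: running the same scheme with the plain Khinchin constant at each of the $d$ levels already yields the inequality with a constant growing at most exponentially in $d$, which is Blei's bound \cite{Blei}.
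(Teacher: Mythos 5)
The paper does not prove Theorem \ref{thm:boolean bh} itself but quotes it from \cite{Blei,DMP}, and your outline faithfully reproduces the strategy of that cited proof: Blei's combinatorial interpolation inequality to pass to mixed $\ell^1$--$\ell^2$ norms of discrete derivatives, Bonami--Beckner hypercontractivity for the degree-$(d-1)$ derivatives, an induction on $d$ with the parameter $q_d$ tuned so that the per-step loss is $1+O(\sqrt{(\log d)/d})$, and care in the polarization step to avoid an $e^{d}$ loss. This is essentially the same approach as the source the paper relies on, so nothing further is needed beyond noting that your sketch defers (but correctly identifies) the quantitative bookkeeping carried out in \cite{BPS,DMP}.
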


\medskip

One might ask if a similar approach to low-degree learning could work in the quantum world.
Quantum observables (Hermitian operators) on a system of $n$ qubits admit a ``Fourier-like'' decomposition 
\[
A = \sum_{\alpha\in\{0,1,2,3\}^n}\widehat{A}(\alpha)\sigma_\alpha \quad \text{where} \quad \sigma_\alpha = {\textstyle\bigotimes_{i=1}^n} \sigma_{\alpha_i}
\quad\text{and}\quad \widehat{A}(\alpha)=2^{-n}\mathop{\mathrm{tr}}[A\sigma_\alpha]\,.\]
Here $\sigma_0$ is the  2-by-2 identity matrix and $\sigma_i, 1\leq i \leq 3$ are the Pauli matrices
\[
\sigma_1=\begin{bmatrix}
0 & 1\\1 & 0
\end{bmatrix},\quad
\sigma_2=\begin{bmatrix}
0 & -i\\i & 0
\end{bmatrix},\quad
\sigma_3=\begin{bmatrix}
1 & 0\\0 & -1
\end{bmatrix}.\]
Defining $|\alpha|$ to be the number of nonzero entries in $\alpha$, we say $A$ is \emph{of degree at most $d$} if for all $\alpha$ with $|\alpha|>d$ we have $\widehat{A}(\alpha)=0$.
It was recently identified \cite{RWZ24,HCP22, VZ23} that this analogy is close-enough to the Boolean case that the Eskenazis--Ivanisvili approach goes through for quantum observables as well, provided a Bohnenblust--Hille-type inequality for Pauli decompositions exists, which was also proved:

\begin{theorem}[Qubit Bohnenblust--Hille \cite{HCP22, VZ23}]
    \label{thm:qubit-BH}
    Suppose that $d\ge 1$. Then there exists a constant $C_d>0$ such that for all $n\ge 1$ and all $A$ on $n$ qubits of degree at most $d$, we have
    \[\|\widehat{A}\|_{\frac{2d}{d+1}}:=\Big(\sum_{\alpha\in \{0,1,2,3\}^n}|\widehat{A}(\alpha)|^{\frac{2d}{d+1}}\Big)^{\frac{d+1}{2d}}\leq C_d\|A\|_{\textnormal{op}}\,,\]
    where $\|A\|_{\textnormal{op}}$ denotes the operator norm.
\end{theorem}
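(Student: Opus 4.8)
The plan is to reduce the operator inequality to the commutative Boolean Cube Bohnenblust--Hille inequality (Theorem~\ref{thm:boolean bh}) by pairing $A$ with a suitable family of product states, one family for each pattern of local measurement axes; this is essentially the argument of \cite{VZ23} (see also \cite{HCP22}).

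\emph{Step 1: from $A$ to a family of Boolean polynomials.} Recall the Bloch representation: for a unit vector $r\in\mathbb R^3$ the one-qubit pure state $\rho_r=\tfrac12\big(\sigma_0+r_1\sigma_1+r_2\sigma_2+r_3\sigma_3\big)$ satisfies $\tr(\rho_r\sigma_0)=1$ and $\tr(\rho_r\sigma_j)=r_j$ for $j\in\{1,2,3\}$. Given $s\in\{-1,1\}^n$ and $t\in\{1,2,3\}^n$, set $\Psi_{s,t}=\bigotimes_{i=1}^n\rho_{s_i e_{t_i}}$, with $e_1,e_2,e_3$ the standard basis of $\mathbb R^3$. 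Since $\tr(\rho_{s_i e_{t_i}}\sigma_{\alpha_i})$ equals $1$ if $\alpha_i=0$, equals $s_i$ if $\alpha_i=t_i$, and equals $0$ otherwise, expanding $A=\sum_\alpha\widehat A(\alpha)\sigma_\alpha$ yields
\[
g_t(s):=\tr\big(A\,\Psi_{s,t}\big)=\sum_{S\subseteq[n]}\widehat A\big(\alpha^{t,S}\big)\,\chi_S(s),
\qquad
\alpha^{t,S}_i:=\begin{cases}t_i,& i\in S,\\ 0,& i\notin S.\end{cases}
\]
Thus for each fixed $t$, $g_t$ is a polynomial on $\{-1,1\}^n$ of degree at most $\deg A\le d$, with Fourier coefficients $\widehat{g_t}(S)=\widehat A(\alpha^{t,S})$, and $\|g_t\|_\infty\le\|A\|_{\textnormal{op}}$ since $\Psi_{s,t}$ is a state.

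\emph{Step 2: apply Boolean BH and average.} Applying Theorem~\ref{thm:boolean bh} to each $g_t$ (extended to complex-valued functions at the cost of a factor $2$, by splitting into real and imaginary parts, which preserves the degree bound) gives, with $p=\tfrac{2d}{d+1}$,
\[
\sum_{S\subseteq[n]}\big|\widehat A(\alpha^{t,S})\big|^{p}=\|\widehat{g_t}\|_{p}^{p}\le\big(C_d\,\|A\|_{\textnormal{op}}\big)^{p}\qquad\text{for every }t\in\{1,2,3\}^n.
\]
Average this over $t$ drawn uniformly from $\{1,2,3\}^n$. For fixed $t$ the map $S\mapsto\alpha^{t,S}$ is injective with image $\{\alpha:\alpha_i\in\{0,t_i\}\ \forall i\}$, and a given $\alpha$ lies in this image with $t$-probability $3^{-|\alpha|}$; hence the left-hand side averages to $\sum_\alpha 3^{-|\alpha|}|\widehat A(\alpha)|^{p}$. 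Therefore $\sum_\alpha 3^{-|\alpha|}|\widehat A(\alpha)|^{p}\le(C_d\|A\|_{\textnormal{op}})^{p}$, and since $|\alpha|\le d$ whenever $\widehat A(\alpha)\ne0$ we obtain $\sum_\alpha|\widehat A(\alpha)|^{p}\le 3^{d}(C_d\|A\|_{\textnormal{op}})^{p}$, i.e.
\[
\|\widehat A\|_{\frac{2d}{d+1}}\le 3^{\frac{d+1}{2}}\,C_d\,\|A\|_{\textnormal{op}},
\]
which is the claim with a new $d$-dependent (but $n$-independent) constant.

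\emph{On the main difficulty.} Given Theorem~\ref{thm:boolean bh}, the reduction itself is soft; the only genuine point is that no single product-state pairing --- equivalently, no single conjugation of $A$ by a tensor product of one-qubit Clifford unitaries followed by extracting the diagonal --- sees all Pauli coefficients of $A$ simultaneously, which is why one averages over the $3^n$ axis-patterns $t$ and pays the factor $3^{(d+1)/2}$. This crude bound already proves the statement above; recovering the sub-exponential-in-$d$ growth of the optimal constant (as in \cite{HCP22,VZ23}) needs a more delicate argument and is not required here.
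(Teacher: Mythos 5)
Your proof is correct. It shares the paper's overall strategy --- reduce to the commutative Boolean-cube BH inequality (Theorem~\ref{thm:boolean bh}) by evaluating $\tr[A\rho]$ against product states indexed by Boolean variables --- but the reduction mechanism is genuinely different. The paper (via the Gell-Mann argument of Section~\ref{sec:bh GM}, whose $\go=2$ case is exactly the qubit theorem, following \cite{VZ23}) builds a \emph{single} family of \emph{mixed} product states $\rho(x,y,z)$ indexed by the larger cube $\{-1,1\}^{3n}$: each local factor is a uniform mixture of eigenprojections of all three Paulis plus a diagonal correction, arranged so that $\tr[\sigma_j\rho]=\tfrac13\cdot(\text{coordinate})$. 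This makes $f_A(\vec x,\vec y,\vec z)=\tr[A\rho(\vec x,\vec y,\vec z)]$ one degree-$d$ polynomial in $3n$ variables whose coefficients are $3^{-|\alpha|}\widehat A(\alpha)$, and a single application of Boolean BH yields the constant $3^d\,\textnormal{BH}^{\le d}_{\{\pm1\}}$. You instead use \emph{pure} states aligned with one axis per qubit, producing $3^n$ polynomials $g_t$ in only $n$ variables whose coefficients are the relevant $\widehat A(\alpha)$ \emph{without attenuation}; you apply Boolean BH to each and average over $t$, so the weight $3^{-|\alpha|}$ appears inside the $\ell_p$ sum rather than on the coefficients themselves. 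Because $p=\tfrac{2d}{d+1}<2$, this accounting gives the marginally better constant $3^{(d+1)/2}C_d$ (times $2$ for non-Hermitian $A$) versus the paper's $3^dC_d$. The trade-off is that the paper's single-cube construction generalizes directly to the Gell-Mann basis for $\go>2$ (where the local factors must mix $\go^2-1$ basis elements), whereas your axis-averaging exploits that each single-qubit Pauli has an orthonormal eigenbasis realizable by pure Bloch states; both are complete and correct proofs of the stated theorem.
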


The qubit BH inequality, Theorem \ref{thm:qubit-BH}, has received two very different proofs.
In \cite{HCP22} Huang, Chen and Preskill pursue a direct proof and notably develop a physically-motivated ``algorithmic'' procedure to prove the key step in BH-type arguments known as \emph{polarization}.
They achieve the dimension-free constant $C_d\leq \mathcal{O}(d^d)$.
Another proof approach appears in \cite{VZ23}, which works by reducing the qubit BH inequality to the Boolean cube BH inequality.
Let $\BH^{\leq d}_{M_2}$ denote the optimal constant in Theorem \ref{thm:qubit-BH} (where $M_2$ designates the 2-by-2 complex matrix algebra).
Then \cite{VZ23} showed $\BH^{\leq d}_{M_2}\leq 3^d\BH^{\leq d}_{\{\pm 1\}}\leq C^{d}$.

Pauli matrices are very special objects, being Hermitian, unitary, and anticommuting, and it was unclear whether the reduction approach in \cite{VZ23} could be extended to the qudit setting, where higher-dimensional generalizations of Pauli matrices are not so well-behaved.
In fact we succeed in extending the reduction argument to different two bases for the complex matrix algebra $M_\go(\C)$, tensors of which form the appropriate space for qudit systems.
The bases under study are known as the (generalized) \emph{Gell-Mann basis} (GM basis for short) and the \emph{Heisenberg--Weyl basis} (HW basis for short), are orthonormal with respect to the normalized trace inner product $\frac{1}{\go}\tr[A^\dagger B]$, and are respectively Hermitian and unitary generalizations of the 2-dimensional Pauli basis.
Our proofs of these extensions reveal some pleasing features of the geometry of the eigenvectors of GM and HW matrices. 

To distinguish the Fourier expansions in these two bases, we use $\mathcal{A}$ to denote the observable in the GM basis, while we reserve $A$ for the HW basis. In the following, for fixed $\go\ge 2$ we use $e_j=\ket{j},1\le j\le \go$ to denote the canonical basis of $\C^\go$.

\subsection{Main results for the Gell-Mann Basis}

\begin{definition}[Gell-Mann Basis]
    Let $\go\ge 2$. Put $E_{jk}=\ket{j}\!\!\bra{k}, 1\le j,k\le \go$.
    The generalized Gell-Mann Matrices are a basis of $M_\go(\C)$ and are comprised of the identity matrix $\un$ along with the following generalizations of the Pauli matrices:
\begin{align*}
	\text{symmetric: }&& \bA_{jk} &= {\textstyle\sqrt{\!\frac{\go}{2}}}\big(E_{jk}+E_{kj}\big) & &\text{ for } 1\leq j<k\leq \go\\[0.3em]
	\text{antisymmetric: }&& \bB_{jk} &= {\textstyle\sqrt{\!\frac{\go}{2}}}\big(-iE_{jk}+iE_{kj}\big) & &\text{ for } 1\leq j < k\leq \go\\
	\text{diagonal: }&& \bC_m\, &= \renorm_{m}\left(\textstyle\sum_{k=1}^mE_{kk}-mE_{m+1,m+1}\right)  & &\text{ for } 1\leq m \leq \go-1,
\end{align*}
where $\renorm_{m} :=\sqrt{\!\frac{\go}{m^2+m}}$.
We denote
\[
\textnormal{GM(\go)}:= \{\un, \bA_{jk}, \bB_{jk}, \bC_m\}_{1\leq j<k\leq \go, 1\leq m \leq \go-1}\,.
\]
\end{definition}

An observable $\cA\in M_{\go}(\C)^{\otimes n}$ has expansion in the GM basis as
\[\cA = \sum_{\alpha\in \Lambda_\go^n}\widehat{\cA}(\alpha)M_\alpha=\sum_{\alpha\in \Lambda_\go^n}\widehat{\cA}(\alpha){\textstyle\bigotimes_{j=1}^n} M_{\alpha_j}\]
for some index set $\Lambda_\go$ (so $\{M_\alpha\}_{\alpha\in\Lambda_\go}=\mathrm{GM}(\go)$).
Letting $|\alpha|=|\{j:M_{\alpha_j}\neq \mathbf{I}\}|$, we say $\cA$ is \emph{of degree at most $d$} if $\widehat{\cA}(\alpha)=0$ for all $\alpha$ with $|\alpha|>d$.

We find the Gell-Mann BH inequality enjoys a reduction to the Boolean cube BH inequality on $\{-1,1\}^{n(\go^2-1)}$ and obtain the following.

\begin{theorem}[Qudit Bohnenblust--Hille, Gell-Mann Basis]\label{thm:GM}
	Fix any $\go\ge 2$ and $d\ge 1$. There exists $C(d,\go)>0$ such that for all $n\ge 1$ and GM observable $\cA\in M_\go(\C)^{\otimes n}$ of degree at most $d$, we have
	\begin{equation}\label{ineq:bh GM}
		\|\widehat{\cA}\|_{\frac{2d}{d+1}}\le C(d,\go)\|\cA\|_{\textnormal{op}}.
	\end{equation}
Moreover, we have $C(d,\go)\le \big(\tfrac32(\go^2-\go)\big)^d \textnormal{BH}^{\le d}_{\{\pm 1\}}$.
\end{theorem}
\noindent In particular, for $\go=2$ we recover the main result of \cite{VZ23} exactly. The proof of Theorem \ref{thm:GM} is contained in Section \ref{sec:bh GM}.

\subsection{Main results for the Heisenberg--Weyl Basis}

\begin{definition}[Heisenberg--Weyl Basis]
Fix $\go\geq 2$ and let $\omega =\omega_\go= \exp(2\pi i/\go)$.
Define the $\go$-dimensional \emph{clock} and \emph{shift} matrices respectively via
\begin{equation*}
Z\ket{j} = \om^j \ket{j},\qquad 
	X \ket{j}= \ket{j+1}\qquad\textnormal{for all} \qquad j\in \mathbb{Z}_\go.
\end{equation*}
Here $\Z_{\go}:=\{0,1,\ldots, \go-1\}$ denotes the additive cyclic group of order $\go$. Note that $X^\go=Z^\go=\un$. See more in \cite{AEHK}. 
Then the Heisenberg--Weyl basis for $M_\go(\C)$ is
\[
\textnormal{HW}(\go):=\{X^\ell Z^m\}_{\ell,m\in \Z_\go}\,.
\]
\end{definition}
\noindent Any observable $A\in M_\go(\bC)^{\otimes n}$ has a unique Fourier expansion with respect to $\textnormal{HW}(\go)$ as well:
\begin{equation}
    \label{expHW}
	A=\sum_{\vec{\ell},\vec{m}\in \mathbb{Z}_\go^n}\widehat{A}(\vec{\ell},\vec{m})X^{\ell_1}Z^{m_1}\otimes \cdots \otimes X^{\ell_n}Z^{m_n},
\end{equation}
where $\widehat{A}(\vec{\ell},\vec{m})\in\C$ is the Fourier coefficient at $(\vec{\ell},\vec{m})$. 
We say that $A$ is \emph{of degree at most $d$} if $\widehat{A}(\vec{\ell},\vec{m})=0$ whenever 
\begin{equation*}
	|(\vec{\ell},\vec{m})|:=\sum_{j=1}^{n}(\ell_j+m_j)>d.
\end{equation*}
Here, $0\le \ell_j,m_j\le \go-1$.

Noting that the eigenvalues of Heisenberg--Weyl matrices are the roots of unity, it is natural to pursue a reduction to a scalar BH inequality over $\Omega_\go^n$, the multiplicative cyclic group of order $\go$---precisely the inequality needed for classical learning on functions on $\Omega_\go^n$.
This reduction works well when $\go$ is prime.

\begin{theorem}[Qudit Bohnenblust--Hille, Heisenberg--Weyl Basis: prime case]\label{thm:bh HW prime}
	Fix a prime number $\go\ge 2$ and suppose $d\ge 1$.
    Consider an observable $A\in M_\go(\C)^{\otimes n}$ of degree at most $d$.
    Then we have
	\begin{equation}
        \label{ineq:bh-hw prime}
		\|\widehat{A}\|_{\frac{2d}{d+1}}\le C(d,\go)\|A\|_{\textnormal{op}},
	\end{equation}
	with $C(d,\go)\le (\go+1)^d \textnormal{BH}_{\Omega_\go}^{\le d}$.
\end{theorem}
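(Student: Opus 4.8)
The plan is to adapt the Gell--Mann reduction (Theorem~\ref{thm:GM}), replacing eigenbases of individual Pauli-type matrices by the \emph{mutually unbiased bases} attached to the Heisenberg--Weyl matrices, and using that $\go$ is prime to control which Fourier coefficients a given basis ``sees''. \textbf{Step 1 (MUB structure; this is where primality is used).} Since $\go$ is prime, $\Z_\go$ is a field and the nonzero vectors of $\Z_\go^2$ split into $\go+1$ lines $L$ (one-dimensional $\Z_\go$-subspaces), each with a generator $(a_L,b_L)$ and exactly $\go-1$ nonzero points, so $(\go+1)(\go-1)=\go^2-1$. From $ZX=\om XZ$ one gets $W_L(X^\ell Z^m)W_L^{-1}=\om^{\,b_L\ell-a_Lm}\,X^\ell Z^m$ with $W_L:=X^{a_L}Z^{b_L}$, and the exponent vanishes mod $\go$ \emph{precisely} when $(\ell,m)\in L$ (this step inverts a nonzero coordinate of $(a_L,b_L)$, hence needs primality). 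Thus $\{X^\ell Z^m:(\ell,m)\in L\}$ is a commuting family with a common orthonormal eigenbasis $\{\ket{e^L_r}\}_{r\in\Z_\go}$, which we label so that $W_L\ket{e^L_r}=\eta_L\om^{r}\ket{e^L_r}$; writing $X^\ell Z^m=\xi\,W_L^{t}$ (a root of unity $\xi$ times a power of $W_L$, for $(\ell,m)=t(a_L,b_L)$) gives $\langle e^L_r|X^\ell Z^m|e^L_r\rangle=c_{(\ell,m),L}\,\om^{\,tr}$ with $c_{(\ell,m),L}=\xi\eta_L^{t}$ a unimodular scalar \emph{independent of $r$}. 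On the other hand, if $(\ell,m)\notin L$, then conjugation by $W_L$ fixes $\langle e^L_r|\cdot|e^L_r\rangle$ while scaling $X^\ell Z^m$ by the nontrivial root of unity $\om^{\,b_L\ell-a_Lm}$, so $\langle e^L_r|X^\ell Z^m|e^L_r\rangle=0$ for all $r$.

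\textbf{Step 2 (a single polynomial on $\Omega_\go^{(\go+1)n}$).} Index new variables by pairs $(j,L)$, $j\in[n]$ and $L$ a line. Given $\vec s=(s_{j,L})\in\Z_\go^{(\go+1)n}$, set $\rho_j:=\tfrac{1}{\go+1}\sum_{L}\ketbra{e^{L}_{s_{j,L}}}{e^{L}_{s_{j,L}}}$, a density matrix on $\C^\go$, and define $F(\vec s):=\tr\!\big[A\bigotimes_{j=1}^{n}\rho_j\big]$. By Step~1, $\tr[\rho_j X^\ell Z^m]$ equals $1$ when $(\ell,m)=(0,0)$ and $\tfrac{1}{\go+1}c_{(\ell,m),L}\,\om^{\,t s_{j,L}}$ otherwise, where $L$ is the line through $(\ell,m)$ and $(\ell,m)=t(a_L,b_L)$. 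Expanding $A$ in the Heisenberg--Weyl basis \eqref{expHW} and multiplying out,
\[
F(\vec s)=\sum_{\vec\ell,\vec m}\widehat A(\vec\ell,\vec m)\,(\go+1)^{-|S|}\Big(\prod_{j\in S}c_{(\ell_j,m_j),L_j}\Big)\prod_{j\in S}\om^{\,t_j s_{j,L_j}},
\]
where $S=S(\vec\ell,\vec m):=\{j:(\ell_j,m_j)\ne(0,0)\}$, $L_j$ is the line through $(\ell_j,m_j)$, and $(\ell_j,m_j)=t_j(a_{L_j},b_{L_j})$. Each summand is a monomial in the $|S|$ variables $\{\om^{s_{j,L_j}}:j\in S\}$; distinct $(\vec\ell,\vec m)$ produce distinct monomials (the monomial records which $(j,L)$-variables occur, and with which exponents, whence one recovers $(\vec\ell,\vec m)$), and $|S|\le\sum_j(\ell_j+m_j)\le d$. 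Hence $F$ is a polynomial on $\Omega_\go^{(\go+1)n}$ of degree at most $d$ with $\|F\|_\infty\le\|A\|_{\textnormal{op}}$ (as $\bigotimes_j\rho_j$ is a density matrix), and on the monomial indexed by $(\vec\ell,\vec m)$ its Fourier coefficient has modulus $(\go+1)^{-|S|}|\widehat A(\vec\ell,\vec m)|\ge(\go+1)^{-d}|\widehat A(\vec\ell,\vec m)|$.

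\textbf{Step 3 (conclude).} Applying the cyclic-group Bohnenblust--Hille inequality \cite{SVZ} to $F$ on $\Omega_\go^{(\go+1)n}$, and writing $\|\widehat F\|_{\frac{2d}{d+1}}$ for the $\ell^{2d/(d+1)}$-norm of its coefficient sequence,
\[
(\go+1)^{-d}\,\|\widehat A\|_{\frac{2d}{d+1}}\;\le\;\|\widehat F\|_{\frac{2d}{d+1}}\;\le\;\BH^{\le d}_{\Omega_\go}\,\|F\|_\infty\;\le\;\BH^{\le d}_{\Omega_\go}\,\|A\|_{\textnormal{op}},
\]
which is \eqref{ineq:bh-hw prime} with $C(d,\go)\le(\go+1)^d\,\BH^{\le d}_{\Omega_\go}$.

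\textbf{Expected main obstacle.} The crux is Step~1: the partition of $\Z_\go^2\setminus\{0\}$ into $\go+1$ lines and the vanishing of off-line diagonal entries both rely on $\Z_\go$ being a field, so the argument genuinely needs $\go$ prime --- for composite $\go$ the form $b\ell-am$ has zero divisors, the lines overlap and fail to cover $\Z_\go^2\setminus\{0\}$, and a maximal commuting family cannot be organized this cleanly. What remains is bookkeeping of the unimodular phases $c_{(\ell,m),L}$ (from commuting $X$ past $Z$ and from the eigenvector labeling) and the offset $\eta_L$ --- which is $1$ for $\go$ odd since then $W_L^\go=\un$, and a harmless fourth root of unity for $\go=2$; as only $|\widehat F|$ enters the estimate, these play no role.
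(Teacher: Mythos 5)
Your proposal follows essentially the same route as the paper's proof: the $\go+1$ lines of $\Z_\go^2$ are exactly the paper's generator set $\Sigma_\go=\{(1,0),(1,1),\dots,(1,\go-1),(0,1)\}$ (Lemma \ref{lem:HW basis prime}), your vanishing of off-line diagonal entries is the paper's Lemma \ref{lem:orthogonal}, your $\rho_j$ is the paper's $\rho_j(\vec\om)$, and the reduction to the cyclic BH inequality with constant $(\go+1)^d$ is identical.

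One bookkeeping point needs repair. In Step 2 you justify ``$F$ has degree at most $d$'' by the bound $|S|\le\sum_j(\ell_j+m_j)\le d$, but $|S|$ is only the number of variables appearing in the monomial $\prod_{j\in S}\om^{\,t_j s_{j,L_j}}$. The degree relevant to Theorem \ref{thm:bh cyclic} is $\sum_{j\in S}t_j$, where each exponent $t_j\in\{1,\dots,\go-1\}$, so a single-site HW monomial of degree $1$ could in principle produce a classical monomial of degree up to $\go-1$. To get $\sum_j t_j\le d$ you must normalize the line generators: with $(a_L,b_L)\in\Sigma_\go$ one has $(\ell,m)=\ell\cdot(1,\ell^{-1}m)$ for $\ell\neq 0$ and $(0,m)=m\cdot(0,1)$, hence $t_j\le \ell_j+m_j$ and the total degree of $F$ is indeed at most $d$. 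With an arbitrary choice of generator $(a_L,b_L)$ this fails (e.g.\ for $\go=5$, representing $(1,0)$ as $2\cdot(3,0)$ gives $t=2$ from a degree-$1$ term). This is exactly the normalization the paper builds into $\Sigma_\go$; once you fix it, your argument is complete.
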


When $\go$ is non-prime, the reduction still works under modifications, namely the degree may jump from $d$ up to $(\go-1)d$.

\begin{theorem}[Qudit Bohnenblust--Hille, Heisenberg--Weyl Basis: non-prime case]\label{thm:bh HW nonprime}
	Fix a non-prime number $\go\ge 4$ and suppose $d\ge 1$. Consider an observable $A\in M_\go(\C)^{\otimes n}$ of degree at  most $d$.
    Then we have
	\begin{equation}
        \label{ineq:bh-hw nonprime}
		\|\widehat{A}\|_{\frac{2(\go-1)d}{(\go-1)d+1}}\le C(d,\go)\|A\|_{\textnormal{op}},
	\end{equation}
	with $C(d,\go)\le \go^{2d} \textnormal{BH}_{\Omega_\go}^{\le (\go-1)d}$. In fact, the constant $\go^{2d}$ can be replaced by $|\Sigma_\go|^d$ with $|\Sigma_\go|$ being the cardinality of $\Sigma_\go=\{(\ell,m)\in \Z_\go\times \Z_\go: \ell \textnormal{ and } m \textnormal{ are coprime}\}$.
\end{theorem}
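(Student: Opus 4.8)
\emph{Overall strategy.} The plan is to reduce inequality \eqref{ineq:bh-hw nonprime} to the scalar Bohnenblust--Hille inequality on the multiplicative cyclic group $\Omega_\go^n$ (cf.\ \cite{SVZ}), but \emph{at degree $(\go-1)d$ rather than $d$}; this is the qudit analogue of the prime-case reduction, with the ``lines'' of $\Z_\go^2$ replaced by cyclic maximal isotropic subgroups. Equip $\Z_\go\times\Z_\go$ with the symplectic form $\langle(\ell,m),(\ell',m')\rangle=\ell m'-\ell'm\bmod\go$, and recall that conjugating $X^\ell Z^m$ by $X^{\ell'}Z^{m'}$ rescales it by $\omega^{\pm\langle(\ell,m),(\ell',m')\rangle}$. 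For each site $j\in[n]$ choose a primitive pair $\theta_j=(\ell_j^0,m_j^0)\in\Sigma_\go$ and set $W_j=X^{\ell_j^0}Z^{m_j^0}$. Since $\gcd(\ell_j^0,m_j^0)=1$, the element $\theta_j$ has order $\go$, so $\{\theta_j\}^{\perp}=\langle\theta_j\rangle$ is maximal isotropic and $W_j$ has $\go$ distinct eigenvalues; these have the form $\zeta_{0,j}\omega^{s}$, $s\in\Z_\go$, for a modulus-one constant $\zeta_{0,j}$ ($\zeta_{0,j}=1$ when $\go$ is odd, and $\zeta_{0,j}$ is a $2\go$-th root of unity when $\go$ is even and $\ell_j^0m_j^0$ is odd), so we may index the eigenvectors $|v_j(z)\rangle$ by $z\in\Omega_\go$ with $W_j|v_j(z)\rangle=\zeta_{0,j}z|v_j(z)\rangle$. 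The key computation is that $\phi^{(\ell,m)}_{\theta_j}(z):=\langle v_j(z)|X^\ell Z^m|v_j(z)\rangle$ is $(\mathrm{i})$ identically $0$ when $(\ell,m)\notin\langle\theta_j\rangle$ --- because the conjugation relation forces $W_j(X^\ell Z^m|v_j(z)\rangle)=\omega^{\pm\langle(\ell,m),\theta_j\rangle}\zeta_{0,j}z\cdot X^\ell Z^m|v_j(z)\rangle$ with $\omega^{\pm\langle(\ell,m),\theta_j\rangle}\ne1$ (as $(\ell,m)\notin\{\theta_j\}^{\perp}$), so this vector lies in another $W_j$-eigenspace and is orthogonal to $|v_j(z)\rangle$ --- and $(\mathrm{ii})$ equal to $c_j(\theta_j,t)\,z^{t}$ with $|c_j(\theta_j,t)|=1$ when $(\ell,m)=t\theta_j$ in $\Z_\go^2$, using $X^\ell Z^m=X^{t\ell_j^0}Z^{tm_j^0}=\omega^{-\ell_j^0m_j^0\binom t2}W_j^{t}$ (and $c_j(\theta_j,0)=1$).

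\emph{The reduced scalar function.} For $\vec\theta\in\Sigma_\go^n$ put $|\psi_z\rangle:=\bigotimes_{j=1}^{n}|v_j(z_j)\rangle$ and $g_{\vec\theta}(z):=\langle\psi_z|A|\psi_z\rangle$ for $z\in\Omega_\go^n$; as a numerical-range value of $A$ this satisfies $\|g_{\vec\theta}\|_\infty\le\|A\|_{\textnormal{op}}$. Expanding $A$ via \eqref{expHW} and applying $(\mathrm{i})$--$(\mathrm{ii})$ in each coordinate, every Heisenberg--Weyl term with some $(\ell_j,m_j)\notin\langle\theta_j\rangle$ vanishes, and the survivors are parametrised bijectively by $\vec t\in\Z_\go^n$ through $(\ell_j,m_j)=t_j\theta_j$ (this uses that $\langle\theta_j\rangle$ is cyclic of order $\go$). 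Hence
\[
g_{\vec\theta}(z)=\sum_{\vec t\in\Z_\go^n}\Big(\prod_{j=1}^{n}c_j(\theta_j,t_j)\Big)\,\widehat A\big(\Phi(\vec\theta,\vec t)\big)\,z^{\vec t},\qquad \Phi(\vec\theta,\vec t):=\big(t_1\theta_1,\dots,t_n\theta_n\big)\bmod\go,
\]
so $|\widehat{g_{\vec\theta}}(\vec t)|=|\widehat A(\Phi(\vec\theta,\vec t))|$. Moreover $\deg g_{\vec\theta}\le(\go-1)d$: if $\widehat A(\Phi(\vec\theta,\vec t))\ne0$ and $S=\{j:t_j\ne0\}$ is the support of $\Phi(\vec\theta,\vec t)$, then $|S|\le|\Phi(\vec\theta,\vec t)|\le d$ and $t_j\le\go-1$ for $j\in S$, so $\sum_j t_j\le(\go-1)|S|\le(\go-1)d$. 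This is exactly where the prime bound degenerates: the primitive generator we must use at a site can be a large $t_j$-th root of a low-weight block $(\ell_j,m_j)$, so the scalar degree genuinely inflates. Applying the cyclic BH inequality at degree $(\go-1)d$ to each $g_{\vec\theta}$ gives $\|\widehat{g_{\vec\theta}}\|_{p}\le\textnormal{BH}_{\Omega_\go}^{\le(\go-1)d}\|A\|_{\textnormal{op}}$ with $p=\tfrac{2(\go-1)d}{(\go-1)d+1}$.

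\emph{Averaging over $\vec\theta$.} Summing the last bound over $\vec\theta\in\Sigma_\go^n$ and reindexing the left side by $(\vec\ell,\vec m)=\Phi(\vec\theta,\vec t)$: a fixed $(\vec\ell,\vec m)$ with support $S$ arises from exactly $|\Sigma_\go|^{\,n-|S|}\prod_{j\in S}N_j$ pairs $(\vec\theta,\vec t)$, where $N_j:=\#\{\theta\in\Sigma_\go:(\ell_j,m_j)\in\langle\theta\rangle\}\ge1$ (for $j\notin S$, $t_j=0$ with $\theta_j$ free; for $j\in S$, one admissible $t_j$ per admissible $\theta_j$, and $N_j\ge1$ since $\theta=(\ell_j,m_j)/\gcd(\ell_j,m_j)$ works). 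Using $N_j\ge1$ and $|S|\le d$,
\[
|\Sigma_\go|^{\,n-d}\,\|\widehat A\|_p^p\;\le\;\sum_{\vec\theta\in\Sigma_\go^n}\|\widehat{g_{\vec\theta}}\|_p^p\;\le\;|\Sigma_\go|^{\,n}\big(\textnormal{BH}_{\Omega_\go}^{\le(\go-1)d}\,\|A\|_{\textnormal{op}}\big)^p,
\]
and dividing by $|\Sigma_\go|^{\,n-d}$, taking $p$-th roots, and noting $d/p=\tfrac d2+\tfrac1{2(\go-1)}\le d$ yields $\|\widehat A\|_p\le|\Sigma_\go|^{d}\,\textnormal{BH}_{\Omega_\go}^{\le(\go-1)d}\,\|A\|_{\textnormal{op}}$; since $|\Sigma_\go|\le\go^2$ this gives the constant $\go^{2d}$. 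I expect the main obstacle to be statement $(\mathrm{i})$ --- the clean vanishing of ``off-subgroup'' expectation values in the non-prime setting --- which must be extracted from the self-orthogonality $\langle\theta_j\rangle^{\perp}=\langle\theta_j\rangle$ together with the fact that a primitive generator yields a multiplicity-free, genuinely $\Omega_\go$-indexed eigenbasis (with the even-$\go$ phase $\zeta_{0,j}$ carefully tracked); once $(\mathrm{i})$ is secured, the inflation of the degree to $(\go-1)d$ is unavoidable and accounts for the modified exponent in \eqref{ineq:bh-hw nonprime}.
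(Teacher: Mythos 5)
Your proof is correct, and while it rests on the same algebraic facts as the paper's argument (the commutation relation forcing $\langle v,X^\ell Z^m v\rangle=0$ for $(\ell,m)\notin\langle\theta\rangle$, \emph{i.e.} Lemmas \ref{lem:HW basis general}(3) and \ref{lem:orthogonal}; the identity $X^{t\ell_0}Z^{tm_0}=\om^{-\frac12 t(t-1)\ell_0m_0}W^t$ turning on-subgroup expectations into monomials $z^t$; and the resulting degree inflation to $(\go-1)d$), the reduction is organized differently. The paper fixes, at each site, the \emph{mixed} state $\rho_j(\vec\om)=\frac{1}{|\Sigma_\go|}\sum_{(\ell,m)\in\Sigma_\go}\ketbra{e^{\ell,m}_{\cdot}}{e^{\ell,m}_{\cdot}}$, producing a single scalar function $f_A$ in $|\Sigma_\go|n$ variables; there each $\tr[X^{\ell'}Z^{m'}\rho_j]$ is a \emph{polynomial} (several monomials, one for each generator whose subgroup contains $(\ell',m')$), and one must check that distinct Heisenberg--Weyl profiles contribute disjoint sets of scalar monomials, each with coefficient of modulus exactly $|\Sigma_\go|^{-\kappa}$. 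You instead use rank-one (pure product) states for a single generator choice $\theta_j$ per site, obtaining a family $\{g_{\vec\theta}\}_{\vec\theta\in\Sigma_\go^n}$ of functions in only $n$ variables, each of whose Fourier coefficients equals a single $|\widehat A(\Phi(\vec\theta,\vec t))|$ with no multiplicity subtlety, and you recover $\|\widehat A\|_p$ by the counting bound $|\Sigma_\go|^{n-|S|}\prod_{j\in S}N_j\ge|\Sigma_\go|^{n-d}$ after summing $\|\widehat{g_{\vec\theta}}\|_p^p$ over $\vec\theta$. Your route is cleaner on the injectivity/multiplicity issue and even yields the marginally better constant $|\Sigma_\go|^{d/p}\le|\Sigma_\go|^{d/2+1/(2(\go-1))}$; the paper's single-function construction has the advantage of furnishing one fixed family of product states (hence one sampling distribution) for the learning application. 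Two small points worth making explicit in a write-up: the eigenspaces of $W_j$ are one-dimensional (the $\go$ eigenvalues are distinct by Lemma \ref{lem:HW basis general}(4)), so $\ket{v_j(z)}$ is well-defined up to an irrelevant phase; and for $j\notin S$ the constraint $t_j\theta_j\equiv(0,0)$ forces $t_j=0$ precisely because $\theta_j$ has order $\go$.
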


The proofs of Theorems \ref{thm:bh HW prime} and \ref{thm:bh HW nonprime} are contained in Section \ref{sec:hw-bh}. The full strength of Theorems \ref{thm:bh HW prime} and \ref{thm:bh HW nonprime} relies on the BH inequality for the cyclic groups $\Omega_\go^n$; \emph{i.e.}, the finiteness of the Bohnenblust--Hille constant $\textnormal{BH}_{\Omega_\go}^{\le d}$ for cyclic groups which we shall explain in more detail. This remained unclear for general $2<\go<\infty$ in the first version of this paper. Now it is fully proved \cite{SVZ,SVZgmp} and we below quote the results to clarify that the constants $C(d,\go)$ in \eqref{ineq:bh-hw prime} and \eqref{ineq:bh-hw nonprime} are indeed dimension-free.

\medskip 

Fix $\go\ge 3$ and denote $\omega:=e^{2\pi i/\go}.$ Let $\Omega_{\go}:=\{1,\omega,\omega^2,\dots, \omega^{\go-1}\}.$ Then any function $f:\Omega
_{\go}^n\to \C$ admits the unique Fourier expansion 
\begin{equation}
f(z)=\sum_{\alpha}\widehat{f}(\alpha)z^\alpha, 
\end{equation}
where $\alpha=(\alpha_1,\dots, \alpha_n)$ are vectors of non-negative integers and each $\alpha_j\le \go-1$. We say $f$ is \emph{of degree at most $d$} if $\widehat{f}(\alpha)=0$ whenever $|\alpha|>d.$ The following result was proved in \cite{SVZ,SVZgmp,ksvzITCS,BKSVZremez}.

\begin{theorem}[Cyclic Bohnenblust--Hille]
\label{thm:bh cyclic}
Fix $\go\ge 3$ and $d\ge 1$. There exists $C(d,\go)>0$ such that for all $n\ge 1$ and for all $f:\Omega
_{\go}^n\to \C$ of degree at most $d$, we have 
\begin{equation}\label{ineq:cyclic bh}
\|\widehat{f}\|_{\frac{2d}{d+1}}\le C(d,\go)\sup_{z\in \Omega_{\go}^n}|f(z)|.
\end{equation} 
\end{theorem}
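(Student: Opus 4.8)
The plan is to transplant the proof of the Boolean-cube inequality (Theorem~\ref{thm:boolean bh}, after \cite{DMP}) to the cyclic alphabet $\Omega_\go$; its three ingredients survive the move. First, the base case $d=1$: for $f=c_0+\sum_{j=1}^n c_jz_j$ one rotates each $z_j\in\Omega_\go$ so that $c_jz_j$ lands within angle $\pi/\go$ of a common direction, giving $\sup_{\Omega_\go^n}|f|\gtrsim \cos(\pi/\go)\sum_j|c_j|$ after handling $c_0$, hence $\|\widehat f\|_1\le C\sup|f|$ with $C\sim 1/\cos(\pi/\go)$. Second, Blei's inequality is purely combinatorial and transfers verbatim: it bounds $\|\widehat f\|_{\frac{2d}{d+1}}$ by a geometric mean of mixed $\ell_1$--$\ell_2$ norms of the coefficient array taken over the $d$ ``directions''. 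Third, one runs the degree induction of \cite{DMP}, reducing the degree-$d$ estimate to degree-$(d-1)$ estimates of the restrictions/slices $f|_{z_j=\omega^k}$, with the recursion closed by a hypercontractive inequality. This last item is the only genuinely new piece.

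What is needed is a Bonami--Beckner-type hypercontractive inequality for the single $\go$-point group $\Omega_\go$ which, after tensorising to $\Omega_\go^n$ and restricting to degree $\le d$, yields $\|g\|_{L^q(\Omega_\go^n)}\le A(\go)^{\,d}\,\|g\|_{L^2(\Omega_\go^n)}$ for a suitable exponent $q$, with $A(\go)<\infty$ \emph{independent of} $n$. Unlike the Boolean case, the sharp one-coordinate constant on $\Z_\go$ is not given by the clean formula $(q-1)^{1/2}$ and must be estimated by hand; but any finite bound suffices here, and the entire $\go$-dependence is then confined to $A(\go)$ and to the $d=1$ constant above, never to $n$. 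Feeding this into the Blei-plus-induction scheme of \cite{DMP} produces a finite, dimension-free $C(d,\go)$, establishing \eqref{ineq:cyclic bh}. An alternative, arguably cleaner route is to reduce to the torus: the reduced extension $\tilde f(z)=\sum_{\alpha}\widehat f(\alpha)z^\alpha$ (with $0\le\alpha_j\le\go-1$) is an analytic polynomial on $\mathbb{T}^n$ of total degree $\le d$ with the same Fourier coefficients, so $\|\widehat f\|_{\frac{2d}{d+1}}\le \textnormal{BH}^{\le d}_{\mathbb{T}}\sup_{\mathbb{T}^n}|\tilde f|$ by \cite{BPS,DMP}; one then only needs a dimension-free Remez-type inequality $\sup_{\mathbb{T}^n}|P|\le R(d,\go)\sup_{\Omega_\go^n}|P|$ for degree-$\le d$ analytic polynomials $P$, since $\tilde f|_{\Omega_\go^n}=f$.

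Either way, the crux---and where I expect the real difficulty---is extracting constants that do not see $n$. In the first route this is the hypercontractivity estimate on $\Z_\go$, together with checking that no hidden dimensional factor creeps into the degree induction; in the second route it is the dimension-free Remez inequality, for which a coordinate-by-coordinate Lagrange/DFT interpolation is hopeless (it costs a factor of order $(\log\go)^n$) and a genuinely multivariate argument---pairing a Bernstein--Markov-type gradient bound with a clever selection of a grid point near the maximum of $|P|$---is required, with $R(d,\go)$ allowed to be exponential (even $e^{O(d\log d)}$) in $d$ but not to depend on $n$. These are precisely the points addressed in \cite{SVZ,SVZgmp,ksvzITCS,BKSVZremez}; here we only record the statement and use it as a black box.
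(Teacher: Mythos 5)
The paper does not actually prove Theorem~\ref{thm:bh cyclic}: it is quoted verbatim from \cite{SVZ,SVZgmp,ksvzITCS,BKSVZremez} and used as a black box, exactly as you do at the end of your proposal, so at the level of what is written there is nothing to compare. Your second route---extend $f$ to the analytic polynomial $\tilde f$ on $\mathbb{T}^n$, apply $\textnormal{BH}^{\le d}_{\mathbb{T}}$, and close the loop with a dimension-free discrete Remez inequality $\sup_{\mathbb{T}^n}|P|\le R(d,\go)\sup_{\Omega_\go^n}|P|$---is in fact the strategy of the cited works (\cite{SVZgmp,ksvzITCS,BKSVZremez}), and your diagnosis that the crux is keeping $R(d,\go)$ independent of $n$ (coordinatewise Lagrange interpolation losing a factor exponential in $n$) is exactly right; the bound $\textnormal{BH}^{\le d}_{\Omega_\go}\le O(\log \go)^d$ quoted in the paper comes from that Remez constant. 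One caveat on your first route: the assertion that the three ingredients of \cite{DMP} ``survive the move'' verbatim is too optimistic. Blei's inequality and the $d=1$ base case do transfer, but the degree-induction/polarization machinery of \cite{DMP} is built for multilinear functions on $\{-1,1\}^n$, whereas degree-$d$ functions on $\Omega_\go^n$ carry powers $z_j^{\alpha_j}$ with $\alpha_j$ up to $\go-1$, and restricting or differentiating in a coordinate does not interact with the degree filtration in the same way; this obstruction is precisely why the cyclic case remained open (as the paper notes) until the Remez-based argument appeared. So route two should be regarded as the proof, not merely a ``cleaner alternative.''
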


Denote by $\textnormal{BH}_{\Omega_\go}^{\le d}$ the best constant $C(d,\go)$ in \eqref{ineq:cyclic bh}. An upper bound $\textnormal{BH}_{\Omega_\go}^{\le d}\le O(\log K)^d$ was obtained in \cite{ksvzITCS,BKSVZremez}.

\subsection{Applications}

We refer to \cite{VZ23} and references therein for several applications of (noncommutative) Bohnenblust--Hille inequalities in mathematics. Here we focus on applications to learning theory. See Section \ref{sec:learning} for details.



A central motivation for the development of the noncommutative \emph{qubit} BH inequality was to generalize the Eskenazis--Ivanisvili result on learning low-degree functions to quantum setting of learning low-degree observables.
And here it could be argued that the general \emph{qudit} case is even more important, both for the study of fundamental physics via quantum simulation (\emph{e.g.}, \cite{quant-sim,PhysRevLett.129.160501}) and in the operation and validation of quantum computers.
In both contexts, gains in efficiency are possible when the underlying hardware system is composed of higher-dimensional subsystems, sometimes carrying an algorithm from theoretical fact to practical reality in the NISQ era \cite{PhysRevLett.129.160501}---and this benefit may very well remain as quantum computing advances.
Such systems are called \emph{multilevel system}, or \emph{qudit} quantum computers \cite{quditsurvey}.
While the qu\emph{b}it case gives a conceptual sense of the possibilities for learning on qudit systems, it is practically important to derive guarantees and algorithms that work directly in the native dimension of the quantum system.
In so doing we also establish new distributions under which arbitrary quantum processes are well-approximated by low-degree ones (including new distributions for qubit systems beyond those identified in \cite{HCP22}).

\begin{theorem}[Qudit Observable Learning, Informal]
    \label{thm:qudit-learning}
    Let $\cA$ be any (not necessarily low-degree) bounded quantum observable on $\mathcal{H}_\go^{\otimes n}$; \emph{i.e.,} on $n$-many $\go$-level qudits.
    Then we may via random sampling construct in polynomial time an approximate observable $\widetilde{\cA}$ such that for a wide class of distributions $\mu$ on states $\rho$,
    \[\E_{\rho\sim\mu}\big|\tr[\cA\rho]-\tr[\widetilde{\cA}\rho]\big|^2\leq \eps\,.\]
    The samples are of the form $(\rho, \tr[\cA \rho])$ for $\rho$ drawn from the uniform distribution over a certain set of product states.
    Moreover, the number of samples $s$ required to achieve the guarantee with confidence $1-\delta$ is
    \[s= \mathcal{O}\Big(\log(\tfrac{n}{\delta})\,C^{\log^2(1/\epsilon)}(C'\go\|\cA^{\leq t}\|_\mathrm{op})^{2t}\Big)\,.\]
   Here $\cA^{\leq t}$ denotes the truncation of $\cA$ up to degree $t$, for $t$ roughly $\log(1/\epsilon)$.
\end{theorem}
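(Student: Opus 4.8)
The plan is to follow the Eskenazis--Ivanisvili template for low-degree learning, now driven by the qudit Bohnenblust--Hille inequalities (Theorems \ref{thm:GM}, \ref{thm:bh HW prime}, \ref{thm:bh HW nonprime}) rather than the Boolean-cube one. First I would reduce the approximation of an arbitrary bounded observable $\cA$ to approximation of a low-degree truncation. Fix one of the operator bases (say the Heisenberg--Weyl basis, since the statement mentions product states and the error dependence $\go$), write $\cA=\sum_{(\vec\ell,\vec m)}\widehat\cA(\vec\ell,\vec m)\,W_{\vec\ell,\vec m}$ with $W$ the tensor HW word, and consider the degree-$t$ truncation $\cA^{\le t}$. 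A noise-operator / hypercontractivity argument on $\mathbb Z_\go^n$ (the analogue of the Bonami--Beckner bound used in \cite{EI22}) shows that for a suitably-correlated product distribution $\mu$ one has $\E_{\rho\sim\mu}|\tr[(\cA-\cA^{\le t})\rho]|^2\le \|\cA\|_\mathrm{op}^2\,\lambda^{2t}$ for some $\lambda<1$, so taking $t\asymp \log(1/\eps)$ makes the tail contribution at most $\eps/2$. This is exactly where the new product-state distributions on $\mathcal H_\go^{\otimes n}$ enter: the sampling states $\rho$ must be chosen so that $\E_\rho[\tr[W_{\vec\ell,\vec m}\rho]\,\overline{\tr[W_{\vec\ell',\vec m'}\rho]}]$ reproduces the scalar noise kernel on the cyclic group, which one gets by pushing forward the eigenbasis geometry of the HW (or GM) matrices already analyzed in the earlier sections.

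Next I would estimate $\cA^{\le t}$ from samples. Here one uses the low-degree Fourier-coefficient estimator: each sample $(\rho,\tr[\cA\rho])$ gives an unbiased estimate of the relevant coefficients via the orthonormality $\frac1\go\tr[W^\dagger W']=\delta$, and empirical averages over $s$ samples concentrate. The key point — and the reason BH appears — is that to control the total $\ell_2$ error $\sum_{|(\vec\ell,\vec m)|\le t}|\widehat\cA(\vec\ell,\vec m)-\widehat{\widetilde\cA}(\vec\ell,\vec m)|^2$ one needs an $\ell_1$-type bound on the true coefficients, and the BH inequality supplies exactly $\|\widehat{\cA^{\le t}}\|_{2t/(t+1)}\le C(t,\go)\|\cA^{\le t}\|_\mathrm{op}$, hence by Hölder a bound on $\|\widehat{\cA^{\le t}}\|_1$ in terms of the number of low-degree coefficients to the power $\tfrac{t-1}{2t}$. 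Combining with a union bound over the $\binom{n}{\le t}\cdot(\go^2)^t = \mathcal O((\go^2 n)^t)$ low-degree words and the $\eps$-bucketing of small coefficients, one finds that $s=\mathcal O\big(\log(n/\delta)\cdot C(t,\go)^{4}\,\go^{2t}\,\eps^{-2}\big)$ samples suffice; unwinding $t\asymp\log(1/\eps)$ and the sub-exponential-in-$t$ BH constant $C(t,\go)\le (\tfrac32\go^2)^t\,\mathrm{BH}^{\le t}_{\{\pm1\}}\le (C'\go)^{2t}e^{c\sqrt{t\log t}}$ gives the stated form $s=\mathcal O\big(\log(\tfrac n\delta)\,C^{\log^2(1/\eps)}(C'\go\|\cA^{\le t}\|_\mathrm{op})^{2t}\big)$, where the $C^{\log^2(1/\eps)}$ factor absorbs both $e^{c\sqrt{t\log t}}$ and the $\eps^{-2}$.

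The main obstacle I expect is not the learning-theory bookkeeping — that is essentially the argument of \cite{EI22,HCP22} run through the new BH constants — but rather the construction and analysis of the product-state sampling distribution in arbitrary local dimension $\go$. One must exhibit an explicit ensemble of single-qudit states whose induced measure on the HW (resp. GM) Fourier side is a genuine noise kernel with a spectral gap bounded away from $1$ uniformly in $\go$, and verify that sampling and post-processing remain polynomial-time. For the GM basis this should follow from the Hermitian eigenvector geometry used to prove Theorem \ref{thm:GM}; for the HW basis the non-prime case is delicate because there the effective degree inflates from $d$ to $(\go-1)d$ (Theorem \ref{thm:bh HW nonprime}), which is why $t$ in the sample bound must be read as the \emph{inflated} degree and the constant carries the extra $\go$-dependence. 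A secondary technical point is checking that the $\ell_1$-tail of genuinely low-degree but operator-bounded $\cA$ can be controlled \emph{before} truncation error is discarded, i.e. that the truncation and estimation errors can be balanced simultaneously; this is handled exactly as in the Boolean case by first truncating, then learning the truncation, then invoking BH on the truncation rather than on $\cA$ itself.
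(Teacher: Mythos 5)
Your high-level architecture matches the paper's: truncate $\cA$ at degree $t\asymp\log(1/\eps)$, learn the truncation by estimating low-degree Fourier coefficients from samples against product states, and invoke a qudit BH inequality to convert $\ell_\infty$ coefficient error into $L_2$ error. But two of the load-bearing steps are misidentified. First, the truncation bound. You propose a ``noise-operator / hypercontractivity argument on $\mathbb{Z}_\go^n$'' and ask that the \emph{sampling} states reproduce a scalar noise kernel. This conflates the two distributions in play: the sampling distribution (a fixed, explicit uniform mixture over eigenprojection-built product states, used only to estimate coefficients) and the test distribution $\mu$ over which the final mean-squared error is measured. The theorem promises the guarantee for a \emph{wide class} of $\mu$, and the truncation error $\E_{\rho\sim\mu}|\tr[(\cA-\cA^{\le t})\rho]|^2$ must be controlled for every such $\mu$ --- this cannot come from engineering the sampling ensemble. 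The paper's mechanism (Theorem \ref{thm:L2DI-NS} and Corollary \ref{cor:low-degree-approx}) is an operator-level second-moment computation: for any locally 2-design invariant $\mu$, averaging over the local 2-designs and using the \textsf{SWAP}-operator identity for 2-design twirls yields $\E_{\rho\sim\mu}|\tr[\cA\rho]|^2\le\sum_\alpha(\tfrac{\go}{\go^2-1})^{|\alpha|}|\widehat{\cA}(\alpha)|^2$, which is the quantum analogue of the noise-stability bound you are reaching for. Without this (or an equivalent) your argument only certifies the error bound for one specially constructed $\mu$, not the claimed class.

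Second, the estimation step. You pass from the BH bound $\|\widehat{\cA^{\le t}}\|_{2t/(t+1)}\le C(t,\go)\|\cA^{\le t}\|_{\mathrm{op}}$ to an $\ell_1$ bound via H\"older against the number $\mathcal{O}((\go^2n)^t)$ of low-degree words. That factor is $n^{\Theta(t)}$ raised to the power $\tfrac{t-1}{2t}$, i.e.\ polynomial in $n$, and if it enters the sample complexity it destroys the $\log n$ dependence that is the entire point. The correct mechanism is the Eskenazis--Ivanisvili thresholding (Theorem \ref{thm:generic-EI}): the $\ell_{2t/(t+1)}$ bound caps the \emph{number of large coefficients} dimension-freely, so one keeps only empirical coefficients above a threshold $\eta(1+\sqrt{t+1})$ and bounds the discarded mass again by the $\ell_{2t/(t+1)}$ norm; no $\ell_1$ bound and no count of all low-degree words is ever needed. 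Finally, a smaller point: the paper runs the learning algorithm in the Gell-Mann basis (Theorem \ref{thm:qudit-ld-learning}, built on Lemma \ref{action} and Theorem \ref{thm:GM}), which avoids the degree inflation $d\mapsto(\go-1)d$ you worry about in the non-prime Heisenberg--Weyl case; your choice of the HW basis imports that complication unnecessarily.
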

\noindent There is little prior work on learning qudit observables, but earlier polytime algorithms for learning qubits required at least $\mathcal{O}(n\log(n))$ samples to complete a comparable task \cite{Huang2020}.
Theorem \ref{thm:qudit-learning} is proved by combining a low-degree learning algorithm for qudits with a qudit extension of the low-degree approximation lemma of Huang, Chen and Preskill \cite[Lemma 14]{HCP22}.
The distributions $\mu$ admitting this construction are studied in Section \ref{sec:learning-arb}.

\bigskip 

\noindent\textbf{Notation.} In what follows $\langle\cdot,\cdot\rangle$ denotes the inner product on $\C^n$ that is linear in the second argument. For any operator $A$, we denote by $A^\dagger$ the adjoint of $A$ with respect to $\langle\cdot,\cdot\rangle$.

\section{Qudit Bohnenblust--Hille in the Gell-Mann basis}
\label{sec:bh GM}

In this section we prove Theorem \ref{thm:GM} by reducing \eqref{ineq:bh GM} to the Boolean cube Bohnenblust--Hille inequality on $\{-1, 1\}^{n (\go^2-1)}$.
The $\go=2$ case was done in \cite{VZ23}.

The central part of the reduction is a coordinate-wise construction of  density matrices $\rho(\boldsymbol{x})\in M_\go(\C)$ parametrized by $\boldsymbol{x}\in \{-1,1\}^{\go^2-1}=:H_\go$.
It will be convenient to partition the coordinates of
$\boldsymbol{x}$ as 
$$\boldsymbol{x}=(x,y,z)\in \{-1, 1\}^{\binom{\go}{2}}\times \{-1, 1\}^{\binom{\go}{2}}\times \{-1, 1\}^{\go-1}$$
with indices
\begin{align*}
	x=(x_{jk})_{1\le j<k\le \go}, \qquad y=(y_{jk})_{1\le j<k\le \go},\qquad 
 \text{and} \qquad z=(z_m)_{1\le m\le \go-1}\,.
\end{align*}

\begin{lemma}
	\label{action}
	For any $(x,y,z)\in H_\go$, there exists a density matrix $\rho=\rho(x,y,z)$ such that for all $1\le j<k\le \go$ and $1\le m \le \go-1$,
	\begin{align}
    \label{action-xs}
	\tr [\bA_{jk} \rho(x, y, z)] &= {\textstyle \frac13\binom{\go}{2}^{-1}\sqrt{\!\frac{\go}{2}}}x_{jk},\\
    \label{action-ys}
	\tr [\bB_{jk} \rho(x, y, z)] &= {\textstyle \frac13\binom{\go}{2}^{-1}\sqrt{\!\frac{\go}{2}}}y_{jk},\\
    \label{action-zs}
	\tr [\bC_{m} \rho(x, y, z)] &= {\textstyle \frac13\binom{\go}{2}^{-1}\sqrt{\!\frac{\go}{2}}}z_{m}.
	\end{align}
\end{lemma}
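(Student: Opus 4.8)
The goal is, for each Pauli-type Gell-Mann generator, to produce a density matrix whose trace-pairing with that generator is proportional to a prescribed sign. The natural strategy is to build $\rho(x,y,z)$ as a convex combination (average) of many simple density matrices, each of which "activates" exactly one coordinate. Concretely, for each pair $1\le j<k\le\go$ I would pick a rank-one (or low-rank) density matrix $\rho^{A}_{jk}(x_{jk})$ living essentially in the two-dimensional span of $\ket j,\ket k$ such that $\tr[\bA_{jk}\rho^A_{jk}]$ is a positive multiple of $x_{jk}$ while $\tr[M\,\rho^A_{jk}]=0$ for every \emph{other} generator $M\in\mathrm{GM}(\go)\setminus\{\un\}$. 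The key geometric fact making this possible is that $\bA_{jk}$, viewed inside the $2\times2$ block indexed by $j,k$, is just (a scalar multiple of) the Pauli $\sigma_1$; so a density matrix of the form $\tfrac12(\ket j+ s\ket k)(\bra j + s\bra k)$ with $s=x_{jk}\in\{-1,1\}$ pairs nontrivially only with that $\sigma_1$-like generator. One must check it is orthogonal (in the trace inner product) to $\bB_{j'k'}$, $\bA_{j'k'}$ for $(j',k')\ne(j,k)$, and all the diagonal $\bC_m$: orthogonality to off-block generators is automatic from disjoint support, orthogonality to $\bB_{jk}$ (the $\sigma_2$-like matrix in the same block) is because the chosen state is real and $\bB_{jk}$ is purely imaginary, and orthogonality to the diagonal $\bC_m$ holds because the $(j,j)$ and $(k,k)$ diagonal entries of this state are equal (both $\tfrac12$) — so pairing against any diagonal traceless matrix that is constant on $\{j,k\}$... here one has to be slightly more careful when $k=m+1$, so I'd instead symmetrize: average $\tfrac12(\ket j + s\ket k)(\cdots)$ with $\tfrac12(\ket j - s\ket k)(\cdots)$ after first noting we only need the net pairing, or more cleanly use the mixed state supported on $\mathrm{span}\{\ket j,\ket k\}$ with equal diagonal and off-diagonal entry $\tfrac{s}{2}$, i.e. $\rho^A_{jk}=\tfrac12(E_{jj}+E_{kk}) + \tfrac{s}{2}(E_{jk}+E_{kj})\cdot(\text{small factor})$, ensuring positivity. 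The $\bB_{jk}$ case is identical with $E_{jk}+E_{kj}$ replaced by $-iE_{jk}+iE_{kj}$, giving a state with imaginary off-diagonal.

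For the diagonal generators $\bC_m$ I would use a diagonal density matrix: something like a small perturbation of the maximally mixed state, $\rho^C_m(z_m) = \tfrac1\go\un + c\, z_m\, \bC_m$ for a sufficiently small constant $c>0$ chosen so that $\rho^C_m\succeq 0$ (possible since $\un/\go$ is in the interior of the state space and $\bC_m$ is traceless Hermitian). Then $\tr[\bC_m\rho^C_m] = c\,z_m\,\tr[\bC_m^2] = c\,z_m\,\go$ (using the normalization of the Gell-Mann matrices), and $\tr[\bC_{m'}\rho^C_m]=0$ for $m'\ne m$ because the diagonal Gell-Mann matrices are mutually orthogonal, and $\tr[\bA_{jk}\rho^C_m]=\tr[\bB_{jk}\rho^C_m]=0$ since those are off-diagonal while $\rho^C_m$ is diagonal.

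Finally I would set
\[
\rho(x,y,z) = \frac{1}{\binom{\go}{2}+\binom{\go}{2}+(\go-1)}\Big(\sum_{j<k}\rho^A_{jk}(x_{jk}) + \sum_{j<k}\rho^B_{jk}(y_{jk}) + \sum_{m}\rho^C_m(z_m)\Big),
\]
which is a genuine density matrix (convex combination of density matrices), and by linearity of the trace each pairing picks out exactly the one term that is nonorthogonal to the given generator, producing $\tr[\bA_{jk}\rho] = \tfrac{1}{\#}\cdot(\text{const})\cdot x_{jk}$ and similarly for $\bB,\bC$; adjusting the per-coordinate constants $c$ to a common value (take the smallest one) and recording the count $\# = 2\binom\go2 + \go-1 = \go^2-1$ together with the normalization $\sqrt{\go/2}$ of $\bA_{jk},\bB_{jk}$ yields the stated prefactor $\tfrac13\binom{\go}{2}^{-1}\sqrt{\go/2}$ — the factor $3$ presumably coming from insisting on a uniform small constant across the three families rather than the optimal one. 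The main obstacle, and the one place requiring genuine care rather than bookkeeping, is ensuring each elementary $\rho^A_{jk},\rho^B_{jk}$ is simultaneously (i) positive semidefinite with unit trace and (ii) \emph{exactly} orthogonal to the potentially-overlapping diagonal generator $\bC_m$ when $\{j,k\}\cap\{m,m+1\}\ne\emptyset$; the clean fix is to keep the diagonal of $\rho^A_{jk}$ equal to $\tfrac12(E_{jj}+E_{kk})$ so that it pairs to zero against \emph{every} traceless matrix whose restriction to $\{j,k\}$ is a multiple of the identity — and for the one diagonal generator $\bC_m$ with $m\in\{j,k\}$ whose $\{j,k\}$-restriction is \emph{not} a multiple of the identity, one checks the specific numbers $\renorm_m\cdot(1)$ vs $\renorm_m\cdot(-m)$ against the $\tfrac12,\tfrac12$ diagonal, which can be forced to vanish by instead splitting $\rho^A_{jk}$ as the average over the two sign choices of $\ket j\pm\ket k$ does not help the diagonal, so one genuinely uses that we are free to choose which of $\{j,k\}$ plays which role, or absorbs the resulting rank-one correction into the $\bC$ terms. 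In any case the computation is finite-dimensional and elementary once the orthogonality pattern is laid out.
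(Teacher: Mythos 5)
Your overall architecture---a convex combination of elementary states, one per generator, with orthogonality in the trace inner product doing the bookkeeping---is the same as the paper's, but there is a genuine gap at exactly the point you flag: the pairing of the off-diagonal states $\rho^A_{jk},\rho^B_{jk}$ with the diagonal generators $\bC_m$. Your claimed mechanism is backwards. A state whose diagonal part is $\tfrac12(E_{jj}+E_{kk})$ pairs with a diagonal matrix $D$ to give $\tfrac12(D_{jj}+D_{kk})$, which vanishes when $D_{jj}=-D_{kk}$, \emph{not} when $D$ is constant on $\{j,k\}$. For $\bC_m$ with $j<k\le m$ both relevant diagonal entries equal $\Gamma_m$, so the pairing is $\Gamma_m\neq 0$, and for generic $(j,k,m)$ it is likewise nonzero. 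Consequently no choice of signs, of which index ``plays which role,'' or of averaging over $\ket{j}\pm\ket{k}$ makes an \emph{individual} $\rho^A_{jk}$ orthogonal to all the $\bC_m$, and your closing paragraph never actually lands on a fix. The paper's resolution is to abandon individual orthogonality and use a global cancellation: the diagonal part of $\sum_{j<k}\ketbra{\al^{(x_{jk})}_{jk}}{\al^{(x_{jk})}_{jk}}$ equals $\tfrac{\go-1}{2}\un$ \emph{independently of the signs} $x_{jk}$ (each index belongs to exactly $\go-1$ pairs), and since $\tr[\bC_m]=0$ the entire sum pairs to zero with every $\bC_m$. This single cancellation is the only non-routine step in the lemma, and it is absent from your argument.

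Two further remarks. First, the device you correctly use for the diagonal family---perturbing the maximally mixed state, $\tfrac1\go\un+c\,z_m\bC_m$ with $c$ small---would also have repaired the off-diagonal families: $\tfrac1\go\un+c\,x_{jk}\bA_{jk}$ is positive semidefinite for small $c$ and is orthogonal to every other element of $\mathrm{GM}(\go)$ simply because the basis is orthogonal and traceless; applied uniformly this yields a correct, if quantitatively different, proof. Second, your normalization does not reproduce the stated prefactor: you divide by $\#=2\binom{\go}{2}+(\go-1)=\go^2-1$, whereas the lemma asserts $\tfrac13\binom{\go}{2}^{-1}\sqrt{\go/2}$ and $3\binom{\go}{2}\neq\go^2-1$; since Theorem \ref{thm:GM} imports this constant verbatim, the arithmetic cannot be left at ``presumably.''
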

\begin{proof}

For $b\in\{-1,1\}$ and $1\le j<k\le \go$ consider unit vectors
\[
\al_{jk}^{(b)} = (e_j+be_k)/\sqrt{2}, \qquad \beta_{jk}^{(b)}=(e_j+b i e_k)/\sqrt{2}\,.
\]
These are respectively eigenvectors of $\bA_{jk}$ and $\bB_{jk}$ corresponding to the eigenvalue $b\sqrt{\!\go/2\,}$. 
Now consider the density matrices, again for $b\in\{-1,1\}$ and $1\le j<k\le \go$,
\[
A_{jk}^{(b)}= \ketbra{\al_{jk}^{(b)}}{\al_{jk}^{(b)}} ,\qquad B_{jk}^{(b)}= \ketbra{\beta_{jk}^{(b)}}{\beta_{jk}^{(b)}}.
\]
Finally, define
\[
\rho=\rho(x, y, z) =\frac13\binom{\go}{2}^{-1}\left(\sum_{1\le j<k\le \go}  A_{jk}^{(x_{jk})} + \sum_{1\le j<k\le \go} B_{jk}^{(y_{jk})} + \sum_{m=1}^{\go-1}  \tfrac{z_m}{\sqrt{2\go}}\bC_m+ \tfrac{\go-1}{2}  \un\right)\,.
\]
Observe $\rho$ is a positive semi-definite Hermitian matrix: each $A_{jk}^{(x_{jk})},B_{jk}^{(y_{jk})}$ are positive semi-definite Hermitian and the remaining summands form a diagonal matrix with positive entries.
Also, we have 
\begin{equation}
	\label{trace-rho}
	\tr \,\rho = \frac13\binom{\go}{2}^{-1}\left(\frac{\go(\go-1)}{2}+\frac{\go(\go-1)}{2}+0+\frac{\go(\go-1)}{2}\right)=1\,.
\end{equation}

By definition, we have 
\begin{equation*}
\tr(\bA_{jk} A_{jk}^{(x_{jk})})=\sqrt{\frac{\go}{2}}x_{jk},\qquad \tr(\bB_{jk} A_{jk}^{(y_{jk})})=\sqrt{\frac{\go}{2}}y_{jk}.
\end{equation*}
	Note for any $1\leq j<k\leq \go$ the anti-commutative relationship
	$\bA_{jk} \bB_{jk} + \bB_{jk}\bA_{jk}=0.$
    This implies that (see for example \cite[Lemma 2.1]{VZ23} or Lemma \ref{lem:orthogonal} below) for any $b\in\{-1,1\}$,
    $
	\braket{ \beta_{jk}^{(b)},\bA_{jk}\beta_{jk}^{(b)}} = \braket{ \alpha_{jk}^{(b)},\bB_{jk}\alpha_{jk}^{(b)}}=0,
    $
    and thus
    \begin{equation*}
    \tr[\bA_{jk} B_{jk}^{(b)}]=\tr[\bB_{jk} A_{jk}^{(b)}]=0\,.
    \end{equation*}
	When $(j, k)\neq (j', k')$ then the operators ``miss'' each other and we get for all $b\in\{-1,1\}$ 
	\begin{equation*}
		\tr[\bA_{jk} B_{j'k'}^{(b)}]=\tr[\bB_{jk} A_{j'k'}^{(b)}]=\tr[\bA_{jk} A_{j'k'}^{(b)}]=\tr[\bB_{jk} B_{j'k'}^{(b)}]=0.
	\end{equation*}
    By orthogonality the remaining summands in $\rho$ contribute $0$ to $\tr(\bA_{jk}\rho),\tr(\bB_{jk}\rho)$.
    We conclude \eqref{action-xs} and \eqref{action-ys} hold.
    
	So far all follows more or less the path of \cite{VZ23}. 
	Now we claim
	\begin{equation}
		\label{eq:claim}
		\tr \bigg[\bC_m \Big(\sum_{1\le j<k\le \go}    A_{jk}^{(x_{jk})}+    B_{jk}^{(y_{jk})}\Big)\bigg]=0.
	\end{equation}
	 To see this note that $\bC_m$ is diagonal, so we only need to consider the diagonal part of the summation in \eqref{eq:claim}. By definition, the diagonal parts of $\sum_{1\le j<k\le \go}    A_{jk}^{(x_{jk})}$ and  $\sum_{1\le j<k\le \go}    B_{jk}^{(y_{jk})}$ are both 
    $$\sum_{1\le j<k\le \go}\frac{1}{2}(e_j+e_k)=\frac{\go-1}{2}\un$$
    regardless the choices of $(x_{jk})$ and $(y_{jk})$. 
    So
	\begin{align*}
\tr \bigg[\bC_m \Big(\sum_{1\le j<k\le \go}A_{jk}^{(x_{jk})}+B_{jk}^{(y_{jk})}\Big)\bigg]
&=(\go-1)\tr[\bC_m]=0,
\end{align*}
which finishes the proof of the claim \eqref{eq:claim}.

Now consider the rest of $\rho(x, y, z)$ which is $\sum_{m=1}^{\go-1}  \frac{z_m}{\sqrt{2\go}}\bC_m + \frac{\go-1}{2}\un$. By orthogonality, we have 
\begin{equation*}
\tr\left[\bC_m \left(\sum_{m=1}^{\go-1}  \frac{z_m}{\sqrt{2\go}}\bC_m + \frac{\go-1}{2}\un\right)\right]
=\frac{z_m}{\sqrt{2\go}}\tr[\bC_m^2]
=\frac{z_m}{\sqrt{2\go}}\Gamma_m^2 m(m+1)
=\sqrt{\frac{\go}{2}}z_m.
\end{equation*}
All combined, we complete the proof of \eqref{action-zs}.
	\end{proof}

Now we are ready to prove Theorem \ref{thm:GM}.

\begin{proof}[Proof of Theorem \ref{thm:GM}]
Choose any $(\vec x, \vec y, \vec z) \in H_\go^n$ with 
\begin{equation*}
	\vec x=\big(x^{(1)},\dots, x^{(n)}\big), \qquad
	\vec y=\big(y^{(1)},\dots, y^{(n)}\big), \qquad
	\vec z=\big(z^{(1)},\dots, z^{(n)}\big), 
\end{equation*}
and $\big(x^{(j)},y^{(j)},z^{(j)}\big)\in H_\go$ for each $1\le j\le n$.
We can consider a tensor of $r$ states
\[
\rho(\vec x, \vec y, \vec z) := \rho\big(x^{(1)}, y^{(1)}, z^{(1)}\big) \otimes  \rho\big(x^{(2)}, y^{(2)}, z^{(2)}\big) \otimes\dots \otimes \rho\big(x^{(n)}, y^{(n)}, z^{(n)}\big)\,.
\]
Recall that any GM observable $\cA$ of degree at most $d$ has the unique expansion
\begin{equation*}
	\cA=\sum_{\al=(\al_1,\dots, \al_n)\in \Lambda_\go^n}\widehat{\cA}_\al M_{\al_1}\otimes \cdots \otimes M_{\al_n}
\end{equation*}
where $\{M_\al \}_{\al\in  \Lambda_\go}=\textnormal{GM}(\go)$ and $\widehat{\cA}_\al=0$ if more than $d$ matrices of $M_{\al_j}, 1\le j\le n$ are not identity matrices.

By Lemma \ref{action}, for any $\al=(\al_1,\dots, \al_n)\in \Lambda_\go^n$ with $|\alpha|=:\kappa\le d$, the map
\begin{equation*}
	(\vec x,\vec y,\vec z)\mapsto \tr \left[{\textstyle\bigotimes_{j=1}^n M_{\al_1}}\cdot \rho(\vec x, \vec y, \vec z)\right]
\end{equation*}
is a multi-linear monomial of degree $\kappa\le d$ on the Boolean cube $H_\go^n = \{-1, 1\}^{n(\go^2-1)} $ with coefficient 
\[
\left(\frac{\sqrt{\go/2}}{3\binom{\go}{2}}\right)^\kappa\ge\big(\tfrac32(\go^2-\go)\big)^{-d} .
\]
Moreover, for distinct $\alpha$'s, the corresponding classical monomials are different. So for a GM observable $\cA$ of degree at most $d$, the function 
\begin{equation}
f_{\cA}(\vec x,\vec y,\vec z):=\tr\left[\cA\cdot \rho(\vec x, \vec y, \vec z)\right]
\end{equation}
defined on $ \{-1, 1\}^{n(\go^2-1)} $ is of degree at most $d$, and 
\begin{equation}
\|\widehat{\cA}\|_p\le \big(\tfrac32(\go^2-\go)\big)^d \|\widehat{f_\cA}\|_p, \qquad p> 0.
\end{equation}
By Theorem \ref{thm:boolean bh} one has
\begin{equation*}
 \|\widehat{f_\cA}\|_{\frac{2d}{d+1}}\le  \textnormal{BH}^{\le d}_{\{\pm 1\}}\sup_{(\vec x,\vec y,\vec z)\in H_\go^n}|\tr(\cA\cdot \rho(\vec x, \vec y, \vec z)|\, .
\end{equation*}
Since $\rho(\vec x, \vec y, \vec z)$ is a density matrix, we have
\[
\sup_{(\vec x,\vec y,\vec z)\in H_\go^n}|\tr[\cA\cdot \rho(\vec x, \vec y, \vec z)]|\le \|\cA\|_{\textnormal{op}}\,.
\]
All combined, we get 
\begin{equation*}
\|\widehat{\cA}\|_{\frac{2d}{d+1}} \le \big(\tfrac32(\go^2-\go)\big)^d \textnormal{BH}^{\le d}_{\{\pm 1\}} \|\cA\|_{\textnormal{op}}\,.
\qedhere
\end{equation*}
\end{proof}

\section{Qudit Bohnenblust--Hille in the Heisenberg--Weyl basis}
\label{sec:hw-bh}

In this section we prove Theorems \ref{thm:bh HW prime} and \ref{thm:bh HW nonprime} via reduction to the BH inequality for cyclic groups, Theorem \ref{thm:bh cyclic}.
We collect first a few facts about the Heisenberg--Weyl basis $\{X^\ell Z^m\}$.

Fix $\go \ge 3$. Recall that $\gcd(a,b)$ denotes the 
greatest common divisor of $a$ and $b$. For $(\ell,m)\in \Z_{\go}\times \Z_{\go}$, $\gcd(\ell,m)$ is understood as when $\ell,m\in \{1,2,\dots, \go\}$, i.e. we do not mod $\go$ freely here. For example if $\go=6$, then $\gcd(0,2)$ is understood as $\gcd(6,2)=2.$  
For a group $G$, we use the convention that $\langle g\rangle$ is the abelian subgroup generated by $g\in G$. So for any $(\ell,m)\in \Z_{\go}\times \Z_{\go}$, we have 
\begin{equation}
\langle (\ell,m)\rangle=\{(k\ell,km):k\in \Z_{\go}\}.
\end{equation} 

In the sequel, we denote $\omega=\omega_{\go}=e^{2\pi i/\go}$ and use the notation $\omega^{1/2}:=\omega_{2\go}=e^{\pi i/\go}$.

\begin{lemma}\label{lem:HW basis general}
	We have the following:
	\begin{enumerate}
		\item $\{X^\ell Z^m: \ell, m\in \mathbb{Z}_\go\}$ form a basis of $M_\go(\C)$.
\item For all $k,\ell,m\in \mathbb{Z}_\go$:
		\begin{equation*}
			(X^\ell Z^m)^k=\om^{\frac{1}{2}k(k-1)\ell m}X^{k\ell} Z^{km}
		\end{equation*}
		and for all $\ell_1,\ell_2,m_1,m_2\in\mathbb{Z}_\go$:
		\begin{equation*}
			X^{\ell_1}Z^{m_1}\cdot X^{\ell_2}Z^{m_2}=\om^{\ell_2 m_1-\ell_1 m_2} X^{\ell_2}Z^{m_2}\cdot X^{\ell_1}Z^{m_1}.
		\end{equation*}
\item If $\gcd(\ell_1,m_1)=1$ and $(\ell,m)\notin \langle(\ell_1,m_1)\rangle $, then
\begin{equation}
X^{\ell_1}Z^{m_1}\cdot X^{\ell}Z^{m}
= \omega^{\ell m_1-\ell_1 m}X^{\ell}Z^{m}\cdot X^{\ell_1}Z^{m_1}
\end{equation}
with $\omega^{\ell m_1-\ell_1 m}\neq 1$.

\item If $\gcd(\ell,m)=1$, then the set of eigenvalues of $X^\ell Z^m$ is either $\Omega_\go$ or $\Omega_{2\go}\setminus \Omega_{\go}$.
	\end{enumerate}
\end{lemma}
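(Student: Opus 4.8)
The plan is to prove the four claims of Lemma \ref{lem:HW basis general} in order, as (1) and (2) are the foundations for (3) and (4).

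For (1), I would count: there are $\go^2$ pairs $(\ell,m)\in\Z_\go\times\Z_\go$ and $\dim M_\go(\C)=\go^2$, so it suffices to show the $X^\ell Z^m$ are linearly independent. The cleanest route is orthogonality under the normalized trace inner product: since $Z$ is diagonal with distinct powers of $\om$ on the diagonal and $X$ is the cyclic shift, $X^\ell Z^m$ has nonzero entries only on the ``$\ell$-th diagonal,'' and $\tr[(X^\ell Z^m)^\dagger X^{\ell'}Z^{m'}]=\tr[Z^{-m}X^{-\ell}X^{\ell'}Z^{m'}]$ vanishes unless $\ell=\ell'$, in which case it reduces to $\tr[Z^{m'-m}]=\go\cdot\mathbbm{1}[m=m']$ because the powers of $\om$ sum to zero otherwise. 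Hence the family is orthogonal, in particular independent, and is a basis.

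For (2), both identities are direct computations from the defining relation $ZX=\om XZ$ (equivalently $Z^mX^\ell=\om^{\ell m}X^\ell Z^m$, proved by induction on $\ell$ and $m$). The first identity $(X^\ell Z^m)^k=\om^{\frac12 k(k-1)\ell m}X^{k\ell}Z^{km}$ follows by induction on $k$: moving the rightmost $Z^m$ block leftward past $X^{(k-1)\ell}$ contributes a factor $\om^{(k-1)\ell m}$, and $\sum_{j=0}^{k-1}j=\binom k2$ gives the exponent. The second, $X^{\ell_1}Z^{m_1}X^{\ell_2}Z^{m_2}=\om^{\ell_2 m_1-\ell_1 m_2}X^{\ell_2}Z^{m_2}X^{\ell_1}Z^{m_1}$, follows by commuting $Z^{m_1}$ past $X^{\ell_2}$ (factor $\om^{\ell_2 m_1}$) on the left side, commuting $Z^{m_2}$ past $X^{\ell_1}$ (factor $\om^{\ell_1 m_2}$) on the right side, and noting both sides then equal a common reordering up to these scalars.

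For (3), the commutation relation is just the specialization of the second identity in (2), so the only content is that $\om^{\ell m_1-\ell_1 m}\neq 1$, i.e. $\ell m_1-\ell_1 m\not\equiv 0\pmod\go$. Suppose instead $\ell m_1\equiv \ell_1 m\pmod \go$. Since $\gcd(\ell_1,m_1)=1$, there exist integers $a,b$ with $a\ell_1+bm_1=1$; I would multiply the congruence appropriately to solve for $(\ell,m)$ as a multiple of $(\ell_1,m_1)$ mod $\go$. Concretely, from $\ell m_1\equiv\ell_1 m$ one gets $\ell\cdot(a\ell_1+bm_1)\equiv a\ell_1\ell+b\ell_1 m=\ell_1(a\ell+bm)$, so $\ell\equiv\ell_1 k$ with $k:=a\ell+bm$, and similarly $m\equiv m_1 k$; hence $(\ell,m)\in\langle(\ell_1,m_1)\rangle$, contradicting the hypothesis. (Some care is needed with the convention in the excerpt that $\gcd$ is taken with representatives in $\{1,\dots,\go\}$ rather than reducing mod $\go$, but the congruence argument is insensitive to the choice of representative.)

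For (4), assume $\gcd(\ell,m)=1$. By (2), $(X^\ell Z^m)^\go=\om^{\frac12\go(\go-1)\ell m}X^{\go\ell}Z^{\go m}=\om^{\frac12\go(\go-1)\ell m}\un$ since $X^\go=Z^\go=\un$. Now $\om^{\go(\go-1)/2}$ equals $1$ if $\go$ is odd and $-1$ if $\go$ is even (as $(\go-1)/2$ is an integer in the first case, and $\om^{\go/2}=e^{\pi i}=-1$ raised to the odd power $\go-1$ in the second); and $(\ell m)$ does not change this when $\go$ is odd, while when $\go$ is even $\gcd(\ell,m)=1$ forces at least one of $\ell,m$ odd — here I need that actually $\ell m$ can still be even, so more care is required: the exponent $\frac12\go(\go-1)\ell m$ mod $\go$ is what matters, and one computes $(X^\ell Z^m)^\go=\pm\un$ with the sign depending only on the parity of $\go$ and of $\ell m$. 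In all cases $(X^\ell Z^m)^{2\go}=\un$, so every eigenvalue $\lambda$ satisfies $\lambda^{2\go}=1$, i.e. $\lambda\in\Omega_{2\go}$. Moreover $X^\ell Z^m$ is a product of unitaries hence unitary and has determinant a root of unity; I would pin down which coset by computing $\det(X^\ell Z^m)$ or, more robustly, by showing the $\go$ eigenvalues are distinct and closed under multiplication by $\om$ (because conjugating $X^\ell Z^m$ by a suitable clock/shift operator multiplies it by $\om$, using the commutation relation in (2) together with $\gcd(\ell,m)=1$ to produce such a conjugator). Distinctness plus $\om$-invariance of the spectrum forces the spectrum to be a full coset of $\Omega_\go$ inside $\Omega_{2\go}$, i.e. either $\Omega_\go$ or $\Omega_{2\go}\setminus\Omega_\go$.

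The main obstacle I anticipate is part (4): correctly tracking the half-integer phase $\om^{1/2}=\om_{2\go}$ and determining the exact coset, especially disentangling the parity of $\go$ from that of $\ell m$. The spectral structure argument — using the commutation relations of (2) to exhibit a unitary $U$ with $U(X^\ell Z^m)U^\dagger=\om\,X^\ell Z^m$, which requires $\gcd(\ell,m)=1$ so that $(\ell,m)$ generates a complement making the relevant symplectic pairing nondegenerate — is the crux, and handling the non-prime case (where not every nonzero vector is a unit) is exactly where this subtlety bites.
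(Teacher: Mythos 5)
Your proposal is correct and follows essentially the same route as the paper: the commutation relations from $ZX=\om XZ$ for (2), the B\'ezout-identity manipulation for (3), and for (4) the observation that $(X^\ell Z^m)^{2\go}=\un$ together with a conjugator $X^\beta Z^{-\alpha}$ (with $\alpha\ell+\beta m=1$) that shifts the spectrum by $\om$, forcing it to be a single $\om$-orbit in $\Omega_{2\go}$. The only cosmetic difference is in (1), where you use trace orthogonality of the $X^\ell Z^m$ while the paper tests against matrix units and invokes a Vandermonde determinant; and note the worry you raise at the end about pinning down \emph{which} coset is moot, since the lemma only asserts the spectrum is one of the two.
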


\begin{proof}
	\begin{enumerate}
		\item Suppose that $\sum_{\ell,m}a_{\ell,m}X^\ell Z^m=0$. For any $j,k\in \mathbb{Z}_\go$, we have
		\begin{equation*}
			\sum_{\ell,m} a_{\ell,m}	\langle X^\ell Z^m e_j,e_{j+k}\rangle 
			=\sum_{m} a_{k,m}	\om^{jm}
			=0.
		\end{equation*}
		Since the Vandermonde matrix associated to $(1,\om,\dots,\om^{\go-1})$ is invertible, we have $a_{k,m}=0$ for all $k,m\in \mathbb{Z}_\go$.

\item It follows immediately from the identity $ZX=\om XZ$ which can be verified directly: for all $j\in \mathbb{Z}_\go$
		\begin{equation*}
			ZXe_j= Ze_{j+1}=\om^{j+1}e_{j+1}=\om^{j+1} X e_{j}=\om XZ e_{j}.
		\end{equation*}
		\item It is a direct consequence of (2) and the following fact:
for $(\ell_1, m_1)\in\Z_{\go} \times \Z_{\go}$ such that $\gcd(\ell_1, m_1)=1$ and $(\ell_2,m_2)\in \Z_{\go} \times \Z_{\go}$, we have $\ell_1 m_2-\ell_2 m_1 \equiv 0\mod \go$ if and only if $(\ell_2, m_2)\equiv (k \ell_1, k m_1)\mod \go$ for some $k\in \Z_{\go}$.

The ``if" direction is obvious. To show the ``only if" part, recall that by B\'ezout's lemma, there exist integers $\alpha$ and $\beta$ such that $\alpha \ell_1+\beta m_1=\gcd(\ell_1,m_1)=1$. Take $k\equiv \alpha \ell_2+\beta m_2 \mod \go$. Then 
\begin{equation}
\ell_2=\ell_2 (\alpha \ell_1+\beta m_1)\equiv \alpha\ell_1\ell_2+\beta \ell_1 m_2
\equiv k\ell_1 \mod \go,
\end{equation}
where we used $\ell_1 m_2\equiv \ell_2 m_1 \mod \go$. Similarly, 
\begin{equation}
m_2=m_2 (\alpha \ell_1+\beta m_1)\equiv \alpha\ell_2 m_1+\beta m_1 m_2
\equiv km_1 \mod \go,
\end{equation}
as desired. This finishes the proof of the fact.
\item By (2), we have 
$$
(X^{\ell}Z^m)^{2\go}=\omega^{\go(2\go-1)\ell m}X^{2\ell \go}Z^{2m\go}=\un.
$$		
So the eigenvalues of $X^{\ell}Z^m$ must be roots of unit of order $2\go$. Then the proof is finished as soon as we prove the following claim: for $\gcd(\ell,m)=1$, if $\lambda$ is an eigenvalue of $X^\ell Z^m$, then so is $\omega \lambda$. To prove the claim, recall that by B\'ezout's lemma, there exist integers $\alpha$ and $\beta$ such that $\alpha\ell+\beta m=\gcd(\ell,m)=1$. By (2), we get
\begin{equation}
X^\ell Z^m X^{\beta} Z^{-\alpha}=\omega^{\alpha \ell +\beta m}X^{\beta} Z^{-\alpha} X^\ell Z^m=\omega X^{\beta} Z^{-\alpha} X^\ell Z^m.
\end{equation}
Suppose $\vec{0}\neq \xi$ is an eigenvector of $X^\ell Z^m$ with eigenvalue $\lambda$. Then 
\begin{equation}
X^\ell Z^m X^{\beta} Z^{-\alpha}\xi
=\omega^{\alpha \ell +\beta m}X^{\beta} Z^{-\alpha} X^\ell Z^m\xi
=\omega \lambda  X^{\beta}Z^{-\alpha} \xi,
\end{equation}
implying that $X^{\beta}Z^{-\alpha} \xi$ (non-zero since  $X^{\beta}Z^{-\alpha}$ is invertible) is an eigenvector of $X^\ell Z^m$ with eigenvalue $\omega \lambda$. This finishes the proof of the claim. \qedhere
\end{enumerate}
\end{proof}

Let us record the following observation as a lemma. 

\begin{lemma}\label{lem:orthogonal}
	Suppose that $k\ge 1$, $A,B$ are two unitary matrices such that $B^k=\un$, $AB=\lambda BA$ with $\lambda\in\C$ and $\lambda\ne 1$. 
	If $\xi\neq \vec{0}$ is an eigenvector of $B$ with eigenvalue $\mu$ ($\mu\ne 0$ since $\mu^k=1$), then 
	\begin{equation*}
		\langle \xi,A\xi\rangle=0.
	\end{equation*}
\end{lemma}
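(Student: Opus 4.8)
The plan is to exploit the eigenvector structure directly. Since $\xi$ is an eigenvector of $B$ with eigenvalue $\mu$, I want to show $\langle\xi, A\xi\rangle = 0$, and the natural idea is to compute this quantity in two ways using the commutation relation $AB = \lambda BA$, deriving a contradiction unless the inner product vanishes.

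First I would observe that $A\xi$ can be related to eigenvectors of $B$: from $AB = \lambda BA$ we get $BA = \lambda^{-1}AB$, so $BA\xi = \lambda^{-1}AB\xi = \lambda^{-1}\mu A\xi$. Thus $A\xi$ is an eigenvector of $B$ with eigenvalue $\lambda^{-1}\mu$ (it is nonzero since $A$ is unitary, hence invertible). Now since $B$ is unitary, eigenvectors corresponding to distinct eigenvalues are orthogonal. So it suffices to check that $\mu \neq \lambda^{-1}\mu$, which is immediate: $\mu \neq 0$ (as $\mu^k = 1$), so $\mu = \lambda^{-1}\mu$ would force $\lambda = 1$, contradicting the hypothesis $\lambda \neq 1$. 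Therefore $\langle \xi, A\xi\rangle = 0$.

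Alternatively — and this avoids invoking normality of $B$ explicitly — one can argue purely algebraically: $\mu\langle\xi, A\xi\rangle = \langle\xi, A(\mu\xi)\rangle$. Wait, one must be careful which slot $B$ acts in; since $\langle\cdot,\cdot\rangle$ is linear in the second argument and $B$ is unitary, $\langle\xi, A\xi\rangle = \langle B^\dagger B\xi, A\xi\rangle = \langle B\xi, BA\xi\rangle = \langle \mu\xi, \lambda^{-1}\mu A\xi\rangle = \bar\mu \lambda^{-1}\mu\langle\xi, A\xi\rangle = |\mu|^2\lambda^{-1}\langle\xi,A\xi\rangle$. Since $\mu^k = 1$ forces $|\mu| = 1$ (because $B$ unitary means $|\mu|=1$, or directly $|\mu|^k=1$ with $|\mu|>0$ real gives $|\mu|=1$), this reads $\langle\xi, A\xi\rangle = \lambda^{-1}\langle\xi, A\xi\rangle$, and since $\lambda \neq 1$ we conclude $\langle\xi, A\xi\rangle = 0$. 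I would present this second computation as the main line since it is self-contained and one-line.

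There is essentially no obstacle here; the only subtlety is bookkeeping with the convention that $\langle\cdot,\cdot\rangle$ is linear in the second argument and antilinear in the first, so I must make sure the complex conjugate $\bar\mu$ lands correctly and that I use $B^\dagger = B^{-1}$ to move $B$ across the inner product. The hypothesis $B^k = \un$ is used only to guarantee $\mu \neq 0$ (equivalently $|\mu| = 1$), and the hypothesis that $A$ is unitary is used only to guarantee $A\xi \neq 0$ in the first approach (it is not strictly needed for the second). This lemma then feeds into the Heisenberg–Weyl arguments exactly as Lemma~2.1 of \cite{VZ23} did in the Pauli case, applied with $A = X^{\ell_1}Z^{m_1}$, $B = X^\ell Z^m$ and $\lambda$ the phase appearing in Lemma~\ref{lem:HW basis general}(3).
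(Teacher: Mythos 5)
Your proposal is correct, and your main line (the second computation) is essentially the paper's own proof: both move $B$ across the inner product using $B^\dagger\xi=\overline{\mu}\xi$ (you via $B^\dagger=B^{-1}$ and $|\mu|=1$, the paper via $B^\dagger=B^{k-1}$ and $\mu^k=1$) and then cancel using $\lambda\neq 1$. Your first framing --- that $A\xi$ lies in the $\lambda^{-1}\mu$-eigenspace of the normal operator $B$, hence is orthogonal to $\xi$ --- is only a cosmetic repackaging of the same mechanism, so there is nothing further to flag.
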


\begin{proof}
	By assumption
	\[
	\mu\langle \xi,A\xi\rangle
	=\langle \xi,AB\xi\rangle 
	=\lambda \langle \xi,BA\xi\rangle.
	\]
	Since $B^\dagger=B^{k-1}$, $B^\dagger\xi=B^{k-1}\xi=\mu^{k-1}\xi=\overline{\mu}\xi$. Thus
	\[
	\mu\langle \xi,A\xi\rangle
	=\lambda \langle \xi,BA\xi\rangle
	=\lambda \langle B^\dagger\xi,A\xi\rangle
	=\lambda \mu\langle \xi,A\xi\rangle.
	\]
	Hence, $\mu(\lambda-1)\langle \xi,A\xi\rangle=0$. This gives $\langle \xi,A\xi\rangle=0$ as $\mu(\lambda-1)\ne 0$. 
\end{proof}

\subsection{The prime $\go$ case}

In this subsection we prove Theorem \ref{thm:bh HW prime}. 
When $\go$ is prime, the basis $\{X^\ell Z^m\}$ has nicer properties. 

\begin{lemma}\label{lem:HW basis prime}
Fix $\go\ge 3$ a prime number. Consider the set of generators 
\begin{equation}\label{eq:Sigma prime}
\Sigma_{\go}:=\{(1,0),(1,1),\dots, (1,\go-1),(0,1)\}.
\end{equation}
Then the group $\Z_{\go}\times \Z_{\go}$ is the union of subgroups 
\begin{equation}
\Z_{\go}\times \Z_{\go}=\bigcup_{(\ell,m)\in \Sigma_{\go}}\langle (\ell,m)\rangle
\end{equation}
where each two subgroups intersects with the unit $(0,0)$ only. Moreover, for any $(\ell,m)\in \Sigma_\go$, the set of eigenvalues of each $X^\ell Z^m$ is exactly $\Omega_\go.$
\end{lemma}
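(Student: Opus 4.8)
The plan is to exploit that, since $\go$ is prime, $\Z_\go\times\Z_\go$ is a two-dimensional vector space over the field $\mathbb{F}_\go=\Z/\go\Z$, so the nonzero cyclic subgroups $\langle(\ell,m)\rangle$ are precisely the $\go+1$ one-dimensional subspaces (lines through the origin), and $\Sigma_\go$ is a transversal picking exactly one generator per line; the eigenvalue claim will then follow from Lemma~\ref{lem:HW basis general} together with a parity computation.

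For the covering and disjointness I would argue directly. Given $(\ell,m)\neq(0,0)$: if $\ell\neq 0$ then $\ell$ is invertible modulo $\go$, so $(\ell,m)=\ell\cdot(1,\ell^{-1}m)$ lies in $\langle(1,\ell^{-1}m)\rangle$ with $(1,\ell^{-1}m)\in\Sigma_\go$; if $\ell=0$ then $m\neq0$ and $(0,m)=m\cdot(0,1)\in\langle(0,1)\rangle$. Since $(0,0)$ lies in every subgroup, this yields $\Z_\go\times\Z_\go=\bigcup_{(\ell,m)\in\Sigma_\go}\langle(\ell,m)\rangle$. For disjointness, if a nonzero $(a,b)$ lay in $\langle(\ell,m)\rangle\cap\langle(\ell',m')\rangle$ for distinct $(\ell,m),(\ell',m')\in\Sigma_\go$, writing $(a,b)=k(\ell,m)$ forces $k$ invertible (else $(a,b)=(0,0)$), whence $(\ell,m)$ is a scalar multiple of $(a,b)$, hence of $(\ell',m')$; but no two distinct members of $\Sigma_\go$ are proportional—comparing first coordinates pins the scalar to $1$ among the $(1,j)$, and $(0,1)$ is visibly not a multiple of any $(1,j)$—a contradiction.

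For the eigenvalues, each $(\ell,m)\in\Sigma_\go$ has $\gcd(\ell,m)=1$ in the paper's convention (immediate for $(1,j)$; for $(0,1)$ because $\gcd(\go,1)=1$), so Lemma~\ref{lem:HW basis general}(4) gives that the eigenvalue set of $X^\ell Z^m$ is either $\Omega_\go$ or $\Omega_{2\go}\setminus\Omega_\go$. To exclude the second possibility I would compute, via Lemma~\ref{lem:HW basis general}(2), $(X^\ell Z^m)^\go=\omega^{\frac12\go(\go-1)\ell m}X^{\go\ell}Z^{\go m}=\omega^{\frac12\go(\go-1)\ell m}\un$; since $\go$ is an odd prime, $\tfrac{\go-1}{2}\in\Z$, so this scalar equals $(\omega^\go)^{\frac{\go-1}{2}\ell m}=1$ and $(X^\ell Z^m)^\go=\un$, forcing all eigenvalues into $\Omega_\go$. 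As $\Omega_{2\go}\setminus\Omega_\go$ is disjoint from $\Omega_\go$, the eigenvalue set must be exactly $\Omega_\go$. The only genuinely load-bearing point is this parity computation: it is precisely where primality (hence oddness) of $\go$ enters on the spectral side, and it is what can fail for even $\go$, landing one in $\Omega_{2\go}\setminus\Omega_\go$; the rest is bookkeeping in $\mathbb{F}_\go$.
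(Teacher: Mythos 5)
Your proposal is correct and follows essentially the same route as the paper: the covering argument via invertibility of $\ell$ modulo the prime $\go$, and the exclusion of $\Omega_{2\go}\setminus\Omega_\go$ by computing $(X^\ell Z^m)^\go=\omega^{\frac12\go(\go-1)\ell m}\un=\un$ using that $(\go-1)/2\in\Z$, are exactly the paper's steps. The only (minor) divergence is the pairwise-trivial-intersection claim, which you prove directly by a proportionality argument in $\mathbb{F}_\go^2$ while the paper deduces it from a cardinality count ($1+(\go+1)(\go-1)=\go^2$); both are valid and of comparable length.
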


\begin{proof}
To prove the first statement, take any $(\ell,m)\in \Z_\go\times \Z_\go$. If $\ell=0$, then $(\ell,m)=(0,m)\in \langle (0,1) \rangle$. If $\ell\neq 0$, then $\gcd(\ell,\go)=1$. So by B\'ezout's lemma, there exists $\ell'$ such that $\ell \ell'\equiv 1\mod \go$. Thus $(\ell,m)=(\ell, \ell\ell'm)\in \langle (1,\ell'm )\rangle$. The statement about the intersection is clear since otherwise, the cardinality of the union of these subgroups is strictly smaller  than $1+(\go+1)(\go-1)=\go^2$ which leads to a contradiction.

 The second statement follows from the proof of Lemma \ref{lem:HW basis general}. In fact, when $\go$ is odd, $(\go-1)/2$ is an integer and we have by Lemma \ref{lem:HW basis general} (2) that
$$
(X^{\ell}Z^m)^{\go}=\omega^{\frac{1}{2}\go(\go-1)\ell m}X^{\ell \go}Z^{m\go}=\un.
$$	
So the eigenvalues of $X^{\ell}Z^m$ must be roots of unity of order $\go$. This, together with the claim in the proof of Lemma \ref{lem:HW basis general} (4) and the fact that $\gcd(\ell,m)=1, (\ell,m)\in \Sigma_\go$, completes the proof of the lemma. For concrete forms of the eigenvectors, see the Appendix \ref{appendix} for details.
\end{proof}

Now we are ready to prove Theorem \ref{thm:bh HW prime}:
\begin{proof}[Proof of Theorem \ref{thm:bh HW prime}]
Fix a prime number $\go\ge 2$. Recall that $\om=e^{\frac{2\pi i}{\go}}$. Consider $\Sigma_\go$ defined in \eqref{eq:Sigma prime}.
For any $(\ell,m)\in \Sigma_\go$, by Lemma \ref{lem:HW basis prime} any $z\in\Om_\go$ is an eigenvalue of $X^\ell Z^m$ and we denote by $e^{\ell,m}_{z}$ the corresponding unit eigenvector.
For any vector $\vec{\om}\in \Om_\go^{(\go+1)n}$ of the form (noting that $|\Sigma_\go|=\go+1$)
\begin{equation}\label{eq:defn of vec omega}
	\vec{\om}=({\vec{\om}}^{\ell,m})_{(\ell,m)\in \Sigma_\go},
	\qquad {\vec{\om}}^{\ell,m}=(\omega^{\ell,m}_1,\dots, \omega^{\ell,m}_n)\in \Om_\go^{n},
\end{equation}
we consider the matrix 
\[
\rho(\vec{\om}):=\rho_{1}(\vec{\om})\otimes \cdots\otimes \rho_{n}(\vec{\om})
\]
where 
\[
\rho_k(\vec{\om}):=\frac{1}{\go+1}\sum_{(\ell,m)\in \Sigma_\go}\ketbra{e^{\ell,m}_{\om^{\ell,m}_k}}{e^{\ell,m}_{\om^{\ell,m}_k}}.
\]
Then each $\rho_{k}(\vec{\om})$ is a density matrix and so is $\rho(\vec{\om})$. 

Fix $(\ell,m)\in \Sigma_\go$ and $1\le k\le \go-1$. We have by Lemma \ref{lem:HW basis general}
\begin{align*}
	\tr[X^{k\ell}Z^{k m}\ketbra{e^{\ell,m}_{z}}{e^{\ell,m}_{z}}]
	&=\om^{-\frac{1}{2}k(k-1)\ell m}\langle  e^{\ell,m}_{z},(X^\ell Z^m)^k e^{\ell,m}_{z}\rangle\\
	&=\om^{-\frac{1}{2}k(k-1)\ell m}z^k, \qquad z\in \Om_\go.
\end{align*}
On the other hand, for any $(\ell,m)\neq (\ell',m')\in \Sigma_\go$, we have $(k\ell,km)\notin\langle (\ell',m')\rangle$ by Lemma \ref{lem:HW basis prime}.
From our choice $\gcd (\ell',m')=1$. So Lemma \ref{lem:HW basis general} gives 
\begin{equation*}
	X^{k\ell}Z^{km}X^{\ell'}Z^{m'}=\om^{k\ell' m-k\ell m'}X^{\ell'}Z^{m'}X^{k\ell}Z^{km}
\end{equation*}
with $\om^{k\ell' m-k\ell m'}\neq 1$. This, together with Lemma \ref{lem:orthogonal}, implies
\begin{equation*}
	\tr[X^{k\ell}Z^{km}\ketbra{e^{\ell',m'}_{z}}{e^{\ell',m'}_{z}}]
	=\langle e^{\ell',m'}_{z},X^{k\ell}Z^{km}e^{\ell',m'}_{z}\rangle=0, \qquad z\in \Om_\go.
\end{equation*}
for any $1\le k\le \go-1$.
All combined, for all $1\le k\le \go-1,(\ell,m)\in \Sigma_\go$ and $1\le j\le n$ we get
\begin{align*}
	\tr[X^{k\ell} Z^{k m } \rho_j(\vec{\om})]
	&=\frac{1}{\go+1}\sum_{(\ell',m')\in \Sigma_\go}\left\langle e^{\ell',m'}_{\om^{\ell',m'}_j}, X^{k\ell}Z^{ km}e^{\ell',m'}_{\om^{\ell',m'}_j}\right\rangle\\
	&=\frac{1}{\go+1}\left\langle e^{\ell,m}_{\om^{\ell,m}_j}, X^{k\ell}Z^{ km}e^{\ell,m}_{\om^{\ell,m}_j}\right\rangle\\
	&=\frac{1}{\go+1}\om^{-\frac{1}{2}k(k-1)\ell m}(\om^{\ell,m}_j)^k.
\end{align*}

Note that by Lemma \ref{lem:HW basis prime} any polynomial in $M_\go(\C)^{\otimes n}$ of degree at most $d$ is a linear combination of monomials
\[
A(\vec{k},\vec{\ell},\vec{m};\vec{i}):=
\cdots \otimes X^{k_1\ell_1}Z^{k_1 m_1}\otimes \cdots \otimes X^{k_\kappa\ell_\kappa}Z^{k_\kappa m_\kappa}\otimes\cdots
\]
where
\begin{itemize}
	\item $\vec{k}=(k_1,\dots,k_\kappa)\in \{1,\dots, \go-1\}^\kappa$ with $0\le \sum_{j=1}^{\kappa}k_j\le d$;
	\item $\vec{\ell}=(\ell_1,\dots, \ell_\kappa),\vec{m}=(m_1,\dots, m_\kappa)$ with each $(\ell_j,m_j)\in \Sigma_\go$;
	\item $\vec{i}=(i_1,\dots, i_\kappa)$ with $1\le i_1 <\cdots<i_\kappa\le n$;
	\item $X^{k_j\ell_j}Z^{k_j m_j}$ appears in the $i_j$-th place, $1\le j\le \kappa$, and all the other $n-\kappa$ elements in the tensor product are the identity matrices $\un$.
\end{itemize}
So for any $\vec{\om}\in \Om_\go^{(\go+1)n}$ of the form \eqref{eq:defn of vec omega} we have from the above discussion that
\begin{align*}
	\tr[A(\vec{k},\vec{\ell},\vec{m};\vec{i})\rho(\vec{\om})]
	&=\prod_{j=1}^{\kappa}\tr[X^{k_j\ell_j}Z^{k_j m_j}\rho_{i_j}(\vec{\om})]\\
	&=\frac{\om^{-\frac{1}{2}\sum_{j=1}^{\kappa}k_j(k_j-1)\ell_j m_j}}{(\go+1)^{\kappa}}(\om^{\ell_1,m_1}_{i_1})^{k_1}\cdots (\om^{\ell_\kappa,m_\kappa}_{i_\kappa})^{k_\kappa}.
\end{align*}
Thus $\vec{\om}\mapsto 	\tr[A(\vec{k},\vec{\ell},\vec{m};\vec{i})\rho(\vec{\om})]$ is a monomial on $(\Om_\go)^{(\go+1)n}$ of degree at most $\sum_{j=1}^{\kappa}k_j\le d$.

Now for general polynomial $A\in M_\go(\C)^{\otimes n}$ of degree at most $d$:
\begin{equation*}
	A=\sum_{\vec{k},\vec{\ell},\vec{m},\vec{i}} c(\vec{k},\vec{\ell},\vec{m};\vec{i})A(\vec{k},\vec{\ell},\vec{m};\vec{i})
\end{equation*}
where the sum runs over the above $(\vec{k},\vec{\ell},\vec{m};\vec{i})$. This is the Fourier expansion of $A$ and each $c(\vec{k},\vec{\ell},\vec{m};\vec{i})\in \C$ is the Fourier coefficient. So 
\begin{equation*}
	\|\widehat{A}\|_p=\left(\sum_{\vec{k},\vec{\ell},\vec{m},\vec{i}}|c(\vec{k},\vec{\ell},\vec{m};\vec{i})|^p\right)^{1/p}.
\end{equation*}
To each $A$ we assign the function $f_A$ on $\Om_\go^{(\go+1)n}$ given by
\begin{align*}
	f_A(\vec{\om})&=\tr[A\rho(\vec{\om})]\\
	&=\sum_{\vec{k},\vec{\ell},\vec{m},\vec{i}} \frac{\om^{-\frac{1}{2}\sum_{j=1}^{\kappa}k_j(k_j-1)\ell_j m_j}c(\vec{k},\vec{\ell},\vec{m};\vec{i})}{(\go+1)^{\kappa}}(\om^{\ell_1,m_1}_{i_1})^{k_1}\cdots (\om^{\ell_\kappa,m_\kappa}_{i_\kappa})^{k_\kappa}.
\end{align*}
Note that this is the Fourier expansion of $f_A$ since the monomials $(\om^{\ell_1,m_1}_{i_1})^{k_1}\cdots (\om^{\ell_\kappa,m_\kappa}_{i_\kappa})^{k_\kappa}$ differ for distinct $(\vec{k},\vec{\ell},\vec{m};\vec{i})$'s. Therefore, for $p>0$
\begin{align*}
	\|\widehat{f_A}\|_p
	&=\left(\sum_{\vec{k},\vec{\ell},\vec{m},\vec{i}}\left|\frac{c(\vec{k},\vec{\ell},\vec{m};\vec{i})}{(\go+1)^{\kappa}}\right|^p\right)^{1/p}\\
	&\ge \frac{1}{(\go+1)^{d}}\left(\sum_{\vec{k},\vec{\ell},\vec{m},\vec{i}}|c(\vec{k},\vec{\ell},\vec{m};\vec{i})|^p\right)^{1/p}\\
	&=\frac{1}{(\go+1)^{d}}	\|\widehat{A}\|_p.
\end{align*}
According to Theorem \ref{thm:bh cyclic}, one has
\begin{equation*}
	\|\widehat{f_A}\|_{\frac{2d}{d+1}}\le \textnormal{BH}^{\le d}_{\Omega_\go}\|f_A\|_{\Omega_\go^{(\go+1)n}}
\end{equation*}
for some $\textnormal{BH}^{\le d}_{\Omega_\go}<\infty$. Since each $\rho(\vec{\omega})$ is a density matrix, we have by duality that 
\begin{equation*}
\|f_A\|_{\Omega_\go^{(\go+1)n}}=\sup_{\vec{\omega}\in (\Omega_{\go})^{(\go+1)n}}|\tr[A\rho(\vec{\omega})]|\le \|A\|_{\textnormal{op}}.
\end{equation*}
All combined, we obtain
\begin{equation*}
	\|\widehat{A}\|_{\frac{2d}{d+1}}
	\le (\go+1)^d	\|\widehat{f_A}\|_{\frac{2d}{d+1}}
	\le (\go+1)^d\textnormal{BH}^{\le d}_{\Omega_\go}\|f_A\|_{\Omega_\go^{(\go+1)n}}
	\le (\go+1)^d\textnormal{BH}^{\le d}_{\Omega_\go}\|A\|_{\textnormal{op}}\,. \qedhere
\end{equation*}
\end{proof}

\subsection{The non-prime $\go$ case}

This subsection is devoted to the proof of Theorem \ref{thm:bh HW nonprime}.
 Throughout this part, $\go\ge 4$ is a non-prime integer.

%
%
%
%

%
%

 We start with a substitute of $\Sigma_\go$ in \eqref{eq:Sigma prime} for non-prime $\go$.

\begin{lem}\label{lem:HW basis non-prime}
Fix non-prime $\go\ge 4$. Consider
\begin{equation}\label{eq:Sigma non-prime}
\Sigma_{\go}:=\left\{(\ell,m)\in \Z_\go\times \Z_\go: \gcd(\ell,m)=1\right\}.
\end{equation}
Then $\Z_{\go}\times \Z_{\go}$ is the union of subgroups generated by elements in $\Sigma_\go$:
\begin{equation}
\Z_{\go}\times \Z_{\go}=\bigcup_{(\ell,m)\in \Sigma_\go}\langle (\ell,m)\rangle.
\end{equation}
\end{lem}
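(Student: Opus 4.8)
The plan is to show that every pair $(\ell,m)\in\Z_\go\times\Z_\go$ lies in $\langle(\ell',m')\rangle$ for some $(\ell',m')$ with $\gcd(\ell',m')=1$; the reverse inclusion is trivial since each subgroup sits inside $\Z_\go\times\Z_\go$. Given $(\ell,m)$, the natural move is to let $g=\gcd(\ell,m)$ (computed, as the convention in this section stipulates, by treating a zero entry as $\go$), write $\ell=g\ell'$ and $m=gm'$, and hope that $(\ell,m)=g\cdot(\ell',m')\in\langle(\ell',m')\rangle$ with $\gcd(\ell',m')=1$. First I would handle the genuinely easy cases: if $(\ell,m)=(0,0)$ it lies in $\langle(1,0)\rangle$; if exactly one coordinate is zero, say $m=0$ and $\ell\ne0$, then $g=\gcd(\ell,\go)$, and $(\ell,0)=g\cdot(\ell/g,0)$ with $\gcd(\ell/g,\go/\gcd(\ell,\go))=1$ — but one must be careful that dividing by $g$ is legitimate, i.e. that $\ell/g$ and $g$ make sense in $\Z_\go$, which they do as integers in $\{1,\dots,\go\}$; the subtlety is whether $(\ell/g,0)\in\Sigma_\go$, meaning $\gcd(\ell/g,\go)=1$, which holds because $g$ already absorbed all common factors with $\go$.

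The main content is the case $\ell\ne0\ne m$. Here I would set $g=\gcd(\ell,m)$ as ordinary integers (both in $\{1,\dots,\go-1\}$), and $\ell'=\ell/g$, $m'=m/g$ so that $\gcd(\ell',m')=1$. The claim to verify is that $(\ell,m)\in\langle(\ell',m')\rangle=\{(k\ell',km'):k\in\Z_\go\}$: this holds by taking $k=g\bmod\go$, since $g<\go$ so $k=g$ and $(g\ell',gm')=(\ell,m)$ exactly, with no wraparound needed. The one thing that genuinely requires care — and I expect this to be the main obstacle — is reconciling the section's non-standard gcd convention (where $\gcd(0,2)$ for $\go=6$ is read as $\gcd(6,2)=2$) with the subgroup structure: when $\ell=0$ but $m\ne0$, one cannot literally "divide $0$ by $g$," so one instead argues directly that $(0,m)=\tfrac{m}{\gcd(m,\go)}\cdot(0,\gcd(m,\go))$ after first reducing, or better, notes $(0,m)\in\langle(0,1)\rangle$ and $\gcd(0,1)=\gcd(\go,1)=1$ so $(0,1)\in\Sigma_\go$. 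Thus the zero-coordinate cases are actually subsumed by observing $(0,1),(1,0)\in\Sigma_\go$ and that $\langle(0,1)\rangle,\langle(1,0)\rangle$ cover the coordinate axes; only the doubly-nonzero case needs the gcd-division argument, and there the convention coincides with the ordinary gcd.

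So the proof structure is: (i) dispatch the two coordinate axes via $(1,0),(0,1)\in\Sigma_\go$, noting these generate all of $\{(\ell,0)\}$ and $\{(0,m)\}$ respectively; (ii) for $\ell,m$ both nonzero, put $g=\gcd(\ell,m)\in\{1,\dots,\go-1\}$, observe $(\ell/g,m/g)\in\Sigma_\go$ and $(\ell,m)=g\cdot(\ell/g,m/g)$ with the scalar $g$ causing no modular wraparound, hence $(\ell,m)\in\langle(\ell/g,m/g)\rangle$; (iii) conclude the union is all of $\Z_\go\times\Z_\go$. I would not expect to need B\'ezout here, unlike the prime case — the elementary "divide out the gcd" observation suffices, which is the reason the non-prime statement, unlike Lemma~\ref{lem:HW basis prime}, makes no claim about the subgroups intersecting only in $(0,0)$ (indeed they overlap substantially) nor about the eigenvalue sets.
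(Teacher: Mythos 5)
Your proof is correct and follows essentially the same route as the paper's one-line argument: given $(\ell,m)$, divide out $\gcd(\ell,m)$ to land in $\langle(\ell/\gcd(\ell,m),\,m/\gcd(\ell,m))\rangle$ with the generator in $\Sigma_\go$. Your separate treatment of the coordinate axes via $(1,0)$ and $(0,1)$ is just an explicit unpacking of the paper's gcd convention for zero entries, so the two arguments coincide in substance.
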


\begin{proof}
The proof is direct: any $(\ell,m)\in \Z_{\go}\times \Z_{\go}$ belongs to $\langle(\ell_1, m_1)\rangle$ for $(\ell_1, m_1)=(\ell/\gcd(\ell,m),m/\gcd(\ell,m))\in \Sigma_{\go}$.
\end{proof}

Recall that when $\go$ is prime, for two different subgroups $\langle (\ell_1,m_1)\rangle\neq \langle (\ell_2,m_2)\rangle$ one has the singleton set $\{(0,0)\}$ as their intersection. However, this is no longer the case when $\go$ is not prime. For example, for $\go=6$, we have $\langle (1,0)\rangle\neq \langle (2,3)\rangle$ while $\langle (1,0)\rangle \cap \langle (2,3)\rangle=\{(0,0),(2,0),(4,0)\}$. This difference will make the proof of Theorem \ref{thm:bh HW nonprime} more involved.

\begin{proof}[Proof of Theorem \ref{thm:bh HW nonprime}]
Fix non-prime $\go\ge 4$.  Consider $\Sigma_{\go}$
defined in \eqref{eq:Sigma non-prime}.
Then we know by Lemma \ref{lem:HW basis non-prime} that the set of eigenvalues of $X^\ell Z^m$ is either $\Omega_\go$ or $\Omega_{2\go}\setminus \Omega_{\go}$. In either case, 
suppose that $z$ is an eigenvalue of $X^\ell Z^m$. We denote by $e^{\ell,m}_{z}$ the unit eigenvector of $X^\ell Z^m$ corresponding to $z$. 

 Write 
$\Sigma_\go=\Sigma_\go^{+}\cup \Sigma_\go^{-}$ with
\begin{equation}
\Sigma_\go^{+}:=\{(\ell,m)\in \Sigma_{\go}: \textnormal{ the set of eigenvalues of  $X^\ell Z^m$ is } \Omega_\go\}
\end{equation}
and
\begin{equation}
\Sigma_\go^{-}:=\{(\ell,m)\in \Sigma_{\go}: \textnormal{ the set of eigenvalues of  $X^\ell Z^m$ is } \Omega_{2\go}\setminus\Omega_\go\}.
\end{equation}
As before, for any $\vec{\omega}\in \Omega_{\go}^{|\Sigma_{\go}|n}$ of the form 
\begin{equation}
\vec{\omega}=(\vec{\omega}^{\ell,m})_{(\ell,m)\in \Sigma_{\go}},\qquad
\vec{\omega}^{\ell,m}=(\omega^{\ell,m}_{1},\dots, \omega^{\ell,m}_{n})\in \Omega_{\go}^{n},
\end{equation}
we shall consider 
\begin{equation}
\rho(\vec{\omega}):=\rho_1(\vec{\omega})\otimes \cdots \otimes \rho_n(\vec{\omega})
\end{equation}
where each $\rho_j(\vec{\omega})$ is the average of some eigen-projections of $X^\ell Z^m, (\ell,m)\in \Sigma_\go$. If $(\ell,m)\in \Sigma^{+}_{\go}$, then $X^\ell Z^m$ has $\omega^{\ell,m}_j\in \Omega_{\go}$ as an eigenvalue with $e^{\ell,m}_{\omega^{\ell,m}_j}$ being the unit eigenvector. If $(\ell,m)\in \Sigma^{-}_{\go}$, then $\omega^{\ell,m}_j\in \Omega_{\go}$  is not an eigenvalue of $X^\ell Z^m$. In this case, $X^\ell Z^m$ has $\omega^{1/2}\omega^{\ell,m}_j\in \Omega_{2\go}\setminus\Omega_\go$ as an eigenvalue with $e^{\ell,m}_{\omega^{1/2}\omega^{\ell,m}_j}$ being the unit eigenvector.

For each $1\le j\le n$, consider 
\begin{align}
\rho_j(\vec{\omega})
:=\frac{1}{|\Sigma_{\go}|}\sum_{(\ell,m)\in \Sigma_\go^{+}}\ket{e^{\ell,m}_{\omega^{\ell,m}_j}}\bra{e^{\ell,m}_{\omega^{\ell,m}_j}}+\frac{1}{|\Sigma_{\go}|}\sum_{(\ell,m)\in \Sigma_\go^{-}}\ket{e^{\ell,m}_{\omega^{1/2}\omega^{\ell,m}_j}}\bra{e^{\ell,m}_{\omega^{1/2}\omega^{\ell,m}_j}}.
\end{align}
By definition, each $\rho_j(\vec{\omega})$ is a density matrix and so is $\rho(\vec{\omega}).$

For any $(0,0)\neq (\ell',m')\in \Z_{\go}\times \Z_{\go}$ and any $(\ell,m)\in\Sigma_{\go}$, either $(\ell',m')\notin \langle (\ell,m)\rangle$ or $(\ell',m')=(k\ell,km)$ for some $k\in \Z_{\go}.$ If $(\ell',m')\notin \langle (\ell,m)\rangle$, then by  Lemma \ref{lem:HW basis general}
\begin{equation}
X^{\ell'}Z^{m'}\cdot X^{\ell}Z^{m}=\omega^{\ell m'-\ell' m}X^{\ell}Z^{m}\cdot X^{\ell'}Z^{m'}
\end{equation}
with $\omega^{\ell m'-\ell' m}\neq 1$. So Lemma \ref{lem:orthogonal} gives
\begin{equation}
\tr[ X^{\ell'}Z^{m'} \ket{e^{\ell,m}_{z}}\bra{e^{\ell,m}_{z}}]
=0,
\end{equation}
for any eigenvalue $z$ of $X^\ell Z^m$.

If $(\ell',m')=(k\ell,km)$ for some $k\in \Z_{\go}$, then by Lemma \ref{lem:HW basis general}
\begin{equation}
X^{\ell'}Z^{m'}
=X^{k\ell}Z^{km}
=\omega^{-\frac{1}{2}k(k-1)\ell m}(X^{\ell}Z^{m})^k.
\end{equation} 
So for any eigenvalue $z$ of $X^\ell Z^m$.
\begin{equation}
\tr[ X^{\ell'}Z^{m'} \ket{e^{\ell,m}_{z}}\bra{e^{\ell,m}_{z}}]
=\omega^{-\frac{1}{2}k(k-1)\ell m} z^k.
\end{equation}
All combined, we have for any $\vec{\omega}\in (\Omega_{\go})^{|\Sigma_{\go}|n}$ that 
\begin{align*}
\tr\left[X^{\ell'}Z^{m'}\rho_j(\vec{\omega})\right]
&=\frac{1}{|\Sigma_{\go}|}\sum_{(\ell,m)\in \Sigma_{\go}^{+}:(\ell',m')=(k_{\ell,m}\ell,k_{\ell,m}m)}\omega^{-\frac{1}{2}k_{\ell,m}(k_{\ell,m}-1)\ell m}(\omega_j^{\ell,m})^{k_{\ell,m}}\\
&\quad +\frac{1}{|\Sigma_{\go}|}\sum_{(\ell,m)\in \Sigma_{\go}^{-}:(\ell',m')=(k_{\ell,m}\ell,k_{\ell,m}m)}\omega^{-\frac{1}{2}k_{\ell,m}(k_{\ell,m}-1)\ell m}(\omega^{1/2}\omega_j^{\ell,m})^{k_{\ell,m}}\\
&=\frac{1}{|\Sigma_{\go}|}\sum_{(\ell,m)\in \Sigma_{\go}^{+}:(\ell',m')=(k_{\ell,m}\ell,k_{\ell,m}m)}\omega^{-\frac{1}{2}k_{\ell,m}(k_{\ell,m}-1)\ell m}(\omega_j^{\ell,m})^{k_{\ell,m}}\\
&\quad +\frac{1}{|\Sigma_{\go}|}\sum_{(\ell,m)\in \Sigma_{\go}^{-}:(\ell',m')=(k_{\ell,m}\ell,k_{\ell,m}m)} \omega^{-\frac{1}{2}k_{\ell,m}(k_{\ell,m}-1)\ell m+\frac{1}{2}k_{\ell,m}}(\omega_j^{\ell,m})^{k_{\ell,m}}.
\end{align*}
Here in the summation, when $(\ell',m')\in \langle (\ell,m)\rangle$ we write $(\ell',m')=(k_{\ell,m}\ell,k_{\ell,m}m)$ with $1\le k_{\ell,m}\le \go-1$.
So $\vec{\omega}\mapsto \tr\left[X^{\ell'}Z^{m'}\rho_j(\vec{\omega})\right]$ is a polynomial on $\Omega_{\go}^{|\Sigma_{\go}|n}$ of degree at most $\go-1$, and all the (non-zero) coefficients are of modulus $|\Sigma_{\go}|^{-1}$.
To compare, in the prime $\go$ case, we have only one non-zero term in the above summation. In the non-prime $\go$ case, there might be more than one term. That is, 
we may have different $(\ell_1,m_1)$ and $(\ell_2, m_2)$ in $\Sigma_{\go}$ such that $(\ell',m')= (k_1\ell_1,k_1 m_1)=(k_2 \ell_2, k_2 m_2)$ with $1\le k_1,k_2\le \go-1$. For example for $\go=6$, we have $(0,3)=(3\cdot 0,3\cdot 1)=(3\cdot 2,3\cdot 3)=(3\cdot 4, 3\cdot 5)=(3\cdot 2,3\cdot 1)=(3\cdot 4, 3\cdot 3)$. Though $\tr[X^{\ell'}Z^{m'}\rho_j(\vec{\omega})]$ is no longer a monomial of degree $\deg(X^{\ell'}Z^{m'})$, it is still a non-zero polynomial of degree at most $\go-1$.

Now for any monomial in $M_{\go}(\C)^{\otimes n}$ of degree at most $d$ admitting the form
\begin{equation}
A(\vec{\ell},\vec{m};\vec{i}):=\cdots \otimes X^{\ell_1}Z^{m_1}\otimes \cdots \otimes X^{\ell_\kappa}Z^{m_{\kappa}}\otimes \cdots
\end{equation}
where $\kappa\le d$ and 
\begin{itemize}
\item $\vec{\ell}=(\ell_1,\dots, \ell_{\kappa}), \vec{m}=(m_1,\dots, m_{\kappa})$ with each $(0,0)\neq (\ell_j,m_j)\in \Z_{\go}\times \Z_{\go}$;
\item $\vec{i}=(i_1,\dots, i_{\kappa})$ with $1\le i_1<\cdots <i_{\kappa}\le n$;
\item and each $X^{\ell_j}Z^{m_j}$ appears in the $i_j$-th place, and all the other $n-\kappa$ elements in the tensor product are the identity matrices $\un$. 
\end{itemize}

According to our previous discussion, 
\begin{equation}
\tr[A(\vec{\ell},\vec{m};\vec{i})\rho(\vec{\omega})]
=\prod_{1\le j\le \kappa}\tr[X^{\ell_j}Z^{m_j}\rho_{i_j}(\vec{\omega})]
\end{equation}
is a linear combination of monomials 
$$
(\omega^{a_1,b_1}_{i_1})^{c_1}\cdots (\omega^{a_\kappa,b_\kappa}_{i_\kappa})^{c_\kappa}
$$
of degree at most $(\go-1)\kappa\le (\go-1)d$, with $(a_j,b_j)\in \Sigma_{\go}$ and $1\le c_j\le \go-1$ such that $(c_j a_j, c_j b_j)\equiv (\ell_j,m_j)\mod \go$. This implies that these monomials remember the profile $(\vec{\ell},\vec{m};\vec{i})$ well; \emph{i.e.}, for distinct $(\vec{\ell},\vec{m};\vec{i})\neq (\vec{\ell'},\vec{m'};\vec{i'})$, the corresponding polynomials $\tr[A(\vec{\ell},\vec{m};\vec{i})\rho(\vec{\omega})]$ and $\tr[A(\vec{\ell'},\vec{m'};\vec{i'})\rho(\vec{\omega})]$ do not admit common monomials. Moreover, the coefficients of the those monomials are all of the modulus $|\Sigma_{\go}|^{-\kappa}\ge |\Sigma_{\go}|^{-d}$.
 
For general $A\in M_{\go}(\C)^{\otimes n}$ of degree at most $d$ admitting 
\begin{equation}
A=\sum_{\vec{\ell},\vec{m},\vec{i}}c(\vec{\ell},\vec{m};\vec{i}) A(\vec{\ell},\vec{m};\vec{i})
\end{equation}
 as the Fourier expansion, consider the polynomial
\begin{equation}
f_A(\vec{\omega})=\tr[A\rho(\vec{\omega})]
=\sum_{\vec{\ell},\vec{m},\vec{i}}c(\vec{\ell},\vec{m};\vec{i}) \tr[A(\vec{\ell},\vec{m};\vec{i})\rho(\vec{\omega})]
\end{equation} 
on $\Omega_{\go}^{|\Sigma_\go|n}$. From the above discussion, the $\ell^p$-norm $\|\widehat{f_A}\|_p$ of Fourier coefficients of $f_A$ satisfies
\begin{align*}
\|\widehat{f_A}\|_p\ge 
|\Sigma_{\go}|^{-d}\left(\sum_{\vec{\ell},\vec{m},\vec{i}}|c(\vec{\ell},\vec{m};\vec{i})|^p\right)^{1/p}
=|\Sigma_{\go}|^{-d}\|\widehat{A}\|_p,\qquad p>0.
\end{align*}
Moreover, $f_A$ is of degree at most $(\go-1)d$. So Theorem \ref{thm:bh cyclic} implies 
\begin{equation*}
\|\widehat{f_A}\|_{\frac{2(\go-1)d}{(\go-1)d+1}}\le \textnormal{BH}^{\le (\go+1)d}_{\Omega_\go}\|f_A\|_\infty.
\end{equation*}
Recall that each $\rho(\vec{\omega})$ is a density matrix, so by duality
\begin{equation*}
\|f_A\|_{\infty}=\sup_{\vec{\omega}\in (\Omega_{\go})^{(\go+1)n}}|\tr[A\rho(\vec{\omega})]|\le \|A\|_{\textnormal{op}}.
\end{equation*}
All combined, we prove that 
\begin{equation*}
\|\widehat{A}\|_{\frac{2(\go-1)d}{(\go-1)d+1}}
\le |\Sigma_{\go}|^{d}\|\widehat{f_A}\|_{\frac{2(\go-1)d}{(\go-1)d+1}}
\le |\Sigma_{\go}|^{d}  \textnormal{BH}^{\le (\go+1)d}_{\Omega_\go}\|f_A\|_{\infty}
\le |\Sigma_{\go}|^{d}  \textnormal{BH}^{\le (\go+1)d}_{\Omega_\go}\|A\|_{\mathrm{op}}.
\qedhere
\end{equation*}

\end{proof}

\section{Applications to learning}
\label{sec:learning}
We now apply our new BH inequalities to obtain learning results for observables on qudits.
As with the qubit case, this result may be extended in a certain sense to quantum observables of arbitrary complexity \cite{HCP22}.

For clarity we first extract the main approximation idea of Eskenazis--Ivanisvili \cite{EI22} and include a proof for completeness.
We will need it for vectors in $\C$ rather than $\R$ as it appeared originally but the proof is essentially identical.

\begin{theorem}[Generic Eskenazis--Ivanisvili]
\label{thm:generic-EI}
Let $d\in \mathbb{N}$ and $\eta, B> 0$.
Suppose $v,w\in \C^n$ with $\|v-w\|_\infty\leq \eta$ and $\|v\|_{\frac{2d}{d+1}}\leq B$.
Then for $\widetilde{w}\in \C^n$ defined as $\widetilde{w}_j= w_j \mathbbm{1}_{[|w_j| \geq \eta(1+\sqrt{d+1})]}$
we have the bound
\[\|\widetilde{w}-v\|_{2}^2\leq (e^5\eta^2 d B^{2d})^{\frac{1}{d+1}}.\]
\end{theorem}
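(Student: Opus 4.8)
The plan is to split the index set according to whether a coordinate survives the truncation, estimate the two resulting sums separately, and recombine. Write $p:=\tfrac{2d}{d+1}$ and $\tau:=\eta(1+\sqrt{d+1})$, and set $S:=\{j:|w_j|\ge\tau\}$, so that $\widetilde w_j=w_j$ on $S$ and $\widetilde w_j=0$ on $S^c$. Throughout I will use the exponent identities $2-p=\tfrac{2}{d+1}$ and $\tfrac p2=\tfrac{d}{d+1}$, together with the hypothesis $\|v\|_p\le B$ in the form $\sum_j|v_j|^p\le B^p$.

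On the killed coordinates $j\in S^c$ one has $|\widetilde w_j-v_j|=|v_j|\le|w_j|+\eta<\tau+\eta=\eta(2+\sqrt{d+1})$, so splitting $|v_j|^2=|v_j|^{2-p}|v_j|^p$ and summing gives
\[
\sum_{j\in S^c}|\widetilde w_j-v_j|^2\ \le\ \big(\eta(2+\sqrt{d+1})\big)^{2-p}\sum_j|v_j|^p\ \le\ \eta^{2/(d+1)}\,(2+\sqrt{d+1})^{2/(d+1)}\,B^{2d/(d+1)}.
\]
On the surviving coordinates $j\in S$ one has $|\widetilde w_j-v_j|=|w_j-v_j|\le\eta$, and moreover $j\in S$ forces $|v_j|\ge|w_j|-\eta\ge\tau-\eta=\eta\sqrt{d+1}$; a Markov-type count applied to $\sum_j|v_j|^p\le B^p$ (at most $\big(B/(\eta\sqrt{d+1})\big)^p$ indices can have $|v_j|\ge\eta\sqrt{d+1}$) then yields $|S|\le\big(B/(\eta\sqrt{d+1})\big)^p$, whence
\[
\sum_{j\in S}|\widetilde w_j-v_j|^2\ \le\ \eta^2|S|\ \le\ \eta^2\Big(\tfrac{B}{\eta\sqrt{d+1}}\Big)^{p}\ =\ \eta^{2/(d+1)}\,(d+1)^{-d/(d+1)}\,B^{2d/(d+1)}.
\]

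Adding these gives
\[
\|\widetilde w-v\|_2^2\ \le\ \eta^{2/(d+1)}B^{2d/(d+1)}\Big[(2+\sqrt{d+1})^{2/(d+1)}+(d+1)^{-d/(d+1)}\Big],
\]
and it remains only to show the bracket is at most $(e^5d)^{1/(d+1)}$. For that I would use $2+\sqrt{d+1}\le3\sqrt{d+1}$ to get $(2+\sqrt{d+1})^{2/(d+1)}\le 9^{1/(d+1)}(d+1)^{1/(d+1)}$, rewrite $(d+1)^{-d/(d+1)}=(d+1)^{1/(d+1)}/(d+1)$, factor out $(d+1)^{1/(d+1)}$, and bound $9^{1/(d+1)}+\tfrac1{d+1}\le 9^{1/(d+1)}\big(1+\tfrac1{d+1}\big)\le 9^{1/(d+1)}e^{1/(d+1)}=(9e)^{1/(d+1)}$; the bracket is then at most $\big(9e(d+1)\big)^{1/(d+1)}$, and $9e(d+1)\le 18ed\le e^5d$ for every $d\ge1$. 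Plugging back in yields $\|\widetilde w-v\|_2^2\le(e^5\eta^2dB^{2d})^{1/(d+1)}$, as claimed.

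There is really no genuine obstacle here beyond bookkeeping: the argument is just a two-way split plus Markov's inequality, and the only care needed is in tracking the exponents $2-p=\tfrac2{d+1}$ and $\tfrac p2=\tfrac{d}{d+1}$ and checking the final numerical chain $9e(d+1)\le e^5d$. Passing from $\R$ to $\C$ requires no change, since only moduli enter.
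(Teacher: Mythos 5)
Your proof is correct and follows essentially the same route as the paper's: the same threshold split at $\eta(1+\sqrt{d+1})$, the same triangle-inequality bounds on the two parts, and the same Markov-type cardinality count against $\|v\|_{2d/(d+1)}$. The only difference is cosmetic—you instantiate the threshold from the outset and carry out explicitly the final scalar estimates that the paper defers to \cite{EI22}, and your numerical chain $9e(d+1)\le e^5 d$ checks out for all $d\ge 1$.
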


\begin{proof}
Let $t>0$ be a threshold parameter to be chosen later.
Define $S_t = \{j: |w_j|\geq t\}$ and note from the triangle inequality that
\begin{align}
    |v_j|\geq |w_j|-|v_j-w_j| = t-\eta & \text{ for } j\in S_t
    \label{eq:big-coords}\\
    |v_j|\leq |w_j|+|v_j-w_j| = t+\eta & \text{ for } j\not\in S_t.
    \label{eq:small-coords}
\end{align}
We may also estimate $|S_t|$ as
\begin{equation}
\label{eq:big-coords-cardinality}
|S_t| = \sum_{j\in S_t}1\overset{\eqref{eq:big-coords}}{\leq} (t-\eta)^{-\frac{2d}{d+1}}\sum_{j\in S_t}|v_j|^{\frac{2d}{d+1}}\leq (t-\eta)^{-\frac{2d}{d+1}}\|v\|_{\frac{2d}{d+1}}^{\frac{2d}{d+1}}\leq (t-\eta)^{-\frac{2d}{d+1}}B^{\frac{2d}{d+1}}.
\end{equation}
With $\widetilde{w}^{(t)} := (w_j\mathbbm{1}_{[|w_j|\geq t]})_{j=1}^n$, we find
\begin{align*}
\|\widetilde{w}^{(t)}-v\|_2^2&=\sum_{j\in S_t} |w_j-v_j|^2 + \sum_{j\not\in S_t}|v_j|^2 \overset{\eqref{eq:small-coords}}{\leq} |S_t|\eta^2 + (t+\eta)^{\frac{2}{d+1}}\sum_{j\in[n]}|v_j|^{\frac{2d}{d+1}}\\
&\overset{\eqref{eq:big-coords-cardinality}}{\leq} B^{\frac{2d}{d+1}}\left(\eta^2(t-\eta)^{-\frac{2d}{d+1}}+(t+\eta)^{\frac{2}{d+1}}\right).
\end{align*}
Choosing $t=\eta(1+\sqrt{d+1})$ then yields $\widetilde{w}$ with error, after some careful scalar estimates,
\[
\|\widetilde{w}-v\|_2^2\leq (e^5\eta^2 d B^{2d})^{\frac{1}{d+1}}.
\]
See \cite[Eqs. 18 \& 19]{EI22} for details on the scalar estimates.
\end{proof}

In the context of low-degree learning, $v$ is the true vector of Fourier coefficients, and $w$ is the vector of empirical coefficients obtained through Fourier sampling. 
\subsection{Qudit learning}
We first pursue a learning algorithm that finds a (normalized) $L_2$ approximation to a low-degree operator $\cA$.
Then we'll see how this extends to an algorithm finding an approximation $\widetilde{\cA}$ with good mean-squared error over certain distributions of states for target operators $\cA$ of any degree.

For clarity and brevity we will assume that for a mixed state $\rho$ the quantity $\tr[\cA\rho]$ can be directly computed.
Of course this is not true in practice; in the lab one must take many copies of $\rho$, collect observations $m_1,\ldots, m_s$ and form the estimate $\frac{1}{s}\sum_jm_j\approx \tr[\cA\rho]$.
The analysis required to relax these assumptions from the following results are routine so we omit them.

\subsubsection{Low-degree Qudit learning}
\begin{theorem}[Low-degree Qudit Learning]
    \label{thm:qudit-ld-learning}
 Fix $\epsilon,\delta,t>0$ and $d\ge 1$. Then there is a collection $S$ of product states such that for any quantum observable $\cA\in M_\go(\C)^{\otimes n}$ of degree at most $d$ with $\|\cA\|_\textnormal{op}\leq t$, and a number
    \[\mathcal{O}\Big(\big( \go t\big)^{C\cdot d^2}d^2\eps^{-d-1}\log\left(\tfrac{n}{\delta}\right)\Big)\]of samples of the form $(\rho, \tr[\cA \rho]),$ $\rho\sim\mathcal{U}(S)$, we may with confidence $1-\delta$ learn an observable $\widetilde{\cA}$ with $\|\cA-\widetilde{\cA}\|_2^2\leq \eps$.
\end{theorem}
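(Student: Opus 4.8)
The plan is to carry out the Eskenazis--Ivanisvili low-degree learning scheme in the Gell-Mann setting: use the density matrices of Lemma~\ref{action} to express each low-degree Fourier coefficient of $\cA$ as the expectation of a bounded random variable (``quantum Fourier sampling''), estimate all of them from the samples by empirical averaging, threshold, and control the $L_2$ error via Theorem~\ref{thm:GM} and Theorem~\ref{thm:generic-EI}.

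First I would fix, once and for all (depending only on $n$ and $\go$), the product-state family $S:=\{\rho(\boldsymbol z):\boldsymbol z\in\{-1,1\}^{n(\go^2-1)}\}$ of Lemma~\ref{action}. Writing $\cA=\sum_{|\alpha|\le d}\widehat\cA(\alpha)M_\alpha$ in the Gell-Mann basis and recalling from the proof of Theorem~\ref{thm:GM} that $\tr[M_\alpha\rho(\boldsymbol z)]=\lambda_\alpha\,\chi_\alpha(\boldsymbol z)$, where $\chi_\alpha$ is a Boolean monomial of degree $|\alpha|$, distinct $\alpha$'s give distinct (hence orthonormal) monomials, and $\lambda_\alpha=\big(\tfrac{\sqrt{\go/2}}{3\binom{\go}{2}}\big)^{|\alpha|}\in(0,1]$ with $\lambda_\alpha^{-1}\le\big(\tfrac32(\go^2-\go)\big)^{d}$, the orthonormality of the $\chi_\alpha$ gives the quantum Fourier-sampling identity
\[
\widehat\cA(\alpha)=\lambda_\alpha^{-1}\,\E_{\boldsymbol z}\big[\tr[\cA\rho(\boldsymbol z)]\,\chi_\alpha(\boldsymbol z)\big],\qquad \boldsymbol z\sim\mathcal{U}\big(\{-1,1\}^{n(\go^2-1)}\big).
\]
Consequently, if $\rho_i=\rho(\boldsymbol z_i)\sim\mathcal{U}(S)$ for $i=1,\dots,m$ and we record the samples $(\rho_i,\tr[\cA\rho_i])$, then $\widehat w_\alpha:=\frac1m\sum_{i=1}^m\lambda_\alpha^{-1}\tr[\cA\rho_i]\,\chi_\alpha(\boldsymbol z_i)$ is an unbiased estimator of $\widehat\cA(\alpha)$; it is real (since $\cA$ and the $M_\alpha$ are Hermitian), and its summands have modulus at most $R:=\big(\tfrac32(\go^2-\go)\big)^{d}t$, using $|\tr[\cA\rho_i]|\le\|\cA\|_{\mathrm{op}}\le t$ and $|\chi_\alpha|\le1$.

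Next I would combine concentration with the generic thresholding lemma. There are at most $N\le(n\go^2)^d$ indices $\alpha$ with $|\alpha|\le d$, and $\widehat\cA$ is supported on these. By Hoeffding's inequality, $\Pr[|\widehat w_\alpha-\widehat\cA(\alpha)|\ge\eta]\le 2e^{-m\eta^2/(2R^2)}$ for each $\alpha$, so a union bound over the $N$ indices (the same $m$ samples serving all of them) gives $\|\widehat w-\widehat\cA\|_\infty\le\eta$ with probability $\ge1-\delta$ once $m\ge C_1 R^2\eta^{-2}\log(N/\delta)$, i.e. $m=\mathcal{O}\big(R^2\eta^{-2}d\log(n\go/\delta)\big)$. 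By Theorem~\ref{thm:GM}, $\|\widehat\cA\|_{\frac{2d}{d+1}}\le B:=C(d,\go)\|\cA\|_{\mathrm{op}}\le\big(\tfrac32(\go^2-\go)\big)^{d}\,\textnormal{BH}^{\le d}_{\{\pm1\}}\,t$. Applying Theorem~\ref{thm:generic-EI} with $v=\widehat\cA$, $w=\widehat w$, this $B$, and threshold $\eta(1+\sqrt{d+1})$ yields $\widetilde\cA:=\sum_{|\alpha|\le d}\widetilde w_\alpha M_\alpha$ satisfying
\[
\|\cA-\widetilde\cA\|_2^2=\|\widehat\cA-\widehat{\widetilde{\cA}}\|_{\ell^2}^2\le\big(e^5\eta^2 dB^{2d}\big)^{1/(d+1)},
\]
the first equality being the orthonormality of the tensor Gell-Mann basis in the normalized Hilbert--Schmidt norm. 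Choosing $\eta^2=\eps^{d+1}/(e^5 dB^{2d})$ pushes the right-hand side below $\eps$; back-substituting gives $m=\mathcal{O}\big(R^2 B^{2d}\eps^{-(d+1)}d^2\log(n\go/\delta)\big)$, and estimating $R^2B^{2d}\le(\go t)^{Cd^2}$ — via $\textnormal{BH}^{\le d}_{\{\pm1\}}\le C_0^{\sqrt{d\log d}}$ (Theorem~\ref{thm:boolean bh}), $\sqrt{d\log d}\le d^{3/2}$, and $t\ge1$ without loss of generality — together with $\log(n\go/\delta)=\mathcal{O}(\log(n/\delta))$, produces the claimed sample count $\mathcal{O}\big((\go t)^{Cd^2}d^2\eps^{-d-1}\log(n/\delta)\big)$.

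I expect the concentration step and the constant bookkeeping to be routine; the single content-bearing ingredient is the quantum Fourier-sampling identity, and it succeeds precisely because the density matrices of Lemma~\ref{action} are simultaneously (i)~product states, hence preparable and samplable, and (ii)~engineered so that each $\tr[M_\alpha\rho(\boldsymbol z)]$ is a lone Boolean monomial whose coefficient lies in $\big[\big(\tfrac32(\go^2-\go)\big)^{-d},1\big]$ — the very structure behind the dimension-free reduction to the Boolean-cube BH inequality. I work in the Gell-Mann basis deliberately: there the reduction is degree-preserving (unlike the non-prime Heisenberg--Weyl case of Theorem~\ref{thm:bh HW nonprime}), so the exponent of $\eps$ remains $-(d+1)$ rather than degrading to roughly $-(\go-1)d$.
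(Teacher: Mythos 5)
Your proposal is correct and follows essentially the same route as the paper's proof: sample uniformly from the product states $\rho(\boldsymbol z)$ of Lemma~\ref{action}, use the identity $\tr[M_\alpha\rho(\boldsymbol z)]=c^{|\alpha|}\chi_{S(\alpha)}(\boldsymbol z)$ to form unbiased empirical Fourier coefficients, apply Hoeffding plus a union bound over the at most $\sum_{k\le d}\binom{n}{k}(\go^2-1)^k$ low-degree indices, and then invoke Theorem~\ref{thm:GM} together with the Eskenazis--Ivanisvili thresholding of Theorem~\ref{thm:generic-EI} and Parseval to convert the $\ell_\infty$ estimate into an $L_2$ guarantee. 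The bookkeeping of constants matches the paper's up to the unspecified universal constants in the statement.
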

Here $\|\cA\|_2$ denotes the normalized $L_2$ norm induced by the inner product $\langle A,B\rangle:=\go^{-n}\tr[A^\dagger B]$.
Also, for applications it is natural to assume $\|\cA\|_\mathrm{op}$ is bounded independent of $n$.
However we choose to include explicit mention of $\|\cA\|_\mathrm{op}$ here as it will be useful later.

\begin{proof}
    We will first pursue an $L_\infty$ estimate of the Fourier coefficients in the Gell-Mann basis.
    To that end, sample $\vec{\boldsymbol{x}}_1,\ldots, \vec{\boldsymbol{x}}_s\overset{\mathsf{iid}}{\sim} \{-1,1\}^{n(\go^2-1)}$.
    As in the proof of Theorem \ref{thm:GM}, for any such $\vec{\boldsymbol x}$ we partition indices as $\vec{\boldsymbol{x}} = (\boldsymbol{x}_1,\ldots, \boldsymbol{x}_n)\in(\{-1,1\}^{\go^2-1})^n$ with each $\boldsymbol{x}_\ell$, $1\leq\ell\leq n$, corresponding to a qudit.
    Each $\boldsymbol{x}_\ell$ is further partitioned as
    \[\boldsymbol{x}_\ell=(x^{(\ell)},y^{(\ell)},z^{(\ell)})\in\{-1,1\}^{\binom{\go}{2}}\times\{-1,1\}^{\binom{\go}{2}}\times\{-1,1\}^{\go-1}\,,\]
    with each sub-coordinate associated with a specific Gell-Mann basis element for that qudit.

    Again for each $\vec{\boldsymbol{x}}$, for each qudit $\ell\in[n]$ form the mixed state
    \[\rho(x^{(\ell)},y^{(\ell)},z^{(\ell)})=\frac{1}{3\binom{\go}{2}}\left(\sum_{1\le j<k\le \go}  A_{jk}^{(x^{(\ell)}_{jk})} + \sum_{1\le j<k\le \go} B_{jk}^{(y^{(\ell)}_{jk})} + \sum_{m=1}^{\go-1} z^{(\ell)}_m \tfrac{1}{\sqrt{2\go}}\bC_m+ \tfrac{\go-1}{2}\cdot  \un\right).\]
    Then we may define for $\vec{\boldsymbol{x}}$ the $n$ qudit mixed state
    \[\rho(\vec{\boldsymbol{x}}) = \bigotimes_{\ell=1}^{n}\rho(x^{(\ell)},y^{(\ell)},z^{(\ell)})\]
    and consider the function
    \[f_\cA(\vec{\boldsymbol{x}}):=\tr[\cA \cdot \rho(\vec{\boldsymbol{x}})].\]
    Let $S(\alpha)$ denote the index map from the GM basis to subsets of $[n(\go^2-1)]$.
    With these states in hand and in view of the identity
    \[\widehat{f_{\cA}}\big(S(\alpha)\big)=c^{|\alpha|}\widehat{\cA}(\alpha) \quad \text{with} \quad c:= \frac{\sqrt{\go/2}}{3\binom{\go}{2}}\;<1,\] we may now define the empirical Fourier coefficients
    \[\cW(\alpha) = c^{-|\alpha|}\cdot\frac{1}{s}\sum_{t=1}^sf_{\cA}(\vec{\boldsymbol{x}}_t)\prod_{j\in S(\alpha)}x_j \;=\; c^{-|\alpha|}\frac{1}{s}\sum_{t=1}^s\tr[\cA\cdot \rho(\vec{\boldsymbol{x}}_t)]\prod_{j\in S(\alpha)}x_j.\]
    The coefficient $\cW(\alpha)$ is a sum of bounded i.i.d. random variables each with expectation $\widehat{\cA}(\alpha)$, so by Hoeffding's inequality \cite[Theorem 2.2.6]{vershynin} we have 
    \[\Pr\big[|\cW(\alpha)-\widehat{\cA}(\alpha)|\geq \eta\big]\leq 2\exp(-s\eta^2c^{2|\alpha|})\,.\]
    Taking the union bound, we find the chance of achieving $\ell_\infty$ error $\eta$ is:
    \[\Pr\big[|\cW(\alpha)-\widehat{\cA}(\alpha)|<\eta\text{ for all $\alpha$ with } |\alpha|\leq d\big] \geq 1-2\sum_{k=0}^d\binom{n}{k}\exp(-s\eta^2c^{2d})\,,\]
    which again we shall require to be $\geq 1-\delta$. 

    Applying Theorem \ref{thm:generic-EI} to obtain $\widetilde{\cW}$ and recalling $\|\widehat{\cA}\|_{\frac{2d}{d+1}}\leq \BH^{\leq d}_{\mathrm{GM}(\go)}\|\cA\|_\text{op}$ we find the estimated operator
    \[\widetilde{\cA}:=\sum_\alpha\widetilde{\cW}(\alpha)M_\alpha\]
    has $L_2$-squared error
    \[\|\widetilde{\cA}-\cA\|_2^2\overset{\text{(Parseval)}}{=}\sum_\alpha\big|\widetilde{\cW}(\alpha)-\widehat{\cA}(\alpha)\big|^2\leq \big(e^5\eta^2d(\BH^{\leq d}_{\mathrm{GM}(\go)}\|\cA\|_\text{op})^{2d}\big)^{\frac{1}{d+1}}\,.\]
    Thus to obtain error $\leq \eps$ it suffices to pick $\eta^2= \eps^{d+1}e^{-5}d^{-1}(\BH^{\leq d}_{\mathrm{GM}(\go)}\|\cA\|_\text{op})^{-2d}$, which entails by standard estimates that the algorithm will meet the requirements with a sample count of 
    \[s\geq e^6d^2\big(18\go ^3\BH_{\mathrm{GM}(\go)}^{\leq d}\|\cA\|_\text{op}\big)^{2d}\log\left(\tfrac{2en}{\delta}\right)\eps^{-d-1}\,. \qedhere\]
\end{proof}

\subsubsection{Learning arbitrary qudit observables}
As observed by Huang, Chen, and Preskill in \cite{HCP22}, there are certain distributions $\mu$ of input states $\rho$ for which a low-degree truncation $\widetilde\cA$ of any observable $\cA$ gives a suitable approximation as measured by $\E_{\rho\sim \mu}|\tr[\widetilde\cA\rho]-\tr[\cA\rho]|^2$.
This observation extends easily to qudits, which in turn ends up generalizing the phenomenon in the context of qubits as well.
\label{sec:learning-arb}
\begin{definition}
    For a 1-qudit unitary $U$ let $U_j:=\mathbf{I}^{\otimes j-1}\otimes U\otimes \mathbf{I}^{\otimes n-j}$.
    Then for a probability distribution $\mu$ on $n$-qudit densities and $j\in[n]$ let $\mathop{\mathrm{Stab}}_j(\mu)$ be the set of $U\in \mathrm{U}(\go)$ such that for all densities $\rho$, 
    \[\mu(\rho) = \mu(U_j\rho U_j^{\dagger})\,.\]
\end{definition}
\begin{remark}
    For a set $S$ of states, define $\mathop{\mathrm{Stab}}_j(S)=\{U\in\mathrm{U}(\go):U_jSU_j^\dagger\subseteq S\}$.
    Then it can be seen easily that $\mathop{\mathrm{Stab}}_j(\mu)$ is equal to the intersection of the stabilizers of the level sets of $\mu$.
    That is, $\mathop{\mathrm{Stab}}_j(\mu)=\bigcap_{0\leq r\leq 1}\mathop{\mathrm{Stab}}_j\big(\mu^{-1}(r)\big)$.
\end{remark}

Recall the definition of a unitary $t$-design \cite{DCEL09}.

\begin{definition}
    Consider $P_{t,t}(U)$, a polynomial of degree at most $t$ in the matrix elements of unitary $U\in\mathrm{U}(\go)$ and of degree at most $t$ in the matrix elements of $U^\dagger$.
    Then a finite subset $S$ of the unitary group $\mathrm{U}(\go)$ is a \emph{unitary t-design} if for all such $P_{t,t}$,
    \[\frac{1}{|S|}\sum_{U\in S}P_{t,t}(U)=\E_{U\sim\mathrm{Haar}(\mathrm{U}(\go))}[P_{t,t}(U)]\,.\]
\end{definition}

We are ready to name the distributions for which low-degree truncation is possible without losing much accuracy.
\begin{definition}
    Call a distribution $\mu$ on $n$-qudit densities \emph{locally 2-design invariant} (L2DI) if for all $j\in[n]$, $\mathrm{Stab}_j(\mu)$ contains a unitary $2$-design.
\end{definition}

Of course, the $n$-fold tensor product of Haar-random qudits is an L2DI distribution, but there are many other possible distributions and in general they can be highly entangled.
When $\go=2$ and the $2$-design leaving $\mu$ locally invariant is the single-qubit Clifford group, such distributions are termed \emph{locally flat} in \cite{HCP22}.
For any prime $\go$ the Clifford group on $\mathcal{H}_\go$ is a 2-design \cite{GAE}.
Importantly, however, any distribution just on classical inputs $\ket{x}, x\in \{0,1,\ldots, \go-1\}^n$ is not L2DI, as a consequence of the following general observation:

\begin{proposition}
    Suppose $\mu$ is an L2DI distribution over pure product states $\rho = \bigotimes_{j=1}^n\ketbra{\psi_j}{\psi_j}$.
    For $j\in[n]$ define $S_j=\{\rho_{\{j\}}:\rho\in\mathrm{supp}(\mu)\}$, where $\rho_{\{j\}}$ denote the marginal state on qudit $j$. 
    Then $|S_j|\geq \go^2$.
\end{proposition}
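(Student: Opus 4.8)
The plan is to exploit that $S_j$ must be stable under conjugation by a unitary $2$-design and then use the first two Haar moments to force $|S_j|$ to be large.

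First, since every $\rho\in\mathrm{supp}(\mu)$ is a pure product state, each marginal $\rho_{\{j\}}$ is a rank-one projection, so $S_j$ consists of pure states on $\C^\go$. Fix any $P\in S_j$ (possible as $\mu$ is a probability measure) and let $D\subseteq\mathrm{Stab}_j(\mu)$ be a unitary $2$-design, which exists by the L2DI hypothesis. For $U\in D$ and $\rho\in\mathrm{supp}(\mu)$ with $\rho_{\{j\}}=P$ we have $\mu(U_j\rho U_j^\dagger)=\mu(\rho)>0$, hence $U_j\rho U_j^\dagger\in\mathrm{supp}(\mu)$; this is again a pure product state whose $j$-th marginal is $UPU^\dagger$, so $T:=\{UPU^\dagger:U\in D\}\subseteq S_j$. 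Let $\nu$ be the pushforward of the uniform measure on $D$ by $U\mapsto UPU^\dagger$; it is supported on $T$.

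Next, a $2$-design is in particular a $1$-design, and $(UPU^\dagger)^{\otimes 2}$ has entries of degree $2$ in the entries of $U$ and $2$ in those of $U^\dagger$, so the design property gives the Haar moments
\[
\E_\nu[Q]=\E_{U\sim\mathrm{Haar}}[UPU^\dagger]=\tfrac1\go\un,\qquad
\E_\nu[Q\otimes Q]=\E_{U\sim\mathrm{Haar}}\big[(UPU^\dagger)^{\otimes2}\big]=\tfrac{1}{\go(\go+1)}\big(\un+\mathbb{S}\big),
\]
where $\mathbb{S}$ is the swap operator on $\C^\go\otimes\C^\go$. Work in the real inner-product space $V$ of $\go\times\go$ Hermitian matrices with $\langle A,B\rangle=\tr[AB]$ (so $\dim V=\go^2$). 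Using $\tr[(H\otimes H)\mathbb{S}]=\tr[H^2]$, for every $H\in V$ one computes
\[
\mathrm{Var}_\nu\big(\tr[QH]\big)=\frac{\tr[H]^2+\tr[H^2]}{\go(\go+1)}-\frac{\tr[H]^2}{\go^2}=\frac{1}{\go(\go+1)}\Big(\tr[H^2]-\tfrac1\go\tr[H]^2\Big),
\]
which by Cauchy--Schwarz ($\tr[H]^2=\langle H,\un\rangle^2\le\go\,\tr[H^2]$) is $\ge 0$ and vanishes exactly when $H\in\R\un$. Hence the positive semidefinite quadratic form $\beta(H):=\mathrm{Var}_\nu(\tr[QH])=\sum_{Q\in T}\nu(Q)\,\langle Q-\tfrac1\go\un,\,H\rangle^2$ has kernel exactly $\R\un$, hence rank $\go^2-1$. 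Since in coordinates $\beta=\sum_{Q\in T}\nu(Q)\,(Q-\tfrac1\go\un)(Q-\tfrac1\go\un)^{\mathsf T}$ is a sum of $|T|$ rank-one matrices, $|T|\ge\go^2-1$; moreover $\sum_{Q\in T}\nu(Q)(Q-\tfrac1\go\un)=\E_\nu[Q]-\tfrac1\go\un=0$ with all $\nu(Q)>0$, so the vectors $\{Q-\tfrac1\go\un:Q\in T\}$ span a $(\go^2-1)$-dimensional subspace yet satisfy a nontrivial linear relation, which forces $|T|\ge\go^2$. As $T\subseteq S_j$, we conclude $|S_j|\ge\go^2$.

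The crux — and the only place requiring care — is the final off-by-one step: rank-counting with the covariance form alone yields merely $\go^2-1$, and one must bring in the first-moment ($1$-design) identity to produce the extra linear dependency among the centered marginals and reach $\go^2$. A minor bookkeeping point is confirming the Haar second-moment formula $\E_{\mathrm{Haar}}[(U\ketbra{\psi}{\psi}U^\dagger)^{\otimes2}]=\tfrac{1}{\go(\go+1)}(\un+\mathbb{S})$ and checking the bidegree of $(UPU^\dagger)^{\otimes2}$ so the $2$-design definition applies verbatim.
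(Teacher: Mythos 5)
Your proof is correct. The first half coincides with the paper's argument: you fix a marginal $P\in S_j$, use the stabilizing $2$-design $D$ guaranteed by the L2DI hypothesis to place the orbit $T=\{UPU^\dagger:U\in D\}$ inside $S_j$, and observe that $T$ (with the pushforward weights $\nu$) is a weighted complex-projective $2$-design. Where you diverge is the cardinality bound: the paper simply cites Scott's ``absolute bound'' that any complex-projective $2$-design in dimension $\go$ has at least $\go^2$ elements, whereas you reprove this from scratch. Your covariance computation is right ($\mathrm{Var}_\nu(\tr[QH])=\frac{1}{\go(\go+1)}(\tr[H^2]-\frac{1}{\go}\tr[H]^2)$, vanishing exactly on $\R\un$), so the form $\beta=\sum_{Q\in T}\nu(Q)\,v_Qv_Q^{\mathsf T}$ with $v_Q=Q-\frac{1}{\go}\un$ has rank $\go^2-1$, giving $|T|\ge\go^2-1$; and the first-moment relation $\sum_Q\nu(Q)v_Q=0$ with strictly positive weights supplies the extra dependency that pushes this to $\go^2$. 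This buys a self-contained proof (and one that transparently covers the \emph{weighted} design arising from an orbit with multiplicities), at the cost of a page of moment bookkeeping that the paper outsources to the literature. The only point worth tightening is the sentence ``$\mu(U_j\rho U_j^\dagger)=\mu(\rho)>0$, hence $U_j\rho U_j^\dagger\in\mathrm{supp}(\mu)$,'' which reads $\mathrm{supp}(\mu)$ as the set of atoms; for non-discrete $\mu$ you should instead argue that conjugation by $U_j$ is a homeomorphism preserving $\mu$ and hence preserves the topological support --- a one-line fix that the paper glosses over as well.
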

\begin{proof}
    By the L2DI property we have a 2-design $X\subseteq\mathrm{SU}(\go)$ under which $\mu$ is locally invariant.
    Let $\rho$ be a pure state in $S_j$ and note that $P:=\{U\rho U^\dagger: U\in X\}\subseteq S_j$.
    On the other hand, $P$ forms a complex-projective 2-design because it is the orbit of a unitary 2-design \cite{Dankert, DCEL09}.
    And any complex-projective 2-design in $\go$ dimensions must have cardinality at least $\go^2$ \cite[Theorem 4]{Scott_2006}.
\end{proof}


The truncation theorem for qudits goes through much the same way as it does for locally flat qubit distributions in \cite{HCP22}.
\begin{theorem}
    \label{thm:L2DI-NS}
    Let $\cA$ be an operator on $\mathcal{H}_\go^{\otimes n}$ and $\mu$ a probability distribution on densities.
    Then if $\mu$ is L2DI we have
    \[\E_{\rho\sim\mu}|\tr[\cA\rho]|^2\leq \sum_\alpha\big(\tfrac{\go}{\go^2-1}\big)^{|\alpha|}|\widehat{\cA}(\alpha)|^2\,.\]
\end{theorem}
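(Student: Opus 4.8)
The plan is to reduce the $n$-qudit estimate to a product of single-qudit averages by exploiting the tensor structure of $\mu$'s local invariance, then to compute the one-qudit average using the unitary 2-design (equivalently Schur-Weyl/Weingarten calculus at $t=2$). First I would expand $|\tr[\cA\rho]|^2 = \sum_{\alpha,\beta}\widehat{\cA}(\alpha)\overline{\widehat{\cA}(\beta)}\,\tr[M_\alpha\rho]\overline{\tr[M_\beta\rho]}$ and take $\E_{\rho\sim\mu}$ inside. The key maneuver is that for each coordinate $j$, $\mathrm{Stab}_j(\mu)$ contains a unitary 2-design $X_j$, so $\E_{\rho\sim\mu}[\,\cdot\,] = \E_{\rho\sim\mu}\E_{U\sim X_j}[\,\cdot\, \text{ with }\rho\mapsto U_j\rho U_j^\dagger\,]$, and by the 2-design property the inner average equals the Haar average over $\mathrm{U}(\go)$ acting on qudit $j$. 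Iterating over $j=1,\dots,n$ (the local unitaries on different qudits commute, and each leaves $\mu$ invariant, so we may insert all $n$ averages simultaneously) reduces the problem to computing $\E_{U_1,\dots,U_n \text{ Haar}}\big[\tr[\cA\,U\rho U^\dagger]\,\overline{\tr[\cA\,U\rho U^\dagger]}\big]$ for a fixed product state $\rho=\bigotimes_j \ketbra{\psi_j}{\psi_j}$ — and by convexity it suffices to bound this for such product states since $\mu$ is supported on (mixtures of) them; actually one gets an identity in terms of the local reduced states.

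The heart of the computation is the single-qudit second-moment integral. For one qudit, writing $\sigma=\ketbra{\psi}{\psi}$, I would compute $\E_{U\sim\mathrm{Haar}(\mathrm{U}(\go))}\big[(U\sigma U^\dagger)\otimes (U\sigma U^\dagger)\big]$, which by Schur's lemma is a combination $a\,\mathbf{I}+b\,\mathrm{SWAP}$ on $\C^\go\otimes\C^\go$ with coefficients determined by tracing against $\mathbf{I}$ and $\mathrm{SWAP}$; explicitly $a=b=\frac{1}{\go(\go+1)}$ since $\tr[\sigma]=\tr[\sigma^2]=1$. From this one reads off, for any two traceless Hermitian $P,Q$ in the single-qudit Gell-Mann basis, $\E_U\big[\tr[P U\sigma U^\dagger]\,\tr[Q U\sigma U^\dagger]\big] = \frac{1}{\go(\go+1)}\big(\tr[P]\tr[Q]+\tr[PQ]\big) = \frac{1}{\go(\go+1)}\tr[PQ]$, while for $P=Q=\mathbf{I}$ it is $1$ and the cross terms $\mathbf{I}$ vs. traceless vanish. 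Using the GM normalization $\frac1\go\tr[M_\alpha^\dagger M_\beta]=\delta_{\alpha\beta}$, i.e. $\tr[M_\alpha M_\beta]=\go\,\delta_{\alpha\beta}$ for traceless elements, the single-qudit average of $\tr[M_a\sigma]\overline{\tr[M_b\sigma]}$ equals $\delta_{ab}$ when $M_a=M_b=\mathbf{I}$ and $\delta_{ab}\cdot\frac{\go}{\go+1}$ when $M_a=M_b$ is one of the $\go^2-1$ traceless basis elements (and $0$ otherwise); hence the per-qudit contraction forces $\beta=\alpha$ and contributes a factor $1$ at each coordinate where $\alpha_j=\mathbf{I}$ and $\frac{\go}{\go+1}$...

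— wait, I should double-check the claimed constant: the stated bound has $\big(\tfrac{\go}{\go^2-1}\big)^{|\alpha|}$, not $\big(\tfrac{\go}{\go+1}\big)^{|\alpha|}$, so the correct normalization must be $\tr[M_\alpha^2]=\go^2-1$ per nontrivial factor (the GM matrices above are scaled so that $\|M_\alpha\|_2^2=\go^2-1$ in the unnormalized trace, consistent with $\sum_\alpha|\widehat{\cA}(\alpha)|^2\cdot(\go^2-1)^{|\alpha|}$-type Parseval in the convention of the paper). Re-running the one-qudit integral with this normalization, $\E_U\,\tr[M_a\sigma]\overline{\tr[M_b\sigma]} = \delta_{ab}\frac{\tr[M_a^2]}{\go(\go+1)} = \delta_{ab}\frac{\go^2-1}{\go(\go+1)} = \delta_{ab}\frac{\go-1}{\go}$ for traceless $a$; so tensoring over the $|\alpha|$ nontrivial coordinates gives $\E_{\rho\sim\mu}|\tr[\cA\rho]|^2 \le \sum_\alpha \big(\tfrac{\go-1}{\go}\big)^{|\alpha|}|\widehat{\cA}(\alpha)|^2$, and one converts to $\big(\tfrac{\go}{\go^2-1}\big)^{|\alpha|}$ precisely when the Fourier coefficients $\widehat{\cA}(\alpha)$ are defined with the $\go^{-n}\tr[\,\cdot\,]$ normalization so that $M_\alpha$ has squared norm $(\go^2-1)^{|\alpha|}/\go^n$ — I would pin this normalization down carefully at the start.

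The main obstacle I anticipate is precisely this bookkeeping of normalizations: matching the paper's inner-product convention $\langle A,B\rangle=\go^{-n}\tr[A^\dagger B]$ against the unnormalized traces appearing in the Weingarten calculation, so that the per-qudit factor comes out to exactly $\tfrac{\go}{\go^2-1}$. A secondary subtlety is justifying the reduction to product states: for general $\mu$ supported on pure product states one uses linearity of $\E_{\rho\sim\mu}$ plus the fact that $\tr[\cA\rho]$ is linear in $\rho$, and the local Haar-averaging is applied coordinate-by-coordinate using that the $X_j$ act on disjoint tensor factors so the averages genuinely commute and can be performed in sequence; I would state this as a short lemma. The cross-term vanishing ($\alpha\ne\beta$ contributions dying after the coordinatewise Haar average, including the $\mathbf{I}$-vs-traceless cancellations) is immediate from the Schur-lemma form of the second moment, so no extra work is needed there. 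Everything else is the routine $2$-design computation sketched above.
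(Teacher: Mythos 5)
Your overall strategy is the same as the paper's: expand in the Gell--Mann basis, use the local $2$-designs in $\mathrm{Stab}_j(\mu)$ to replace the coordinate-wise average by a Haar average, and evaluate the second moment by Schur's lemma. But there is a genuine gap in your reduction to pure product states. The theorem makes no assumption that $\mu$ is supported on (mixtures of) product states --- the paper explicitly emphasizes that L2DI distributions ``can be highly entangled'' --- and since $|\tr[\cA\rho]|^2$ is quadratic in $\rho$, convexity only lets you pass from a mixture to its components; an entangled pure state is not a mixture of product states, so it is never reached by this reduction. Relatedly, the factorization of the $n$-qudit second moment into a product of single-qudit integrals $\E_{U_j}\tr[M_{\alpha_j}U_j\sigma U_j^\dagger]\overline{\tr[M_{\beta_j}U_j\sigma U_j^\dagger]}$ requires the state to factor across coordinates. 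The way around this (the paper's route) is to keep the $2$-design average on the \emph{observable} side: compute $\E_{U_j}[(U_j^\dagger M_{\alpha_j}U_j)\otimes(U_j^\dagger M_{\beta_j}U_j)]=\tr[\mathbf{F}\,M_{\alpha_j}\!\otimes M_{\beta_j}]\cdot\tfrac{1}{\go^2-1}(\mathbf{F}-\tfrac{1}{\go}\mathbf{I})$ with $\mathbf{F}$ the \textsf{SWAP}, which forces $\alpha=\beta$ and yields
\[
(*)=\delta_{\alpha\beta}\big(\tfrac{\go}{\go^2-1}\big)^{|\alpha|}\,\E_\rho\tr\big[(\mathbf{F}-\tfrac{1}{\go}\mathbf{I})^{\otimes|\alpha|}\rho_{S_\alpha}\otimes\rho_{S_\alpha}\big],
\]
valid for arbitrary (entangled, mixed) $\rho$; the residual trace is then bounded by $1$ for \emph{any} density on $S_\alpha$ by expanding $\mathbf{F}=\tfrac{1}{\go}\sum_\beta M_\beta\otimes M_\beta$ and invoking purity, $\tr[\rho_{S_\alpha}^2]\leq 1$. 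You need this last step; it does not follow from single-qudit computations.

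A secondary but real error is the normalization you settled on: the paper's GM matrices are orthonormal for $\tfrac{1}{\go}\tr[A^\dagger B]$, so $\tr[M_\alpha^2]=\go$, not $\go^2-1$ (check $\bA_{jk}^2=\tfrac{\go}{2}(E_{jj}+E_{kk})$). With the correct normalization your pure-product single-qudit integral gives $\tfrac{\go\,\delta_{ab}}{\go(\go+1)}=\tfrac{\delta_{ab}}{\go+1}$ per nontrivial site, not $\tfrac{\go-1}{\go}$; the stated factor $\tfrac{\go}{\go^2-1}$ arises in the paper as $\tr[\mathbf{F}M_{\alpha_j}\otimes M_{\beta_j}]=\go\delta_{\alpha_j\beta_j}$ divided by the $\go^2-1$ from the Schur decomposition, multiplied by the residual trace term which is merely bounded by $1$ rather than evaluated.
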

The reader will notice a marked similarity to the Fourier-basis expression of noise stability $\E_{x\sim_\delta y}f(x)f(y)=\langle f, \mathrm{T}\hspace{-1pt}_\delta f\rangle = \sum_{S\subseteq[n]}\delta^{|S|}\widehat{f}(S)^2$ for Boolean functions (\emph{e.g.}, \cite{Odonnell}).
\begin{proof}
    Expanding $\cA$ in the Gell-Mann basis we obtain
    \begin{align}
    \E_{\rho\sim\mu}|\tr[\cA\rho]|^2 \;=\; \E_\rho \Big|\sum_{\alpha}\widehat{\cA}(\alpha)\tr[M_\alpha\cdot\rho]\Big|^2
    \;=\; \sum_{\alpha,\beta}\overline{\widehat{\cA}(\alpha)}\widehat{\cA}(\beta)\underbrace{\E_\rho\tr[M_\alpha\otimes M_\beta \cdot \rho\otimes\rho]}_{(*)}\,.
    \label{eq:difference-decomp}
    \end{align}
    
    Now for fixed $\alpha,\beta$ we examine the expectation $(*)$.
    For $j\in[n]$ let $D_j$ be a $2$-design in $\mathrm{Stab}_j(\mu)$ guaranteed to exist by the L2DI property and define $D=\bigotimes_{j=1}^nD_j$ in the natural way.
    Then because $D_j$ leaves $\mu$ invariant,
    \begin{align*}
    (*)&=\E_\rho\E_{U\sim\mathcal{U}(D)}\tr[M_\alpha\otimes M_\beta \cdot U\rho U^\dagger\otimes U\rho U^\dagger]\\
    &=\E_\rho\tr\left[\bigotimes_{j=1}^n\E_{U_j\sim \mathcal{U}(D_j)}(U_j^\dagger M_{\alpha_j}U_j)\otimes(U_j^\dagger M_{\beta_j}U_j) \cdot \rho\otimes\rho\right].
    \end{align*}

    For a single coordinate $j$ let us study $(U_j^\dagger M_{\alpha_j}U_j)\otimes(U_j^\dagger M_{\beta_j}U_j)$.
    Certainly if $M_{\alpha_j}=M_{\beta_j}=\mathbf{I}$ then
    \begin{equation}
    \label{eq:identity-case}
        \E_{U_j}[(U_j^\dagger M_{\alpha_j}U_j)\otimes(U_j^\dagger M_{\beta_j}U_j)]=\mathbf{I}\otimes\mathbf{I}\,.
    \end{equation}
    Now suppose at least one of $M_{\alpha_j},M_{\beta_j}\neq \mathbf{I}$.
    Then $\tr[M_{\alpha_j}\otimes M_{\beta_j}]=0$ because non-identity elements of $\mathrm{GM}(\go)$ are traceless.
    Letting $\mathbf{F} := \sum_{j,k=1}^\go\ketbra{jk}{kj}$ denote a generalized \textsf{SWAP} operator, we have by the previous observation and properties of unitary 2-designs (see \emph{e.g.}, \cite[\S II.A]{GAE}) that 
    \begin{align}
        &\E_{U_j}\big[(U_j^\dagger M_{\alpha_j}U_j)\otimes (U_j^\dagger M_{\beta_j}U_j)\big] \\
        =& \frac{\tr[(\mathbf{I}+\mathbf{F})\, M_{\alpha_j}\!\otimes M_{\beta_j}]}{\go^2+\go}\left(\frac{\mathbf{I}+\mathbf{F}}{2}\right) + \frac{\tr[(\mathbf{I}-\mathbf{F})\,M_{\alpha_j}\!\otimes  M_{\beta_j}]}{\go^2-\go}\left(\frac{\mathbf{I}-\mathbf{F}}{2}\right)\nonumber\\
        =& \tr[\mathbf{F}\cdot M_{\alpha_j}\!\otimes M_{\beta_j}]\cdot\frac{1}{\go^2-1}\left(\mathbf{F}-\frac{1}{\go}\mathbf{I}\right)\,.
        \label{eq:2-design}
    \end{align}
    
    Let $S_\alpha = \{j:\alpha_j\neq \mathbf{I}\}$.
    Then combining Eqs. \eqref{eq:identity-case}, \eqref{eq:2-design} with the fact $\tr[\mathbf{F}\cdot M_{\alpha_j}\otimes M_{\beta_j}] = \tr[M_{\alpha_j}M_{\beta_j}] = \go\delta_{{\alpha_j}{\beta_j}}$ we have
    \begin{align}
        \label{eq:star-delta-and-flip}
        (*)&= \delta_{\alpha\beta}\textstyle\left(\frac{\go}{\go^2-1}\right)^{|\alpha|}\E_\rho\tr\big[(\mathbf{F}-\tfrac{1}{\go}\mathbf{I})^{\otimes|\alpha|}\rho_{S_\alpha}\!\otimes\rho_{S_\alpha}\big]\,.
    \end{align}
    Now we claim for any $\rho$ on $m$ qudits,
    \begin{equation}
    \label{eq:F-bound}
        \tr\big[(\mathbf{F}-\tfrac{1}{\go}\mathbf{I})^{\otimes m}\rho\otimes\rho\big]\leq 1\,.
    \end{equation}
    To see this, notice $\mathbf{F}=\frac{1}{\go}\sum_{\alpha}M_\alpha\otimes M_\alpha$, a sum of all GM basis elements doubled-up.
    Hence
    \begin{align*}
        \tr\big[(\mathbf{F}-\tfrac{1}{\go}\mathbf{I})^{\otimes m}\rho\otimes\rho\big] &=\tr\left[{\textstyle \bigotimes_{j=1}^m}\left(\tfrac{1}{\go}{\textstyle\sum_{\beta_j\neq \mathrm{Id}}}\,M_{\beta_j}\otimes M_{\beta_j} \right)\cdot\rho\otimes\rho\right] \\
        &= \frac{1}{\go^n}\sum_{\beta, |\beta|=m}\tr[M_\beta\otimes M_\beta\cdot\rho\otimes\rho]\\
        &\leq \frac{1}{\go^n}\sum_{\text{all}\,\beta}\tr[M_\beta \rho]^2\\
        &= \tr[\rho^2]\;\;\leq\;\; 1\,.
    \end{align*}
    Combining Eqs. \eqref{eq:star-delta-and-flip} and \eqref{eq:F-bound} we in summary have the bound
    \[(*)\leq \delta_{\alpha\beta}\left(\frac{\go}{\go^2-1}\right)^{|\alpha|}\,.\]
    Returning to Eq. \eqref{eq:difference-decomp} we conclude
    \begin{align*}
        \E_{\rho\sim\mu}|\tr[\cA\rho]|^2 \;= \sum_{\alpha} \left(\frac{\go}{\go^2-1}\right)^{-|\alpha|}|\widehat{\cA}(\alpha)|^2\,. & \qedhere
    \end{align*}
\end{proof}

\begin{definition}
    \label{defn:truncation}
    Let $\cA$ be an operator with Gell-Mann decomposition $\cA =\sum_\alpha \widehat{\cA}_\alpha  M_\alpha$.
    Then for $d\in[n]$ define its \emph{degree-$d$ truncation} to be $\cA^{\leq d} = \sum_{\alpha, |\alpha|\leq d}\widehat{\cA}_\alpha M_\alpha$.
\end{definition}
\begin{remark}
The choice of the GM decomposition here is essentially without loss of generality:
consider any orthogonal basis $B$ for $M_\go(\C)$ containing the identity and define $\cA^{\leq d}_B$ analogously to Definition \ref{defn:truncation}, keeping the definition of degree analogous to that for the GM basis too.
Then we have $\cA^{\leq d}_B=\cA^{\leq d}$, as can be seen easily by expanding one basis in the other. 
\end{remark}

\begin{corollary}
\label{cor:low-degree-approx}
$\E_{\rho\sim \mu}|\tr[\cA\rho]-\tr[\cA^{\leq d}\rho]|^2\leq \left(\frac{\go}{\go^2-1}\right)^{d}\|\cA\|_2^2$.
\end{corollary}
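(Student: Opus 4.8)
The plan is to deduce this immediately from Theorem~\ref{thm:L2DI-NS} applied to the ``high-degree part'' $\cA - \cA^{\le d}$. First I would observe that by Definition~\ref{defn:truncation} the Gell-Mann expansion of $\cA - \cA^{\le d}$ is $\sum_{|\alpha|>d}\widehat{\cA}(\alpha)M_\alpha$; in other words the Fourier coefficients of $\cA-\cA^{\le d}$ agree with those of $\cA$ for $|\alpha|>d$ and vanish otherwise. Since Theorem~\ref{thm:L2DI-NS} holds for \emph{any} operator on $\mathcal{H}_\go^{\otimes n}$ and the distribution $\mu$ is unchanged, applying it to $\cA-\cA^{\le d}$ gives
\[
\E_{\rho\sim\mu}\big|\tr[\cA\rho]-\tr[\cA^{\le d}\rho]\big|^2\;\le\;\sum_{|\alpha|>d}\Big(\tfrac{\go}{\go^2-1}\Big)^{|\alpha|}|\widehat{\cA}(\alpha)|^2\,.
\]

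Next I would note that $\tfrac{\go}{\go^2-1}\le 1$ for every $\go\ge 2$ (equivalently $\go^2-\go-1\ge 0$), so the weight $(\go/(\go^2-1))^{|\alpha|}$ is non-increasing in $|\alpha|$ and is therefore at most $(\go/(\go^2-1))^{d}$ on the range $|\alpha|>d$. Pulling this factor out of the sum and then extending the summation back to all $\alpha$ yields
\[
\sum_{|\alpha|>d}\Big(\tfrac{\go}{\go^2-1}\Big)^{|\alpha|}|\widehat{\cA}(\alpha)|^2\;\le\;\Big(\tfrac{\go}{\go^2-1}\Big)^{d}\sum_{\alpha}|\widehat{\cA}(\alpha)|^2\;=\;\Big(\tfrac{\go}{\go^2-1}\Big)^{d}\|\cA\|_2^2,
\]
where the last equality is Parseval's identity for the orthonormal Gell-Mann basis with respect to $\langle A,B\rangle=\go^{-n}\tr[A^\dagger B]$, exactly as invoked in the proof of Theorem~\ref{thm:qudit-ld-learning}. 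Chaining the two displays completes the argument.

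I do not expect any real obstacle here: the corollary is a direct specialization of Theorem~\ref{thm:L2DI-NS}, with all the substantive work (the $2$-design averaging and the \textsf{SWAP}-operator bound) already done there. The only point needing a moment's care is the elementary estimate $\go/(\go^2-1)\le 1$, which is what guarantees the Fourier weights decay in $|\alpha|$ so that the worst case of the high-degree tail occurs at degree $d+1$ and is dominated by the stated bound at degree $d$; everything else is Parseval and monotonicity.
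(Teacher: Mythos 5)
Your proposal is correct and matches the paper's own argument: both apply Theorem~\ref{thm:L2DI-NS} to $\cA-\cA^{\le d}$, whose Gell-Mann coefficients are those of $\cA$ supported on $|\alpha|>d$, then bound the weight $(\go/(\go^2-1))^{|\alpha|}$ by $(\go/(\go^2-1))^{d}$ and invoke Parseval. Your write-up simply makes explicit the monotonicity step that the paper leaves implicit.
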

\begin{proof}
    We apply Theorem \ref{thm:L2DI-NS} to obtain
    \begin{align*}
    \E_{\rho\sim \mu}|\tr[\cA\rho]-\tr[\cA^{\leq d}\rho]|^2 &= \E_{\rho\sim \mu}|\tr[(\cA-\cA^{\leq d})\rho]|^2\\
    &\leq \sum_{\alpha, |\alpha|>d}\left(\tfrac{\go}{\go^2-1}\right)^{|\alpha|}|\widehat{\cA}(\alpha)|^2 \leq \left(\tfrac{\go}{\go^2-1}\right)^d\|\cA\|_2^2\,.\qedhere
    \end{align*}
\end{proof}

\begin{theorem}
    Let $\cA$ be any observable on $\mathcal{H}_\go^{\otimes n}$, of any degree, with $\|\cA\|_\mathrm{op}\leq 1$.
    Fix an error threshold $\epsilon >0$ and a failure probability $\delta > 0$ and put $t=\log_{\go^2-1}(4/\epsilon)$.
    Then there is a set $S$ of product states such that with a number
    \[s=\mathcal{O}\Big(\log\big(\tfrac{n}{\delta}\big)e^{c\cdot\log^2(\frac{1}{\eps})}(18K^3\|\cA^{\leq t}\|_\mathrm{op})^{2t}\Big)\]
    of samples $(\rho, \tr[\cA \rho])$, $\rho\sim\mathcal{U}(S)$, an approximate operator $\widetilde{\cA}$ may be formed in time $\mathrm{poly}(n)$ with confidence $1-\delta$ satisfying
    \[\E_{\rho\sim\mu}|\tr[\widetilde{\cA}\rho]-\tr[\cA\rho]|^2\leq \eps\]
    for any L2DI distribution $\mu$.
\end{theorem}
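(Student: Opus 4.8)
\emph{Plan.} I would prove this by composing three ingredients already in hand: the low‑degree qudit learning algorithm (Theorem~\ref{thm:qudit-ld-learning}), the thresholding lemma sitting inside it (Theorem~\ref{thm:generic-EI}), and the L2DI low‑degree approximation estimate (Theorem~\ref{thm:L2DI-NS}, i.e.\ Corollary~\ref{cor:low-degree-approx}). Fix the truncation level $t=\log_{\go^2-1}(4/\eps)$ as in the statement (any $t=\Theta(\log(1/\eps))$ works, with the absolute constant absorbed below). The idea: using the given samples $(\rho,\tr[\cA\rho])$ with $\rho\sim\mathcal U(S)$ for the same product‑state set $S$ as in Theorem~\ref{thm:qudit-ld-learning}, run the degree‑$t$ learner to produce a degree‑$\le t$ operator $\widetilde\cA$ that is $L_2$‑close to the truncation $\cA^{\le t}$; then invoke Corollary~\ref{cor:low-degree-approx} to see that $\cA^{\le t}$ is itself mean‑square $\mu$‑close to $\cA$ for every L2DI $\mu$; finally add the two errors through the triangle inequality for the seminorm $B\mapsto(\E_{\rho\sim\mu}|\tr[B\rho]|^2)^{1/2}$.

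\emph{Step 1: learning the low-degree part.} The first point is that one may run the ``degree‑$t$'' routine of Theorem~\ref{thm:qudit-ld-learning} directly on samples of the full, possibly high‑degree $\cA$. Indeed, Lemma~\ref{action} gives $\widehat{f_\cA}(S(\alpha))=c^{|\alpha|}\widehat\cA(\alpha)$ with $c=\sqrt{\go/2}\,/(3\binom{\go}{2})$ for \emph{every} index $\alpha$, so for each fixed $|\alpha|\le t$ the empirical coefficient $\cW(\alpha)=c^{-|\alpha|}s^{-1}\sum_{r}\tr[\cA\rho(\vec{\boldsymbol{x}}_r)]\prod_{j\in S(\alpha)}x_j$ is an unbiased estimator of $\widehat\cA(\alpha)=\widehat{\cA^{\le t}}(\alpha)$ whose summands lie in $[-c^{-t},c^{-t}]$ since $\|\cA\|_{\mathrm{op}}\le1$. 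Hoeffding and a union bound over the $\binom{n}{\le t}$ indices give $\|\cW-\widehat{\cA^{\le t}}\|_\infty\le\eta$ with confidence $1-\delta$ once $s$ is as in Theorem~\ref{thm:qudit-ld-learning}. The only place low‑degreeness enters that theorem is the Bohnenblust--Hille input, which I would supply for $\cA^{\le t}$: Theorem~\ref{thm:GM} gives $\|\widehat{\cA^{\le t}}\|_{2t/(t+1)}\le \BH^{\le t}_{\mathrm{GM}(\go)}\|\cA^{\le t}\|_{\mathrm{op}}=:B$ (note: $\|\cA^{\le t}\|_{\mathrm{op}}$, not $\|\cA\|_{\mathrm{op}}$). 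Applying Theorem~\ref{thm:generic-EI} with $v=\widehat{\cA^{\le t}}$, $w=\cW$, threshold $\eta(1+\sqrt{t+1})$ produces $\widetilde\cW$, hence $\widetilde\cA=\sum_{|\alpha|\le t}\widetilde\cW(\alpha)M_\alpha$ (degree $\le t$) with $\|\widetilde\cA-\cA^{\le t}\|_2^2\le(e^5\eta^2 tB^{2t})^{1/(t+1)}$; I would choose $\eta$ to make this $\le\eps/8$.

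\emph{Step 2: truncation error and combination.} Because $B\mapsto(\E_{\rho\sim\mu}|\tr[B\rho]|^2)^{1/2}$ is a seminorm (Minkowski),
\[
\big(\E_{\rho\sim\mu}|\tr[\widetilde\cA\rho]-\tr[\cA\rho]|^2\big)^{1/2}\le\big(\E_{\rho\sim\mu}|\tr[(\widetilde\cA-\cA^{\le t})\rho]|^2\big)^{1/2}+\big(\E_{\rho\sim\mu}|\tr[(\cA-\cA^{\le t})\rho]|^2\big)^{1/2}.
\]
For the first term, $\widetilde\cA-\cA^{\le t}$ has degree $\le t$, so Theorem~\ref{thm:L2DI-NS} together with $\tfrac{\go}{\go^2-1}\le1$ gives $\E_{\rho\sim\mu}|\tr[(\widetilde\cA-\cA^{\le t})\rho]|^2\le\|\widetilde\cA-\cA^{\le t}\|_2^2\le\eps/8$. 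For the second term, Corollary~\ref{cor:low-degree-approx} with $\|\cA\|_2\le\|\cA\|_{\mathrm{op}}\le1$ gives $\E_{\rho\sim\mu}|\tr[(\cA-\cA^{\le t})\rho]|^2\le(\tfrac{\go}{\go^2-1})^t$, which is $\le\eps/4$ once $t$ equals the stated logarithm times a suitable absolute constant. Squaring the sum of the two square roots yields $\E_{\rho\sim\mu}|\tr[\widetilde\cA\rho]-\tr[\cA\rho]|^2\le\eps$ for any L2DI $\mu$, as required (the precise split of $\eps$ between the two pieces is immaterial).

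\emph{Step 3: sample count, runtime, and the main obstacle.} For the sample count I would substitute $d=t=\Theta(\log(1/\eps))$, operator‑norm bound $\|\cA^{\le t}\|_{\mathrm{op}}$, and accuracy $\Theta(\eps)$ into the count of Theorem~\ref{thm:qudit-ld-learning}, using $\BH^{\le t}_{\mathrm{GM}(\go)}\le(\tfrac32\go^2)^t\BH^{\le t}_{\{\pm1\}}\le(\tfrac32\go^2)^t e^{c\sqrt{t\log t}}$; since $t=\Theta(\log(1/\eps))$, the factors $\eps^{-t-1}$, $(\tfrac32\go^2)^{2t^2}$, $e^{O(t^{3/2})}$ and $t^2$ all collapse into a single $e^{c\log^2(1/\eps)}$, leaving $s=\mathcal{O}\big(\log(\tfrac n\delta)\,e^{c\log^2(1/\eps)}(18\go^3\|\cA^{\le t}\|_{\mathrm{op}})^{2t}\big)$. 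The algorithm computes $\binom{n}{\le t}=\mathrm{poly}(n)$ empirical coefficients, thresholds via Theorem~\ref{thm:generic-EI}, and writes down $\widetilde\cA$ in the Gell-Mann basis, all in $\mathrm{poly}(n)$. There is no deep obstacle here---the statement is essentially a composition of the cited results---and the only point that genuinely needs care is Step~1: justifying that the degree‑$t$ learner applied to samples of a \emph{generic} $\cA$ returns an estimate of $\widehat{\cA^{\le t}}$ (so that Bohnenblust--Hille must be invoked for $\cA^{\le t}$, hence $\|\cA^{\le t}\|_{\mathrm{op}}$ rather than $\|\cA\|_{\mathrm{op}}$ appears), together with the constant bookkeeping in Steps~2--3 that makes the final error genuinely $\le\eps$ and contracts the sample count into the stated form.
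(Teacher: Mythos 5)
Your proposal is correct and follows essentially the same route as the paper: truncate at degree $t=\log_{\go^2-1}(4/\eps)$, bound the truncation error via Corollary~\ref{cor:low-degree-approx}, learn $\cA^{\leq t}$ with the low-degree algorithm of Theorem~\ref{thm:qudit-ld-learning}, and combine via the triangle inequality. Your Step~1 in fact makes explicit a point the paper leaves implicit---that the empirical coefficients computed from samples of the full $\cA$ remain unbiased estimators of $\widehat{\cA^{\leq t}}(\alpha)$, so that Bohnenblust--Hille is applied to $\cA^{\leq t}$ and it is $\|\cA^{\leq t}\|_{\mathrm{op}}$ that enters the sample count---which is a welcome clarification rather than a deviation.
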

\begin{proof}
    Choose the truncation degree $d=\log_{\go^2-1}(4/\eps)$.
    Then the triangle inequality and Corollary \ref{cor:low-degree-approx} give
    \begin{align*}
        \E_{\rho\sim \mu}|\tr[\widetilde{\cA}^{\leq d}\rho]-\tr[\cA\rho]|^2 & \leq 2\E_{\rho\sim \mu}|\tr[{\cA}^{\leq d}\rho]-\tr[\cA\rho]|^2+2\E_{\rho\sim \mu}|\tr[\widetilde{\cA}^{\leq d}\rho]-\tr[\cA^{\leq d}\rho]|^2\\
        &\leq 2\left(\tfrac{1}{\go^2-1}\right)^d+2\|\widetilde{\cA}^{\leq d} - \cA^{\leq d}\|_2^2\\
        &\leq \eps/2 + 2\|\widetilde{\cA}^{\leq d} - \cA^{\leq d}\|_2^2\,.
    \end{align*}
    So we need to choose a number of samples such that with confidence $1-\delta$, the low-degree qudit learning algorithm (Theorem \ref{thm:qudit-ld-learning})  yields a $\cA^{\leq d}$ such that $\|\widetilde{\cA}^{\leq d} - \cA^{\leq d}\|_2^2\leq\eps/4$.
    This requires no more than
    \[C\log\left(\frac{2en}{\delta}\right)e^{C'\log^2(4/\eps)}(18\go^{3}\|\cA^{\leq t}\|_\mathrm{op})^{2t}\]
    samples, where $t=\log_{\go^2-1}(4/\epsilon)$ and $C,C'$ are constants $> 1$.
\end{proof}

This learning theorem may be of interest even in the context of qubits.
In particular, for a small divisor $k$ of $n$, a system of $n$ qubits may be interpreted as $n/k$-many $2^k$-level qudits, and there may be interesting distributions over states in this system which are only L2DI when viewed as ``virtual qudits'' in this way. 

\newcommand{\etalchar}[1]{$^{#1}$}

	\appendix

	\section{Eigenvectors of the Heisenberg--Weyl basis}
		\label{appendix}
	As a complement to Lemmas \ref{lem:HW basis general} and \ref{lem:HW basis prime}, we compute the eigenvectors of $X^\ell Z^m,\gcd(\ell,m)=1$ for readers interested in their concrete forms.

	Let us begin with the simple case of prime $\go$. Assume first that $\ell=0$ and $m\neq 0$, then the eigenvalues of $Z^m$ are
		\[
    \{\om^{jm}:j\in\mathbb{Z}_\go\}=\{\om^{j}:j\in \mathbb{Z}_\go\},
    \]
		since $\go$ is prime. If $\ell\neq 0$, then we may relabel the standard basis $\{e_j:j\in\mathbb{Z}_\go\}$ as $\{e_{j\ell}:j\in\mathbb{Z}_\go\}$. Consider the non-zero vectors 
		\begin{equation}\label{eq:eigenvector prime}
			\zeta_k:=\sum_{j\in \mathbb{Z}_\go}\om^{\frac{1}{2}j(j-1)\ell m-jk}e_{j\ell},\qquad k\in \mathbb{Z}_\go.
		\end{equation}
		Note that the summation in \eqref{eq:eigenvector prime} can be understood as in the sense of mod $\go$. In fact, for any $j'=j+r\go$ for some $r\in \Z$, we have 
\begin{equation*}
\frac{1}{2}j'(j'-1)\ell m-j'k
=\frac{1}{2}j(j-1)\ell m-jk+\frac{1}{2}r\go(r\go+2j-1)\ell m-rk\go
\end{equation*}
A direct computation shows: for all $k\in \mathbb{Z}_\go$
		\begin{align*}
			X^{\ell}Z^m \zeta_k
			&=\sum_{j\in \mathbb{Z}_\go}\om^{\frac{1}{2}j(j-1)\ell m-jk}\cdot \om^{j\ell m}X^{\ell}e_{j\ell}\\
			&=\sum_{j\in \mathbb{Z}_\go}\om^{\frac{1}{2}j(j+1)\ell m-jk}e_{(j+1)\ell}\\
			&=\sum_{j\in \mathbb{Z}_\go}\om^{\frac{1}{2}j(j-1)\ell m-jk+k}e_{j\ell}\\
			&=\om^k\zeta_k.\qedhere
		\end{align*}
		
			\medskip
In general, we have the following result. 
\begin{proposition}
Assume $\go\ge 3$ and $\ell,m\in \Z_\go$ are such that $\gcd(\ell,m)=1$. Let $d:=\gcd(\go, \ell)$ and write $\go=d\go_1$, so that any $k\in \Z_{\go}$ can be uniquely written as $k=s+j\ell$ with $s\in \Z_{d},j\in \Z_{\go_1}$. So the canonical basis of $\C^{\go}$ takes the form
\begin{equation}
(e_k)_{k\in \Z_{\go}}=(e_{s+j\ell})_{s\in \Z_{d},j\in \Z_{\go_1}}.
\end{equation}
\begin{enumerate}
\item If $\go_1$ is odd, then $X^\ell Z^m$ has 
 \begin{equation}\label{eq:eigenvector odd}
 \xi_{s,t}:=\sum_{j\in \Z_{\go_1}}\omega^{\frac{1}{2}j(j-1)\ell m -jdt}e_{s+j\ell}, \qquad (s,t)\in \Z_{d}\times \Z_{\go_1}
 \end{equation}
 as an eigenvector with eigenvalue $\omega^{dt+ms}$. In particular, the set of eigenvalues of $X^\ell Z^m$ is $\Omega_\go.$
\item If $\go_1$ is even, then $X^\ell Z^m$ has
\begin{equation}\label{eq:eigenvector even}
 \eta_{s,t}:=\sum_{j\in \Z_{\go_1}}\omega^{\frac{1}{2}j(j-2)\ell m -jdt}e_{s+j\ell},  \qquad (s,t)\in \Z_{d}\times \Z_{\go_1}
\end{equation}
as an eigenvector with eigenvalue $\omega^{\frac{1}{2}\ell m+dt+ms}$.  In particular, the set of eigenvalues of $X^\ell Z^m$ is  $\Omega_\go$ or $\Omega_{2\go}\setminus\Omega_\go$, depending on $\ell m$ is even or odd. 
\end{enumerate}
\end{proposition}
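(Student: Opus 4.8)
The plan is to verify the proposition by a direct computation, generalizing to an arbitrary modulus the verification given above for prime $\go$. First I would pin down the combinatorial setup: since $\langle\ell\rangle$ coincides with $\langle d\rangle=\{0,d,2d,\dots,(\go_1-1)d\}\subseteq\Z_\go$, a cyclic group of order $\go_1$, the map $j\mapsto j\ell$ bijects $\Z_{\go_1}$ onto $\langle\ell\rangle$, and $\{0,1,\dots,d-1\}$ is a transversal of $\langle\ell\rangle$, so $(s,j)\mapsto s+j\ell$ is a bijection $\Z_d\times\Z_{\go_1}\to\Z_\go$; also $\go_1\ell\equiv0\pmod\go$ since $d\mid\ell$, so $e_{s+j\ell}$ depends only on $j\bmod\go_1$. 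Next I would check that the scalars defining $\xi_{s,t}$ and $\eta_{s,t}$ are well-defined functions of $j\bmod\go_1$: replacing $j$ by $j+\go_1$ changes $\tfrac12 j(j-1)\ell m$ by $\go_1\big(j+\tfrac{\go_1-1}{2}\big)\ell m$ and $\tfrac12 j(j-2)\ell m$ by $\go_1\big(j+\tfrac{\go_1-2}{2}\big)\ell m$, and in each case the stated parity of $\go_1$ makes the parenthesis an integer while $d\mid\ell$ makes the product a multiple of $d\go_1=\go$; the linear term $-jdt$ changes by $-\go t$. This is exactly why the two parities force the two different quadratic forms.

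The computation itself is then short. From $Z^m e_k=\omega^{mk}e_k$ and $X^\ell e_k=e_{k+\ell}$ one has $X^\ell Z^m e_{s+j\ell}=\omega^{m(s+j\ell)}e_{s+(j+1)\ell}$, so in the odd case
\[
X^\ell Z^m\xi_{s,t}=\sum_{j\in\Z_{\go_1}}\omega^{\frac12 j(j-1)\ell m-jdt+ms+jm\ell}\,e_{s+(j+1)\ell}.
\]
Using $\tfrac12 j(j-1)+j=\tfrac12 j(j+1)$ and the substitution $j\mapsto j-1$ (valid by the first paragraph), the exponent becomes $\tfrac12 j(j-1)\ell m-jdt+(dt+ms)$, so $X^\ell Z^m\xi_{s,t}=\omega^{dt+ms}\xi_{s,t}$. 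The even case is identical via $\tfrac12 j(j-2)+j=\tfrac12 j^2$ and produces the extra factor $\omega^{\frac12\ell m}$, i.e.\ eigenvalue $\omega^{\frac12\ell m+dt+ms}$.

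It remains to see that these vectors exhaust all eigenvectors and to identify the eigenvalue sets. For fixed $s$ the coefficient matrix $\big[\omega^{\frac12 j(j-1)\ell m}(\omega^{-d})^{jt}\big]_{j,t\in\Z_{\go_1}}$ is a nonzero diagonal rescaling of the invertible $\go_1\times\go_1$ Vandermonde matrix in the points $(\omega^{-d})^j$ (distinct since $\omega^{-d}$ is a primitive $\go_1$-th root of unity); hence $\{\xi_{s,t}:t\in\Z_{\go_1}\}$ is a basis of the coordinate subspace spanned by $\{e_{s+j\ell}:j\in\Z_{\go_1}\}$, and ranging over $s$ shows $\{\xi_{s,t}\}_{(s,t)}$ is a basis of $\C^\go$ (similarly for $\eta_{s,t}$). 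So the eigenvalues of $X^\ell Z^m$ are exactly $\{\omega^{dt+ms}\}_{(s,t)}$, resp.\ $\{\omega^{\frac12\ell m+dt+ms}\}_{(s,t)}$. Finally $\gcd(m,d)=1$ (any common divisor of $m$ and $d$ divides $\ell$ and $m$), so a short argument reducing mod $d$ and then mod $\go_1$ shows $(s,t)\mapsto ms+dt$ is a bijection $\Z_d\times\Z_{\go_1}\to\Z_\go$; thus the first set equals $\Omega_\go$, while the second equals $\omega^{\ell m/2}\Omega_\go$, which is $\Omega_\go$ when $\ell m$ is even and $\omega^{1/2}\Omega_\go=\Omega_{2\go}\setminus\Omega_\go$ when $\ell m$ is odd. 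I expect the only real friction to be the bookkeeping of the half-integer exponents --- tracking which quantities lie in $\Z$ versus $\tfrac12\Z$ --- and confirming that the two ad hoc quadratic forms are precisely the ones making both the modular well-definedness and the telescoping reindexing succeed; the transversal count, the Vandermonde invertibility, and the final bijection are all standard.
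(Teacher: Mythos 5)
Your proposal is correct and follows essentially the same route as the paper's proof: checking that the quadratic exponents are well-defined mod $\go_1$ (using the parity of $\go_1$ and $d\mid\ell$), verifying the eigenvector relation by reindexing $j\mapsto j-1$, and identifying the eigenvalue set via $\gcd(d,m)=1$. Your extra Vandermonde argument showing the $\xi_{s,t}$ form a basis is a harmless addition the paper leaves implicit (exhibiting $\go$ distinct eigenvalues of a $\go\times\go$ matrix already determines the spectrum).
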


\begin{proof}
\begin{enumerate}
\item 
We verify that \eqref{eq:eigenvector odd} is well-defined with $\Z_{\go_1}$ understood in the sense of $\mod \go_1$, which will be used later. In fact, for $j'=j+r\go_1$ with $r\in \Z$, 
\begin{align*}
&\frac{1}{2}j'(j'-1)\ell m -j'dt\\
&\qquad=\frac{1}{2}j(j-1)\ell m -jdt+\frac{1}{2}r(2j-1+r\go_1) (\ell/d)m\go-rt\go.
\end{align*}
Since $\go_1$ is odd, $r(2j-1+r\go_1)$ is even no matter $r$ is odd or even.  Thus 
\begin{align*}
\frac{1}{2}j'(j'-1)\ell m -j'dt
\equiv\frac{1}{2}j(j-1)\ell m -jdt\mod \go_1.
\end{align*}
It is clear that 
$$s+j'\ell=s+j\ell +r(\ell/d)\go \equiv s+j\ell \mod \go.$$
This justifies the definition of \eqref{eq:eigenvector odd}.
Therefore, 
\begin{align*}
X^{\ell}Z^{m} \xi_{s,t}
& =\sum_{j\in \Z_{\go_1}}\omega^{\frac{1}{2}j(j-1)\ell m -jdt}\omega^{(s+j\ell)m}e_{s+(j+1)\ell}\\
&=\sum_{j\in \Z_{\go_1}}\omega^{\frac{1}{2}(j-1)(j-2)\ell m -(j-1)dt}\omega^{(s+(j-1)\ell)m}e_{s+j\ell}\\
&=\sum_{j\in \Z_{\go_1}}\omega^{dt+ms}\omega^{\frac{1}{2}j(j-1)\ell m-jdt}e_{s+j\ell}\\
&=\omega^{dt+ms} \xi_{s,t}.
\end{align*}
So for any $(s,t)\in \Z_{d}\times \Z_{\go_1}$, $\omega^{dt+ms}$ is an eigenvalue of $X^{\ell}Z^{m}$ with $\xi_{s,t}$ as an eigenvector. Since $d$ divides $\ell$, and $\gcd (\ell,m)=1$, we have $\gcd(d,m)=1$. Therefore 
\begin{equation}
\{\omega^{dt+ms}: (s,t)\in \Z_{d}\times \Z_{\go_1}\}=\Omega_{\go}.
\end{equation}

\item  We first justify the definition of \eqref{eq:eigenvector even}. If $j'=j+r\go_1$ for some $r\in \Z$, then 
\begin{align*}
&\frac{1}{2}j'(j'-2)\ell m -j'dt\\
&\qquad=\frac{1}{2}j(j-2)\ell m -jdt+(j-1+\frac{1}{2}r\go_1)(\ell/d)rm\go-rt\go.
\end{align*}
Since $\go_1$ is even, we have 
\begin{equation*}
\frac{1}{2}j'(j'-2)\ell m -j'dt\equiv\frac{1}{2}j(j-2)\ell m -jdt\mod \go.
\end{equation*}
Moreover, it is clear that $s+j'\ell \equiv s+j\ell \mod \go$. So  \eqref{eq:eigenvector even} is well-defined. Therefore, 
\begin{align*}
X^{\ell}Z^{m} \eta_{s,t}
& =\sum_{j\in \Z_{\go_1}}\omega^{\frac{1}{2}j(j-2)\ell m -jdt}\omega^{(s+j\ell)m}e_{s+(j+1)\ell}\\
&=\sum_{j\in \Z_{\go_1}}\omega^{\frac{1}{2}(j-1)(j-3)\ell m -(j-1)dt}\omega^{(s+(j-1)\ell)m}e_{s+j\ell}\\
&=\sum_{j\in \Z_{\go_1}}\omega^{\frac{1}{2}\ell m+ dt+ms}\omega^{\frac{1}{2}j(j-1)\ell m-jdt}e_{s+j\ell}\\
&=\omega^{\frac{1}{2}\ell m+dt+ms} \eta_{s,t}.       
\end{align*}
Then the set of eigenvalues of $X^{\ell}Z^m$ is 
\begin{equation}
\left\{\omega^{\frac{1}{2}\ell m+sm+dt}:(s,t)\in \Z_{d}\times \Z_{\go_1}\right\}
=\left\{\omega^{\frac{1}{2}\ell m+k}:k\in \Z_{\go}\right\}
\end{equation}
which is either $\Omega_{\go}$ or $\Omega_{2\go}\setminus \Omega_{\go}$  depending on $\ell m$ is even or not.  \qedhere
\end{enumerate}
\end{proof}


\begin{thebibliography}{DGMSP19}
	
	\bibitem[AA14]{AA}
	Scott Aaronson and Andris Ambainis.
	\newblock The need for structure in quantum speedups.
	\newblock {\em Theory of Computing}, 10(1):133--166, 2014.
	
	\bibitem[AEHK16]{AEHK}
	Ali Asadian, Paul Erker, Marcus Huber, and Claude Kl{\"o}ckl.
	\newblock Heisenberg-{W}eyl observables: {B}loch vectors in phase space.
	\newblock {\em Physical Review A}, 94(1):010301, 2016.
	


    \bibitem[BKSV{\etalchar{+}}23]{BKSVZremez}
 Lars Becker, Ohad Klein, Joseph Slote, Alexander Volberg, and  Haonan Zhang.
\newblock Dimension-free Remez Inequalities and norm designs.
\newblock {\em arXiv: 2310.07926}.

	
	\bibitem[BK08]{BK08GM}
	Reinhold~A Bertlmann and Philipp Krammer.
	\newblock Bloch vectors for qudits.
	\newblock {\em Journal of Physics A: Mathematical and Theoretical},
	41(23):235303, 2008.
	
		\bibitem[Ble01]{Blei}
	Ron Blei.
	\newblock {\em Analysis in integer and fractional dimensions}, volume~71.
	\newblock Cambridge University Press, 2001.
	
	\bibitem[Bou02]{Bou02}
	Jean Bourgain.
	\newblock On the distribution of the {F}ourier spectrum of boolean functions.
	\newblock {\em Israel Journal of Mathematics}, 131(1):269--276, 2002.
	
		\bibitem[BKKKL92]{BGKKL}
Jean Bourgain, Jeff Kahn, Gil Kalai, Yitzhak Katznelson, and Nathan Linial.
\newblock The influence of variables in product spaces.
\newblock {\em Israel Journal of Mathematics}, 77(1-2):55--64, February 1992.

	
	\bibitem[BPSS14]{BPS}
	Fr{\'e}d{\'e}ric Bayart, Daniel Pellegrino, and Juan~B. Seoane-Sep{\'u}lveda.
	\newblock The {B}ohr radius of the n-dimensional polydisk is equivalent to
	{$\sqrt{(\log n)/n}$}.
	\newblock {\em Advances in Mathematics}, 264:726--746, 2014.
	

	\bibitem[Dan05]{Dankert}
	Christoph Dankert.
	\newblock Efficient Simulation of Random Quantum States and Operators.
	\newblock {\em arXiv preprint, arXiv:quant-ph/0512217,} 2005.
	
	\bibitem[DCEL09]{DCEL09}Christoph Dankert, Richard Cleve, Joseph Emerson, and Etera Livine.
\newblock Exact and approximate unitary 2-designs and their application to fidelity estimation.
\newblock {\em Phys. Rev. A} 80(1), 2009.

	
	
	\bibitem[DFKO07]{DFKO07}
	Irit Dinur, Ehud Friedgut, Guy Kindler, and Ryan O'Donnell.
	\newblock On the {F}ourier tails of bounded functions over the discrete cube.
	\newblock {\em Israel Journal of Mathematics}, 160(1):389--412, 2007.
	
	\bibitem[DFOC{\etalchar{+}}11]{DFOOS}
	Andreas Defant, Leonhard Frerick, Joaquim Ortega-Cerd{\`a}, Myriam
	Ouna{\"\i}es, and Kristian Seip.
	\newblock The {B}ohnenblust-{H}ille inequality for homogeneous polynomials is
	hypercontractive.
	\newblock {\em Annals of mathematics}, pages 485--497, 2011.
	
	\bibitem[DGMSP19]{DGMS}
	Andreas Defant, Domingo Garc{\'\i}a, Manuel Maestre, and Pablo Sevilla-Peris.
	\newblock {\em Dirichlet series and holomorphic functions in high dimensions},
	volume~37.
	\newblock Cambridge University Press, 2019.
	
	\bibitem[DMP19]{DMP}
	Andreas Defant, Mieczys{\l}aw Masty{\l}o, and Antonio P{\'e}rez.
	\newblock On the {F}ourier spectrum of functions on {B}oolean cubes.
	\newblock {\em Mathematische Annalen}, 374(1):653--680, 2019.
	
	\bibitem[EI22]{EI22}
	Alexandros Eskenazis and Paata Ivanisvili.
	\newblock Learning low-degree functions from a logarithmic number of random
	queries.
	\newblock In {\em Proceedings of the 54th Annual ACM SIGACT Symposium on Theory
		of Computing}, pages 203--207, 2022.
		
			
	\bibitem[EIS23]{EIS}
		Alexandros Eskenazis, Paata Ivanisvili, and Lauritz Streck.
		\newblock Low-degree learning and the metric entropy of polynomials.
		\newblock  {\em arXiv preprint, arXiv:2203.09659}, 2022
	
\bibitem[GAE07]{GAE}
D.~Gross, K.~Audenaert, and J.~Eisert.
\newblock Evenly distributed unitaries: On the structure of unitary designs.
\newblock {\em Journal of Mathematical Physics}, 48(5):052104, 2007.
  
  \bibitem[GZKC{\etalchar{+}}22]{PhysRevLett.129.160501}
Daniel Gonz\'alez-Cuadra, Torsten~V. Zache, Jose Carrasco, Barbara Kraus, and
  Peter Zoller.
\newblock Hardware efficient quantum simulation of non-abelian gauge theories
  with qudits on {R}ydberg platforms.
\newblock {\em Phys. Rev. Lett.}, 129:160501, Oct 2022.

	
\bibitem[HKP20]{Huang2020}
Hsin-Yuan Huang, Richard Kueng, and John Preskill.
\newblock Predicting many properties of a quantum system from very few
  measurements.
\newblock {\em Nature Physics}, 16(10):1050--1057, June 2020.
	
	\bibitem[HCP22]{HCP22}
	Hsin-Yuan Huang, Sitan Chen, and John Preskill.
	\newblock Learning to predict arbitrary quantum processes.
	\newblock {\em arXiv preprint arXiv:2210.14894}, 2022.
	
	
\bibitem[IRRR{\etalchar{+}}21]{fouriergrowth}
Siddharth Iyer, Anup Rao, Victor Reis, Thomas Rothvoss, and Amir Yehudayoff.
\newblock Tight bounds on the {F}ourier growth of bounded functions on the
  hypercube.
\newblock {\em Electron. Colloquium Comput. Complex.}, {TR21-102}, 2021.
  
\bibitem[KAJL{\etalchar{+}}21]{quant-sim}
Doga~Murat Kurkcuoglu, M.~Sohaib Alam, Joshua~Adam Job, Andy C.~Y. Li,
  Alexandru Macridin, Gabriel~N. Perdue, and Stephen Providence.
\newblock Quantum simulation of $\phi^4$ theories in qudit systems.
\newblock 2021.
  
  \bibitem[KSVZ24]{ksvzITCS}
 Ohad Klein, Joseph Slote, Alexander Volberg, and  Haonan Zhang.
\newblock Quantum and Classical Low-Degree Learning via a Dimension-Free Remez Inequality.
\newblock {\em 15th Innovations in Theoretical Computer Science Conference (ITCS 2024)}.
 

  
  \bibitem[LMN93]{LMN}
Nathan Linial, Yishay Mansour, and Noam Nisan.
\newblock Constant depth circuits, fourier transform, and learnability.
\newblock {\em J. ACM}, 40(3):607–620, jul 1993.
  
  \bibitem[O'Do14]{Odonnell}
Ryan O'Donnell.
\newblock {\em Analysis of Boolean Functions}.
\newblock Cambridge University Press, 2014.
  
  \bibitem[Ver18]{vershynin}
  Roman Vershynin.
  \newblock High-dimensional probability: An introduction with applications in data science.
  \newblock {\em Cambridge university press}, 47, 2018.
  
  
  \bibitem[WHSK20]{quditsurvey}
Yuchen Wang, Zixuan Hu, Barry~C. Sanders, and Sabre Kais.
\newblock Qudits and high-dimensional quantum computing.
\newblock {\em Frontiers in Physics}, 8, 2020.

			\bibitem[RWZ24]{RWZ24}Cambyse Rouzé, Melchior Wirth, Haonan Zhang.
		\newblock Quantum Talagrand, KKL and Friedgut's theorems and the learnability of quantum Boolean functions.
		\newblock {\em Commun. Math. Phys.} 405, 95 (2024).
		
		\bibitem[S06]{Scott_2006}Andrew J. Scott.
		\newblock Tight informationally complete quantum measurements.
		\newblock {\em J. Phys. A} 39, 45 (2006).

			
		\bibitem[SVZ23a]{SVZ} Joseph Slote, Alexander Volberg, and Haonan Zhang. 
		\newblock Noncommutative Bohnenblust--Hille inequality in the Heisenberg--Weyl and Gell-Mann bases with applications to fast learning. 
		\newblock arXiv: 2301.01438, v2.
		
		\bibitem[SVZ23b]{SVZgmp} Joseph Slote, Alexander Volberg, and Haonan Zhang. 
		\newblock A dimension free discrete Remez inequality on multi-tori.
		\newblock arXiv:2305.10828.
		
		
		\bibitem[VZ23]{VZ23} Alexander Volberg and Haonan Zhang.
		\newblock Noncommutative Bohnenblust--Hille inequality.
		\newblock {\em Math. Ann.}, 1-20, 2023.
\end{thebibliography}
\end{document}